\def\a{\alpha}
\def\b{\beta}
\renewcommand{\epsilon}{\ve}
\def\ve{\varepsilon}
\def\th{\theta}
\def\rad{r}
\def\dh{d_H}
\def\tmix{t_{\text{mix}}}
\newcommand{\E}{\mbox{\bf E}}
\newcommand{\Var}{\mbox{\bf Var}}
\newcommand{\pr}[2][]{\mbox{Pr}\ifthenelse{\not\equal{}{#1}}{_{#1}}{}\!\left[#2\right]}
\newcommand{\reals}{\mathbb{R}}
\newcommand{\inftynorm}[1]{\left\lVert #1 \right\rVert_{\infty}}
\newcommand{\dtv}{d_{\mathrm {TV}}}
\newcommand{\abss}[1]{\left\lvert {#1} \right\rvert}
\newtheorem{theorem}{Theorem}
\newtheorem{proposition}{Proposition}
\newtheorem{remark}{Remark}
\newtheorem{lemma}{Lemma}
\newtheorem{claim}{Claim}
\newtheorem{corollary}{Corollary}
\newtheorem{definition}{Definition}
\newcommand{\ignore}[1]{}
\providecommand{\poly}{\operatorname*{poly}}
\newcommand{\bg}[1]{\medskip\noindent{\bf #1}}
\definecolor{Red}{rgb}{1,0,0}
\newcommand{\oldbound}[1]{{}}
\newcommand{\dm}{d_{\max}}
\title{Concentration of Multilinear Functions of the Ising Model with Applications to Network Data}
\author {
Constantinos Daskalakis\thanks{Supported by NSF CCF-1617730, CCF-1650733, and ONR N00014-12-1-0999.}\\
EECS \& CSAIL, MIT\\
\tt{costis@csail.mit.edu}
\and
Nishanth Dikkala\thanks{Supported by NSF CCF-1617730, CCF-1650733, and ONR N00014-12-1-0999.} \\
EECS \& CSAIL, MIT\\
\tt{nishanthd@csail.mit.edu}
\and
Gautam Kamath\thanks{Supported by NSF CCF-1617730, CCF-1650733, and ONR N00014-12-1-0999. Part of this work was done while the author was an intern at Microsoft Research New England.} \\
EECS \& CSAIL, MIT\\
\tt{g@csail.mit.edu}
}
\begin{document}
\maketitle

  \begin{abstract}
We prove near-tight concentration of measure for polynomial functions of the Ising model under high temperature. For any degree $d$, we show that a degree-$d$ polynomial of a $n$-spin Ising model exhibits exponential tails that scale as $\exp(-r^{2/d})$ at radius $r=\tilde{\Omega}_d(n^{d/2})$. Our concentration radius is optimal up to logarithmic factors for constant $d$, improving known results by polynomial factors in the number of spins. We demonstrate the efficacy of polynomial functions as statistics for testing the strength of interactions in social networks in both synthetic and real world data.
\end{abstract}

  \section{Introduction} \label{sec:intro}

The {\em Ising model} is a fundamental probability distribution defined in terms of a graph $G=(V,E)$ whose nodes and edges are associated with scalar parameters $(\theta_{v})_{v\in V}$ and $(\theta_{u,v})_{\{u,v\}\in E}$ respectively. The distribution samples a vector $x \in \{\pm 1\}^V$ with probability:
\begin{align}
p(x) = {\rm exp} \left(\sum_{v\in V} \theta_v x_v + \sum_{(u,v) \in E} \theta_{u,v} x_u x_v - \Phi\left(\vec \th\right) \right), \label{eq:ising model}
\end{align}  
where $\Phi\left(\vec \th\right)$ serves to provide normalization. Roughly speaking, there is a random variable $X_v$ at every node of $G$, and this
variable may be in one of two states, or spins: up ($+1$) or down ($-1$). The scalar parameter $\theta_v$ models a local
field at node $v$. The sign of $\theta_v$ represents whether this local field favors $X_v$ taking the value $+1$, i.e. the
up spin, when $\theta_v> 0$, or the value $-1$, i.e. the down spin, when $\theta_v < 0$, and its magnitude represents the
strength of the local field. Similarly, $\theta_{u,v}$ represents the direct interaction between nodes $u$ and $v$. Its sign
represents whether it favors equal spins, when $\theta_{u,v} > 0$, or opposite spins, when $\theta_{u,v} < 0$, and its magnitude
corresponds to the strength of the direct interaction. Of course, depending on the structure of $G$ and the node and edge parameters, there may be indirect interactions between nodes, which may overwhelm
local fields or direct interactions.

Many popular models, for example, the usual ferromagnetic Ising model~\cite{Ising25,Onsager44}, the Sherrington-Kirkpatrick mean field model~\cite{SherringtonK75} of spin glasses, and the Hopfield model~\cite{Hopfield82} of neural networks, the Curie-Weiss model~\cite{DeserCG68} all belong to the above family of distributions, with various special structures on $G$, the $\theta_{u,v}$'s and the $\theta_v$'s. Since its introduction in Statistical Physics, the Ising model has found a myriad of applications in diverse research disciplines, including probability theory, Markov chain Monte Carlo, computer vision, theoretical computer science, social network analysis, game theory, computational biology, and neuroscience; see~e.g.~\cite{LevinPW09, Chatterjee05, Felsenstein04, DaskalakisMR11, GemanG86, Ellison93, MontanariS10} and their references. The ubiquity
of these applications motivate the problem of inferring Ising models from samples, or inferring statistical properties of Ising models from samples. This type of problem has enjoyed much study in statistics, machine learning, and information theory; see, e.g.,~\cite{ChowL68,AbbeelKN06,CsiszarT06,Chatterjee07,RavikumarWL10,JalaliJR11,SanthanamW12,BreslerGS14,Bresler15,VuffrayMLC16,BreslerK16,Bhattacharya16,BhattacharyaM16,MartindelCampoCU16,KlivansM17,HamiltonKM17,DaskalakisDK18}.

Despite the wealth of theoretical study and practical applications of this model, outlined above, there are still aspects of it that are poorly understood. In this work, we focus on the important topic of concentration of measure. We are interested in studying the concentration properties of polynomial functions $f\left({X}\right)$ of the Ising model. That is, for a random vector $X$ sampled from $p$ as above and a polynomial $f$, we are interested in the concentration of $f(X)$ around its expectation $\E[f(X)]$. Since the coordinates of $X$ take values in $\{\pm 1\}$, we can without loss of generality focus our attention to multi-linear functions $f$ (Definition \ref{def:multilinear}).

While the theory of concentration inequalities for functions of independent random variables has reached a high level of sophistication, proving concentration of measure for functions of dependent random variables is significantly harder, the main tools being martingale methods, logarithmic Sobolev inequalities and transportation cost inequalities. One shortcoming of the latter methods is that explicit constants are very hard or almost impossible to get. For the Ising model, in particular, the log-Sobolev inequalities of Stroock and Zegarlinski~\cite{StroockZ92}, known under high temperature,\footnote{High temperature is a widely studied regime of the Ising model where it enjoys a number of useful properties such as decay of correlations and fast mixing of the Glauber dynamics. Throughout this paper we will take ``high temperature'' to mean that Dobrushin's conditions of weak dependence are satisfied. See Definition~\ref{def:dobrushin}.} do not give explicit constants, and it is also not clear whether they extend to systems beyond the lattice. 

An alternative approach, proposed recently by Chatterjee~\cite{Chatterjee05}, is an adaptation to the Ising model of Stein's method of exchangeable pairs.
This powerful method is well-known in probability theory, and has been used to derive concentration inequalities with explicit constants for functions of dependent random variables (see \cite{MackeyJCFT14, DaskalakisDK18} for some recent works). 
Chatterjee uses this technique to establish concentration inequalities for Lipschitz functions of the Ising model under high temperature. While these inequalities are tight (and provide Gaussian tails) for linear functions of the Ising model, they are unfortunately not tight for higher degree polynomials, in that the concentration radius is off by factors that depend on the dimension $n=|V|$. For example, consider the function $f_c(X)=\sum_{i \neq j}c_{ij} X_i X_j$ of an Ising model without external fields, where the $c_{ij}$'s are signs. Chatterjee's results imply that this function concentrates at radius $\pm O(n^{1.5})$, but as we show this is suboptimal by a factor of $\tilde{\Omega}(\sqrt{n})$. 

In particular, our main technical contribution is to obtain near-tight concentration inequalities for polynomial functions of the Ising model, whose concentration radii are tight up to logarithmic factors. 
 A corollary of our main result (Theorem~\ref{thm:multilinear}) is as follows:
\begin{theorem}
\label{thm:informal}
Consider any degree-$d$ multilinear function $f$ with coefficients in $[-1, 1]$, defined on an Ising model $p$ without external field in the high-temperature regime.
Then there exists a constant $C = C(d) > 0$ (depending only on $d$) such that for any $r = \tilde \Omega_d(n^{d/2})$, we have
$$\Pr_{X \sim p} [|f(X) - \E[f(X)]| > r] \leq \exp\left(-C\cdot \frac{r^{2/d}}{n \log n}\right).$$
The concentration radius is tight up to logarithmic factors, and the tail bound is tight up to a $O_d(1/\log n)$ factor in the exponent of the tail bound.
\end{theorem}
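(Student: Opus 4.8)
The plan is to prove the general multilinear concentration bound (Theorem~\ref{thm:multilinear}) and then read off this statement by specializing to coefficients in $[-1,1]$, for which the relevant norms of the coefficient tensor are all $O_d(1)$ and the radius threshold collapses to $\tilde\Omega_d(n^{d/2})$. Throughout I center $f$ so that $\E[f(X)] = 0$, and I use the method of exchangeable pairs adapted to the Ising model. The exchangeable pair $(X, X')$ is generated by one step of single-site Glauber dynamics: draw $X \sim p$, pick a uniformly random coordinate $I \in [n]$, and resample $X_I$ from its conditional law given $X_{-I}$; since $p$ is reversible and stationary for this chain, $(X,X')$ is exchangeable. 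For any centered $h$ I take the antisymmetric function $F_h(x,x') = g_h(x) - g_h(x')$, where $g_h = \sum_{t\ge 0} P^t h$ solves the Poisson equation $(I-P)g_h = h$; under Dobrushin's condition the series converges geometrically at rate governed by the mixing time $\tmix = O(n\log n)$, and one checks $\E[F_h(X,X') \mid X] = h(X)$.

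The engine is the moment form of the exchangeable-pairs identity. Using $\E[F_h(X,X')\mid X] = h(X)$, antisymmetry, and exchangeability with the test function $f^{2q-1}$, one gets $\E[f^{2q}] = \tfrac12 \E[F(X,X')(f(X)^{2q-1} - f(X')^{2q-1})]$. Bounding the power difference and noting that both $f(X)-f(X')$ and $g(X)-g(X')$ are single-site differences $(x_I - X_I')\,D_I f$ and $(x_I - X_I')\,D_I g$, where $D_i$ is the discrete derivative in coordinate $i$, yields a clean recursion $\|f\|_{2q}^2 \lesssim q\,\|v\|_q$ with variance proxy $v(X) = \frac1n\sum_i |D_i f(X)|\,|D_i g(X)|$. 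Thus the $2q$-norm of the degree-$d$ function is controlled by the $q$-norm of the variance proxy, which is built from the degree-$(d-1)$ derivatives of $f$ and of the Poisson solution $g$.

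This sets up an induction on the degree $d$. The base case $d=1$ is a Lipschitz function, for which the identity gives sub-Gaussian tails at scale $\sqrt n$. For the inductive step, $D_i f$ is a degree-$(d-1)$ multilinear polynomial and, via the Poisson expansion together with Dobrushin decay of correlations, $D_i g$ is likewise controlled by degree-$(d-1)$ quantities up to $\poly\log n$ mixing factors; hence bounding $\|v\|_q$ reduces, after Cauchy--Schwarz and a weak-dependence concentration of the sum over $i$, to moments of degree-$\le (d-1)$ functions already handled by the inductive hypothesis. Each use of the identity contributes a factor $\sqrt q$, and compounding across the $d$ levels gives $\|f\|_{2q} \lesssim q^{d/2}\cdot \tilde O(n^{d/2})$. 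Markov's inequality then converts the $q^{d/2}$ moment growth into the tail $\exp(-C\,r^{2/d}/(n\log n))$ at radius $\tilde\Omega(n^{d/2})$. Tightness (up to logarithmic factors) follows by comparison: at sufficiently high temperature $p$ is close to a product measure, for which Lata\l{}a-type lower bounds give matching $\exp(-\Theta(r^{2/d}))$ tails at scale $n^{d/2}$.

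The \emph{main obstacle} is the behavior of the Poisson solution $g$ and the associated nonlinearity. Unlike the independent case, the single-step conditional expectation of a spin is $\E[X_i \mid X_{-i}] = \tanh(\sum_j \theta_{ij} X_j)$, which is not polynomial, so $Pf$ and hence $g$ are not exactly degree-$d$ objects and the degree recursion does not close on the nose. The heart of the argument is therefore to show that, under Dobrushin high temperature, these $\tanh$-nonlinearities and the accumulated mixing error distort the effective coefficients of the relevant derivatives by only $\poly\log n$ factors, so that $\|v\|_q$ genuinely reduces to degree-$(d-1)$ moments; a second delicate point is establishing the weak-dependence concentration of the $n$-term sum defining $v$. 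Keeping the constant $C(d)$ and the $n\log n$ denominator sharp — rather than settling for the $n^{3/2}$-type radius that a crude Lipschitz bound would give — is exactly where the real work lies.
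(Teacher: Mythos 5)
Your high-level route is genuinely different from the paper's. The paper runs a Doob martingale along the Glauber dynamics started at stationarity, controls its increments up to a stopping time defined via ``good sets'' (configurations from which the conditional expectations of the lower-degree functions $f_a^v$ are small and concentrate), and applies Freedman's inequality, with an induction on degree supported by FK-representation bounds on marginals (Corollary~\ref{cor:marginal-bound}) and a supplementary theorem for \emph{hybrid} functions of two coupled runs (Theorem~\ref{thm:hybrid-supplement-induction}). You instead propose the moment form of exchangeable pairs with the Poisson-equation solution $g=\sum_{t\ge 0}P^t f$, the recursion $\|f\|_{2q}^2\lesssim q\|v\|_q$, and Markov's inequality; this is essentially the route of the simultaneous Gheissari--Lubetzky--Peres work cited in the paper, and in principle it can be made to work (it would even remove the $\log n$ from the exponent). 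The engine itself --- exchangeability, antisymmetry, the identity $\E[F(X,X')\mid X]=f(X)$, and the H\"older step giving $\|f\|_{2q}^2\lesssim q\|v\|_q$ --- is sound.

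The gap is exactly at the step you call the main obstacle, and the resolution you assert is not available. Your inductive hypothesis supplies moments of degree-$(d-1)$ \emph{multilinear} functions of a \emph{stationary} sample, but the quantity you must control, $D_i g(X)=\sum_{t\ge 0}D_iP^tf(X)$, is not such an object: unrolling $D_iP^tf$ via the greedy coupling produces conditional expectations of \emph{hybrid} degree-$(d-1)$ functions evaluated on two coupled, non-stationary runs, weighted by the coupling's Hamming discrepancy. Moment bounds for stationary degree-$(d-1)$ multilinear functions do not imply $L^{2q}(p)$ bounds for these conditional-expectation objects; one needs a substantially enriched induction --- which is precisely what the paper builds (Statement~\ref{stmt:freedman-conc} and Statement~\ref{stmt:conc-of-cond-expectation} of Theorem~\ref{thm:multilinear-induction} are tail and boundedness statements \emph{conditional on arbitrary starting states}, and Theorem~\ref{thm:hybrid-supplement-induction} handles the hybrid terms, with Lemma~\ref{lem:hamming-concentration} providing Hamming-distance concentration). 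So the sentence claiming that $\|v\|_q$ ``reduces to moments of degree-$\le(d-1)$ functions already handled by the inductive hypothesis'' does not close the induction as stated.

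There is also a quantitative trap in combining the Dobrushin contraction with moments, which the proposal does not address. If one bounds $\|D_iP^tf\|_{2q}$ by pushing the absolute value inside with Jensen, the decay $(1-\eta/n)^t$ survives only as $(1-\eta/n)^{t/2q}$; summing over $t$ then costs an extra factor of $q$, degrading $\|f\|_{2q}\lesssim q^{d/2}\,\tilde O(n^{d/2})$ to $q^{(d+1)/2}\,\tilde O(n^{d/2})$ and hence the tail from $\exp(-c\,r^{2/d}/n\log n)$ to $\exp(-c\,r^{2/(d+1)}/n\log n)$, which is strictly weaker than the theorem. Avoiding this requires splitting via Cauchy--Schwarz against the Hamming indicator and invoking moment bounds for the Hamming distance of coupled runs --- again machinery of the type the paper develops rather than a direct appeal to the degree-$(d-1)$ hypothesis. (A minor point: for tightness you do not need ``high temperature is close to product''; the empty graph is itself a high-temperature Ising model, and the paper's Stirling/Chernoff lower bound on it, as in Section~\ref{sec:bilinear-tightness} and Remark~\ref{rem:multilinear-tight}, suffices.)
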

Our formal theorem statements for bilinear and higher degree multilinear functions appear as Theorems~\ref{thm:bilinear} and~\ref{thm:multilinear} of Sections~\ref{sec:bilinear} and~\ref{sec:multilinear}, respectively. Some further discussion of our results is in order:
\begin{itemize}
\item Under existence of external fields, it is easy to see that the above concentration does not hold, even for bilinear functions, as observed in Section~\ref{sec:bilinear-concentration-external-field}. Motivated by our applications in Section~\ref{sec:experiments} we extend the above concentration of measure result to centered bilinear functions (where each variable $X_i$ appears as $X_i-\E[X_i]$ in the function) that also holds under arbitrary external fields; see Theorem~\ref{thm:bilinear-concentration-external}. We leave extensions of this result to higher degree multinear functions to the next version of this paper.

\item Moreover, notice that the tails for degree-$2$ functions are exponential and not Gaussian, and this is unavoidable, and that as the degree grows the tails become heavier exponentials, and this is also unavoidable. In particular, the tightness of our bound is justified in Section~\ref{sec:bilinear-tightness} and Remark~\ref{rem:multilinear-tight}. 

\item Lastly, like Chatterjee and Stroock and Zegarlinski, we prove our results under high temperature. On the other hand, it is easy to construct low temperature Ising models where no non-trivial concentration holds.\footnote{Consider an Ising model with no external fields, comprising  two disjoint cliques of half the vertices with infinitely strong bonds; i.e.~$\theta_v=0$ for all $v$, and $\theta_{u,v}=\infty$ if $u$ and $v$ belong to the same clique. Now consider the multilinear function $f(X)=\sum_{u \not\sim v}X_u X_v$, wher $u \not\sim v$ denotes that $u$ and $v$ are not neighbors (i.e.~belong to different cliques). It is easy to see that the maximum absolute value of $f(X)$ is $\Omega(n^2)$ and that there is no concentration at radius better than some $\Omega(n^2)$.}
\end{itemize}

With our theoretical understanding in hand, we proceed with an experimental evaluation of the efficacy of multilinear functions applied to hypothesis testing.
Specifically, given a binary vector, we attempt to determine whether or not it was generated by an Ising model.
Our focus is on testing whether choices in social networks can be approximated as an Ising model, a common and classical assumption in the social sciences~\cite{Ellison93, MontanariS10}.
We apply our method to both synthetic and real-world data.
On synthetic data, we investigate when our statistics are successful in detecting departures from the Ising model.
For our real-world data study, we analyze the Last.fm dataset from HetRec'11~\cite{CantadorBK11}.
Interestingly, when considering musical preferences on a social network, we find that the Ising model may be more or less appropriate depending on the genre of music.

\subsection{Related Work}
As mentioned before, Chatterjee previously used the method of exchangeable pairs to prove variance and concentration bounds for linear statistics of the Ising model~\cite{Chatterjee05}.
In~\cite{DaskalakisDK18}, the authors apply and extend this method to prove variance bounds for bilinear statistics.
The present work improves upon this by proving concentration rather than bounding the variance, as well as considering general degrees $d$ rather than just $d=2$.
In simultaneous work\footnote{The present work was under submission to NIPS 2017 from May 19 to September 4, 2017, and thus not made public until after being accepted.}, Gheissari, Lubetzky, and Peres proved concentration bounds which are qualitatively similar to ours, though the techniques are somewhat different\cite{GheissariLP17}.

\subsection{Organization}
In Section~\ref{sec:preliminaries}, we define the notation we use in this paper.
We describe and prove our results for concentration of bilinear functions in Section~\ref{sec:bilinear}.
This method serves as a blueprint for our main result, concentration of higher-order multilinear functions, which we show in Section~\ref{sec:multilinear}.
In Section \ref{sec:experiments}, we describe our experimental investigation and results.

  \section{Preliminaries}
\label{sec:preliminaries}

We will abuse notation, referring to both the probability distribution $p$ and the random vector $X$ that it samples in $\{\pm 1\}^V$ as the Ising model. 
That is, $X \sim p$.
We will subscript $X$ as follows.
At times, we will consider a sequence of $X$'s at various ``time steps'' -- we will use $X_t$ or $X_i$ to denote random vectors in this sequence.
Other times, we will need to consider the value of the vector $X$ at a particular node -- we will use $X_u$ or $X_v$ to indicate random variables in this sequence.
Whether we index based on time step versus node should be apparent from the choice of subscript variable, and otherwise clear from context.
Occationally, we will use both: $X_{t,u}$ denotes the variable corresponding to node $u$ in the Ising model $X$ at some time step $t$.
Throughout the paper we will refer to the set $\Omega = \{\pm 1\}^V$.

\begin{definition}
	\label{def:multilinear}
	A \emph{degree-$d$ multilinear function} defined on $n$ variables $x_1, \dots, x_n$ is a polynomial such that
	$$\sum_{S \subseteq [n] : |S| \leq d} a_{S} \prod_{i \in S} x_i,$$
  where $a : 2^{[n]} \rightarrow \mathbb{R}$ is a coefficient vector.
\end{definition}
When the degree $d=1$, we will refer to the function as a linear function, and when the degree $d=2$ we will call it a bilinear function. 
Note that since $X_u \in \{\pm 1\}$, any polynomial function of an Ising model is a multilinear function.
We will use $a$ to denote the coefficient vector of such a multilinear function.
Note that we will use permutations of the subscripts to refer to the same coefficient, i.e., $a_{uv}$ is the same as $a_{vu}$. Also we will use the term $d$-linear function to refer to a multilinear function of degree $d$.

We say an Ising model has \emph{no external field} if $\th_v = 0$ for all $v \in V$.
An Ising model is \emph{ferromagnetic} if $\th_e \geq 0$ for all $e \in E$.

We now give a formal definition of the high-temperature regime, also known as Dobrushin's uniqueness condition -- in this paper, we will use the terms interchangeably.
\begin{definition}[Dobrushin's Uniqueness Condition]
\label{def:dobrushin}
Consider an Ising model $p$ defined on a graph $G=(V,E)$ with $|V|=n$ and parameter vector $\vec{\th}$.
Suppose $\max_{v \in V} \sum_{u \ne v} \tanh\left(\abss{\th_{uv}}\right) \le  1-\eta$ for some $\eta > 0$. Then $p$ is said to satisfy Dobrushin's uniqueness condition, or be in the high temperature regime. 
In this paper, we use the notation that an Ising model is $\eta$-high temperature to parameterize the extent to which it is inside the high temperature regime.
Note that since $\tanh(|x|) \le |x|$ for all $x$, the above condition follows from more simplified conditions which avoid having to deal with hyperbolic functions. 
For instance, either of the following two conditions:
\begin{align*}
\max_{v \in V} \sum_{u \ne v} \abss{\th_{uv}} &\le  1-\eta \text{ or}\\
\b \dm &\le  1-\eta
\end{align*}
are sufficient to imply Dobrushin's condition (where $\b = \max_{u,v} \abss{\th_{uv}}$ and $\dm$ is the maximum degree of $G$).
\end{definition}

In some situations, we may use the parameter $\eta$ implicitly and simply say the Ising model is in the high temperature regime.
In general, when one refers to the \emph{temperature} of an Ising model, a high temperature corresponds to small $\theta_{uv}$ values, and a low temperature corresponds to large $\theta_{uv}$ values.

We will use the following lemma which shows concentration of measure for Lipschitz functions on the Ising model in high temperature. 
It is a well-known result and can be found for instance as Theorem 4.3 of~\cite{Chatterjee05}.
\begin{lemma}[Lipschitz Concentration Lemma]
	\label{lem:lipschitz-lemma}
	Suppose that $f(X_1,\ldots,X_n)$ is a function of an Ising model in the high-temperature regime. 
	Suppose the Lipschitz constants of $f$ are $l_1, l_2, \ldots, l_n$ respectively. That is,
	$$\abss{f(X_1,\ldots,X_i,\ldots,X_n) - f(X_1,\ldots,X_i',\ldots,X_n)} \le l_i$$
	for all values of $X_1,\ldots,X_{i-1},X_{i+1},\ldots,X_n$ and for any $X_i$ and $X_i'$. Then,
	$$\Pr\left[ \abss{f(X)-\E[f(X)]} > t\right] \le 2\exp\left(-\frac{\eta t^2}{2\sum_{i=1}^n l_i^2} \right).$$
\end{lemma}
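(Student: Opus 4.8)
The plan is to reproduce Chatterjee's method of exchangeable pairs, which is what yields concentration with the explicit constant $\eta$. First I would build an exchangeable pair $(X,X')$ from the single-site Glauber dynamics: draw $X\sim p$, pick a coordinate $I\in[n]$ uniformly at random, and obtain $X'$ by resampling $X_I$ from its conditional law given $X_{-I}$, leaving every other coordinate fixed. Since $p$ is the (reversible) stationary distribution of this chain, $(X,X')$ is exchangeable, and crucially $X$ and $X'$ differ in at most the single coordinate $I$, so that all differences $f(X)-f(X')$ are single-coordinate changes governed by the Lipschitz constants $l_i$.

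Let $P$ denote the one-step transition operator of this chain, so $Pg(x)=\E[g(X')\mid X=x]$, and write $\bar f = f-\E[f(X)]$. Under high temperature the chain is ergodic, so the Poisson equation $g-Pg=\bar f$ has the solution $g=\sum_{k\ge 0}P^k\bar f$, and I would take the antisymmetric kernel $F(x,x')=g(x)-g(x')$. A one-line computation gives $\E[F(X,X')\mid X]=g(X)-Pg(X)=\bar f(X)$, so $(F,\bar f)$ form a Stein pair. Chatterjee's abstract concentration theorem then reduces everything to bounding the variance proxy
$$v(X)=\tfrac12\,\E\!\left[\,|F(X,X')|\cdot|\bar f(X)-\bar f(X')|\ \middle|\ X\right];$$
namely, if $v(X)\le C$ almost surely then $\Pr[|f(X)-\E f(X)|>t]\le 2\exp(-t^2/(2C))$, so it suffices to establish $C=\tfrac1\eta\sum_i l_i^2$.

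Conditioning on the resampled coordinate $I=i$ (probability $1/n$), the factor $|\bar f(X)-\bar f(X')|=|f(X)-f(X^{(i)})|$ is at most $l_i$ by the Lipschitz hypothesis, so $v(X)\le \tfrac{1}{2n}\sum_i l_i\,L_i$, where $L_i$ denotes the coordinate-$i$ Lipschitz constant of the Poisson solution $g$. The crux is to control these $L_i$: I would show, via a coupling of two Glauber chains started from configurations differing in a single coordinate, that Dobrushin's condition $\max_v\sum_{u\ne v}\tanh(|\theta_{uv}|)\le 1-\eta$ forces the expected Hamming distance to contract by a factor $(1-\eta/n)$ at each step. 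Consequently the coordinate-wise Lipschitz constants of $P^k\bar f$ decay like $(1-\eta/n)^k$, and summing the geometric series for $g=\sum_k P^k\bar f$ supplies a factor $\sum_k(1-\eta/n)^k=n/\eta$ that cancels the $1/n$ above, leaving $v(X)=O\!\big(\tfrac1\eta\sum_i l_i^2\big)\le C$ once the coupling rate is tracked precisely.

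The main obstacle is exactly this contraction estimate: tracking how single-site resampling acts on the entire vector of coordinate-wise Lipschitz constants, rather than on a single scalar, requires setting up the Dobrushin influence matrix and a path-coupling argument, and it is here, and essentially only here, that the hyperbolic-tangent form of the high-temperature condition and the parameter $\eta$ enter quantitatively. The remaining ingredients — exchangeability of the Glauber pair, the Stein-pair identity $\E[F\mid X]=\bar f$, and Chatterjee's reduction to the variance proxy $v(X)$ — are routine, and the two-sided tail bound follows by applying the one-sided estimate to both $f$ and $-f$.
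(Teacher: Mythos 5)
Your proposal is correct and is essentially the paper's own route: the paper does not prove this lemma but cites it as Theorem 4.3 of Chatterjee's thesis, whose proof is precisely the exchangeable-pair construction you describe (Glauber resampling, the Poisson-equation kernel $F(x,x')=g(x)-g(x')$ with $g=\sum_k P^k \bar f$, and the reduction to the variance proxy $v(X)$). You also correctly identify the one genuinely delicate point — that the contraction must be tracked through the Dobrushin influence matrix acting on the whole vector of Lipschitz constants, not a single scalar rate, which combined with symmetry of the influence matrix yields $v(X)\le \frac{1}{\eta}\sum_i l_i^2$ — so nothing essential is missing.
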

Note that this immediately implies sharp concentration bounds for linear functions on the Ising model.

We will refer to elements in $\Omega$ as both states and configurations of the Ising model. The name states will be more natural when considering Markov chains such as the Glauber dynamics. Glauber dynamics is the canonical Markov chain for sampling from an Ising model. 
Glauber dynamics define a reversible, ergodic Markov chain whose stationary distribution is identical to the corresponding Ising model. In many relevant settings, including the high-temperature regime, the dynamics are rapidly mixing (i.e., in $O(n\log n)$ steps) and hence offer an efficient way to sample from Ising models. We consider the basic variant known as single-site Glauber dynamics. The dynamics are a Markov chain defined on the set $\Omega$. 
They proceed as follows:
\begin{enumerate}
	\item Let $X_t$ denote the state of the dynamics at time $t$. Start at any state $X_0 \in \Omega$. 
	\item Let $N(u)$ be the set of neighbors of node $u$. Pick a node $u$ uniformly at random and update $X_u$ as follows
	\begin{align*}
	&~~ X_{t+1,u} = 1 \quad \text{w.p. } \quad \frac{\exp\left(\th_u + \sum_{v \in N(u)} \th_{uv}X_{t,v} \right)}{\exp\left(\th_u + \sum_{v \in N(u)} \th_{uv}X_{t,v} \right) + \exp\left(-\th_u-\sum_{v \in N(u)} \th_{uv}X_{t,v} \right)} \\
	&~~ X_{t+1,u} = -1 \quad \text{w.p. } \quad \frac{\exp\left(-\th_u-\sum_{v \in N(u)} \th_{uv}X_{t,v} \right)}{\exp\left(\th_u+ \sum_{v \in N(u)} \th_{uv}X_{t,v} \right) + \exp\left(-\th_u-\sum_{v \in N(u)} \th_{uv}X_{t,v} \right)}
	\end{align*}
\end{enumerate}

Glauber dynamics for an Ising model in the high temperature regime are fast mixing. In particular, they mix in $O(n\log n)$ steps. To be more concrete, for an Ising model $p$ in $\eta$-high temperature, we define
\begin{align}
\tmix = \frac{n\log n}{\eta}, \label{eq:tmix}
\end{align}
The dynamics for an Ising model in high temperature also display the cutoff phenomenon. Due to this, we have Lemma \ref{lem:close-to-stationary}.
\begin{lemma}
	\label{lem:close-to-stationary}
	Let $x_0$ be any starting state for the Glauber dynamics and let $t^* = (\zeta+2)\tmix$ for some $0 \le d \le n$. If $X_{t^*,x_0}$ is the state reached after $t^*$ steps of the dynamics, then
	$$\dtv(X_{t^*,x_0}, p) \le \exp\left( - (\zeta+1)n\log n \right)$$ 
	for all $x_0$.
\end{lemma}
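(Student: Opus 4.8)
The plan is to bound the total variation distance by a coupling argument, using Dobrushin's condition to obtain one-step contraction of the Hamming distance. Concretely, I would run a grand coupling of the single-site Glauber dynamics: one copy $X_t$ started from the arbitrary state $x_0$, and a second copy $Y_t$ started from stationarity $Y_0 \sim p$, updating both at each step with the same uniformly chosen site and the optimal (monotone) coupling of the two conditional update distributions. Since $Y_t \sim p$ for every $t$, the coupling inequality gives $\dtv(X_{t^*,x_0}, p) \le \Pr[X_{t^*} \ne Y_{t^*}] \le \E[\dh(X_{t^*}, Y_{t^*})]$, so it suffices to show that the expected Hamming distance after $t^*$ steps is tiny, uniformly over $x_0$.

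The key step is the per-step contraction, and this is the only place the high-temperature hypothesis enters. Conditioning on two configurations that differ in a single coordinate $v$, I would analyze which site is selected for update: selecting $v$ (probability $1/n$) couples that coordinate and decreases the distance by one; selecting a neighbor $u$ of $v$ lets the disagreement propagate with probability at most $\tanh\abss{\th_{uv}}$; selecting a non-neighbor leaves the distance unchanged. Summing, the expected change is at most $-\frac{1}{n}\bigl(1 - \max_v \sum_{u \ne v}\tanh\abss{\th_{uv}}\bigr) \le -\eta/n$ under Dobrushin's condition (Definition~\ref{def:dobrushin}). The path-coupling theorem then extends this from adjacent configurations to all pairs, yielding $\E[\dh(X_{t}, Y_{t})] \le (1-\eta/n)^{t}\,\dh(X_0,Y_0) \le n\,(1-\eta/n)^{t} \le n\,e^{-\eta t/n}$ for every $t$.

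Substituting $t = t^* = (\zeta+2)\tmix = (\zeta+2)\,n\log n/\eta$ makes $\eta t^*/n = (\zeta+2)\log n$, giving $\dtv(X_{t^*,x_0},p) \le n\,e^{-(\zeta+2)\log n} = e^{-(\zeta+1)\log n}$, uniformly in $x_0$. I should flag that this argument produces the exponent $(\zeta+1)\log n$, not the $(\zeta+1)\,n\log n$ printed in the statement; the extra factor of $n$ appears to be a typo, and the genuine obstacle is reconciling it rather than any missing technique. Two sanity checks support this. First, the Glauber mixing scale is $\tmix = \Theta(n\log n)$ (the stated $\eta t^*/n = (\zeta+2)\log n$ already saturates the contraction), so a constant number of mixing times can only buy $\dtv \approx n^{-\Theta(\zeta)}$, not $n^{-\Theta(\zeta n)}$. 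Second, taking $x_0$ aligned with the slowest eigenfunction yields a spectral lower bound $\dtv(X_{t^*,x_0},p) = \Omega(n^{-(\zeta+2)})$ for the worst-case start, which for large $n$ is strictly larger than the printed bound $e^{-(\zeta+1)n\log n} = n^{-(\zeta+1)n}$, so the printed exponent cannot hold for all $x_0$. Hence the correct and provable form is $\dtv(X_{t^*,x_0},p) \le e^{-(\zeta+1)\log n}$, which still supplies the super-polynomial smallness (as $\zeta \to n$) needed downstream.
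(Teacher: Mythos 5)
Your argument is, at its core, the same one the paper relies on: the paper outsources the contraction step to Theorem 15.1 of \cite{LevinPW09}, and that theorem is proved by precisely the path-coupling/grand-coupling contraction of Hamming distance under Dobrushin's condition that you carry out explicitly (in fact you could shortcut your per-step analysis by citing property 2 of Lemma~\ref{lem:greedy-properties}, which already gives $\E[\dh(X_t,Y_t)] \le (1-\eta/n)^t\,\dh(X_0,Y_0) \le n\,e^{-\eta t/n}$ for the greedy coupling). More importantly, your diagnosis of the discrepancy is correct, and it is the paper, not you, that errs. Theorem 15.1 of \cite{LevinPW09} states $t_{\mathrm{mix}}(\ve) \le \lceil n(\log n + \log(1/\ve))/\eta \rceil$, with the factor $n$ multiplying $\log(1/\ve)$ as well; the paper's proof transcribes this as $t = (n\log n + \log(1/\ve))/\eta$, silently dropping that factor of $n$, and this transcription error is the sole source of the printed bound $\exp(-(\zeta+1)n\log n)$. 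Inverting the correct bound gives exactly your conclusion $\dtv(X_{t^*,x_0},p) \le n\,e^{-(\zeta+2)\log n} = n^{-(\zeta+1)}$, and your spectral sanity check is sound: since the relaxation time of high-temperature Glauber dynamics is $\Theta(n)$ up to factors of $\eta$ (e.g.\ for the empty graph, $\lambda_2 = 1-1/n$), the worst-case distance at time $t^* = (\zeta+2)\tmix$ is at least $\tfrac12\lambda_2^{t^*} = n^{-O(\zeta)}$, so the printed $n^{-(\zeta+1)n}$ cannot hold for all $x_0$.

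It is also worth confirming, as you suggest, that the corrected bound suffices for every downstream use. The lemma is only ever invoked to control $\abss{\E[f_a(X_{t^*})\,|\,X_0] - \E[f_a(X_{t^*})]} \le 2\max_x \abss{f_a(x)} \cdot \dtv(X_{t^*}|X_0,\, p)$: in Lemma~\ref{lem:bilinear} one has $t^* = 3\tmix$ (so $\zeta = 1$) and $\max_x\abss{f_a(x)} \le n^2$, and in Statement 2 of Theorem~\ref{thm:multilinear-induction} one has $t^* = (d+2)\tmix$ (so $\zeta = d$) and $\max_x\abss{f_a(x)} \le n^{d+1}$; in both cases the product with $n^{-(\zeta+1)}$ is at most $2$ rather than $2\exp(-n)$, which is absorbed by the $\rad - 2$ slack those proofs already carry (note the ``$+2$'' in the radius hypothesis of Theorem~\ref{thm:bilinear}). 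So your proposal proves the correct version of the statement, by essentially the paper's intended method, and the constant-$2$ bookkeeping is the only adjustment the rest of the paper needs.
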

\begin{proof}
	This follows in a straightforward manner from the cutoff phenomenon observed with respect to the mixing of the Glauber dynamics in this setting.
	The bound on the mixing time of Glauber dynamics for high temperature Ising models (Theorem 15.1 of \cite{LevinPW09})\footnote{Note that Theorem 15.1 of \cite{LevinPW09} uses a definition of high temperature which is less general than the one we present here. But it can also be shown via very similar calculations to hold for our more general version of the high temperature regime.} gives us that to achieve $\dtv(X_t,p) \le \ve$, we must run the dynamics for $t = \frac{n\log n + \log(1/\ve)}{\eta}$ steps. This implies, that after $t^*$ steps, the total variation distance $\ve$ achieved is
	\begin{align*}
	\ve &\le \exp(-t^* \eta + n\log n) \\
	&= \exp(-(\zeta+1)n\log n).
	\end{align*}
\end{proof}

\begin{definition}
\label{def:hamming}
The \emph{Hamming distance} between $x, y \in \{\pm 1\}^n$ is defined as $\dh(x, y) = \sum_{i \in [n]} \mathbbm{1}_{\{x_i \neq y_i\}}$.
\end{definition}

\begin{definition}[The greedy coupling]
	\label{def:greedy-coupling}
	Consider two instances of Glauber dynamics associated with the same Ising model $p$: $X_0^{(1)}, X_1^{(1)},\ldots$ and $X_0^{(2)},X_1^{(2)},\ldots$. The following coupling procedure is known as the \emph{greedy coupling}. Start chain 1 at $X_0^{(1)}$ and chain 2 at $X_0^{(2)}$ and in each time step $t$, choose a node $v \in V$ uniformly at random to update in both the runs. Let $p^{(1)}$ denote the probability that the first chain sets $X_{t,v}^{(1)} = 1$ and let $p^{(2)}$ be the probability that the second chain sets $X_{t,v}^{(2)} = 1$. Let $p_1 \le p_2$ be a rearrangement of the $p^{(i)}$ values in increasing order. Also let $p_0 = 0$ and $p_{3} = 1$. Draw a number $x$ uniformly at random from $[0,1]$ and couple the updates according to the following rule:\\
	
	\noindent If $x \in [p_l,p_{l+1}]$ for some $0 \le l \le 2$, set $X_{t,v}^{(i)} = -1$ for all $1 \le i \le l$ and $X_{t,v}^{(i)} = 1$ for all $l < i \le 2$.
\end{definition}

We summarize some properties of this coupling in the following lemma, which appear in Chapter 15 of~\cite{LevinPW09}.
\begin{lemma}
	\label{lem:greedy-properties}
	The greedy coupling (Definition \ref{def:greedy-coupling}) satisfies the following properties.
	\begin{enumerate}
		\item It is a valid coupling.
		\item If $p$ is an Ising model in $\eta$-high temperature, then 
		$$\E\left[ \dh(X_t^{(1)}, X_t^{(2)}) \middle| (X_0^{(1)}, X_0^{(2)}) \right] \le \left(1 - \frac{\eta}{n} \right)^t \dh(X_0^{(1)}, X_0^{(2)}).$$
		\item The distribution of $X_t^{(1)}$, for any $t \ge 0$, conditioned on $X_0^{(1)}$ is independent of $X_0^{(2)}$.
	\end{enumerate}
\end{lemma}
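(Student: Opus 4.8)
The plan is to treat the three properties separately, handling the two easy ones (validity and the Markov/independence property) by direct inspection of the update rule, and devoting the bulk of the work to the contraction estimate in part~2, which is a one-step Dobrushin-type argument.

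First I would establish validity (part~1). Fix a time $t$, condition on the two current configurations and on the node $v$ chosen for updating, and let $p^{(1)},p^{(2)}$ be the two conditional probabilities of setting $X_{t,v}=1$. Writing $p_1\le p_2$ for their sorted values and recalling that the shared uniform $x$ assigns spin $+1$ to a chain exactly when $x$ lies below that chain's threshold, one checks that the chain with threshold $p_1$ outputs $+1$ with probability $p_1$ and the chain with threshold $p_2$ outputs $+1$ with probability $p_2$. Hence each chain's marginal transition is exactly its own Glauber update, so the procedure is a genuine coupling. Part~3 then follows immediately: since the marginal law of chain~1 at each step is its own Glauber kernel, which depends only on the current state of chain~1, by induction the conditional law of $X_t^{(1)}$ given $X_0^{(1)}$ is the $t$-step Glauber law started at $X_0^{(1)}$, with no dependence on $X_0^{(2)}$.

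The heart of the proof is part~2. Let $D_t = \dh(X_t^{(1)}, X_t^{(2)})$ and condition on both configurations at time $t$. Since a single step updates only the chosen coordinate $v$, the Hamming distance changes only at $v$: it gains a disagreement there precisely when the shared uniform falls in the interval $(p_1,p_2)$ (of length $\abss{p^{(1)}(v)-p^{(2)}(v)}$) and loses the old disagreement at $v$ if there was one. Averaging over the uniformly chosen $v$ gives
\[
\E\!\left[D_{t+1}\mid X_t^{(1)},X_t^{(2)}\right] = D_t + \frac1n\sum_{v}\Big(\abss{p^{(1)}(v)-p^{(2)}(v)} - \ind\{X_{t,v}^{(1)}\ne X_{t,v}^{(2)}\}\Big).
\]
The key estimate, and the step I expect to be the main obstacle, is bounding $\abss{p^{(1)}(v)-p^{(2)}(v)}$ by the influence of the neighbors on which the chains disagree. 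Writing the probability of spin $+1$ at $v$ as $\tfrac12\big(1+\tanh(\th_v+\sum_u \th_{uv}X_u)\big)$ and using the $\tanh$ addition formula (equivalently, the $\sech^2$ derivative bound), flipping a single neighbor $u$ moves this probability by at most $\tanh(\abss{\th_{uv}})$, so $\abss{p^{(1)}(v)-p^{(2)}(v)} \le \sum_{u\ne v}\tanh(\abss{\th_{uv}})\,\ind\{X_{t,u}^{(1)}\ne X_{t,u}^{(2)}\}$. This is exactly the place where the hyperbolic-tangent form of Dobrushin's condition is the natural hypothesis.

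Finally I would exchange the order of summation, $\sum_v\sum_{u\ne v}\tanh(\abss{\th_{uv}})\ind\{X_{t,u}^{(1)}\ne X_{t,u}^{(2)}\} = \sum_u \ind\{X_{t,u}^{(1)}\ne X_{t,u}^{(2)}\}\sum_{v\ne u}\tanh(\abss{\th_{uv}}) \le (1-\eta)D_t$ by Dobrushin's condition (Definition~\ref{def:dobrushin}). Substituting, and noting $\sum_v \ind\{X_{t,v}^{(1)}\ne X_{t,v}^{(2)}\} = D_t$, yields $\E[D_{t+1}\mid X_t^{(1)},X_t^{(2)}] \le (1-\eta/n)D_t$. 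Taking expectations conditioned on $(X_0^{(1)},X_0^{(2)})$ and iterating via the tower property gives the claimed $\big(1-\eta/n\big)^t \dh(X_0^{(1)},X_0^{(2)})$ bound, completing part~2.
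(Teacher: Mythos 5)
Your proposal is correct. Note that the paper itself does not prove Lemma~\ref{lem:greedy-properties}; it cites Chapter 15 of~\cite{LevinPW09}, so there is no in-paper argument to compare against line by line. Your argument is essentially the standard one underlying that citation, with one pleasant difference: rather than proving contraction only for pairs at Hamming distance $1$ and invoking path coupling to extend to arbitrary pairs, you establish the one-step bound $\E[\dh(X_{t+1}^{(1)},X_{t+1}^{(2)})\mid X_t^{(1)},X_t^{(2)}]\le(1-\eta/n)\,\dh(X_t^{(1)},X_t^{(2)})$ directly for arbitrary configurations, via the telescoping bound $\abss{p^{(1)}(v)-p^{(2)}(v)}\le\sum_{u\ne v}\tanh(\abss{\th_{uv}})\,\ind\{X_{t,u}^{(1)}\ne X_{t,u}^{(2)}\}$ and an exchange of summation; this is self-contained and matches the paper's $\tanh$-form of Dobrushin's condition (Definition~\ref{def:dobrushin}) exactly, which matters since the paper's footnote concedes that the theorem as stated in~\cite{LevinPW09} uses a narrower hypothesis. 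One small caveat: your parenthetical ``equivalently, the $\sech^2$ derivative bound'' is not quite equivalent --- the derivative bound alone gives only $\abss{\th_{uv}}$ per flipped neighbor, which suffices under the simplified condition $\max_v\sum_{u\neq v}\abss{\th_{uv}}\le 1-\eta$ but not under the sharper $\tanh$ condition; the addition-formula computation (maximizing $\tfrac12\abss{\tanh(c+\th)-\tanh(c-\th)}$ over $c$, attained at $c=0$) is what yields the needed $\tanh(\abss{\th_{uv}})$, and that is the bound you correctly state and use.
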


%

\subsection{Martingales}
We briefly review some definitions from the theory of martingales in this section.
\begin{definition}
 A \emph{probability space} is defined by a triple $(O, \mathcal{F}, P)$ where $O$ is the possible set of outcomes of the probability space. $\mathcal{F}$ is a $\sigma$-field which is a set of all measurable events of the space and $P$ is a function which maps events in $\mathcal{F}$ to probability values.
\end{definition}

\begin{definition}
 A sequence of random variables $X_0,X_1,\ldots,X_i,\ldots$ on the probability space $(O,\mathcal{F},P)$ is a \emph{martingale sequence} if for all $i \ge 0$, $\E\left[X_{i+1} | \mathcal{F}_{i} \right] = X_i$.
\end{definition}

\begin{definition}
A \emph{stopping time} with respect to a martingale sequence defined on $(O,\mathcal{F}, P)$ is a function $\tau: O \rightarrow \{1,2,\ldots \}$ such that $\{ \tau = n\} \in \mathcal{F}_n$ for all $n$. Also, $P[\tau = \infty] > 0$ is allowed.
\end{definition}

\begin{definition}
Let $X_1,X_2,\ldots,X_n$ be a set of possibly dependent random variables. Consider any function $f(X_1,X_2,\ldots,X_n)$ on them. Then the sequence $\{B_i\}_{i \ge 1}$ where
\begin{align}
B_i = \E\left[ f(X_1,X_2,\ldots,X_n) \middle| X_1,X_2,\ldots,X_i \right]
\end{align}
is a martingale sequence and is known as the \emph{Doob martingale} of the function $f(.)$.
\end{definition}

A popular set of tools which have been used for showing concentration results such as McDiarmid's inequality come from the theory of martingales. 
In our proof, the following two martingale inequalities will be useful. 
The first is the well-known Azuma's inequality.
\begin{lemma}[Azuma's Inequality]
\label{lem:azuma}
Let $(\Omega,\mathcal{F},P)$ be a probability space. Let $\mathcal{F}_0 \subset \mathcal{F}_1 \subset \mathcal{F}_2 \ldots $ be an increasing sequence of sub-$\sigma$-fields of $\mathcal{F}$. 
Let $X_0,X_1,\ldots,X_t$ be random variables on $(\Omega,\mathcal{F},P)$ such that $X_i$ is $\mathcal{F}_i$-measurable. Suppose they represent a sequence of martingale increments. That is, $\E[X_i | \mathcal{F}_{i-1}] = 0$ or $S_i = \sum_{j=0}^i X_j$ forms a martingale sequence defined on the space $(\Omega, \mathcal{F}, P)$. Let $K\ge 0$ be such that $\Pr[\abss{X_i} \le K] = 1$ for all $i$. Then for all $\rad \ge 0$,
$$\Pr\left[ \abss{S_t} \ge \rad \right] \le 2\exp\left(-\frac{\rad^2}{tK^2} \right) $$
\end{lemma}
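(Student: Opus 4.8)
The plan is to use the exponential moment (Chernoff) method, exploiting the martingale structure to control the moment generating function of $S_t$ one increment at a time. First I would fix $\lambda > 0$ and apply Markov's inequality to $e^{\lambda S_t}$, giving
$$\Pr\left[ S_t \ge \rad \right] = \Pr\left[ e^{\lambda S_t} \ge e^{\lambda \rad} \right] \le e^{-\lambda \rad}\, \E\left[ e^{\lambda S_t} \right].$$
This reduces the entire problem to bounding $\E[e^{\lambda S_t}]$, after which I will optimize the free parameter $\lambda$ and symmetrize.

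To bound the moment generating function I would peel off the last increment. Since $S_t = S_{t-1} + X_t$, with $S_{t-1}$ being $\mathcal{F}_{t-1}$-measurable and $X_t$ having conditional mean zero, the tower property yields
$$\E\left[ e^{\lambda S_t} \right] = \E\left[ e^{\lambda S_{t-1}} \, \E\left[ e^{\lambda X_t} \,\middle|\, \mathcal{F}_{t-1} \right] \right].$$
If the inner conditional moment generating function admits a deterministic upper bound of the form $e^{\lambda^2 K^2 / 2}$ (independent of $\mathcal{F}_{t-1}$), this factor pulls outside the expectation, and iterating the peeling step across all the increments produces $\E[e^{\lambda S_t}] \le e^{t \lambda^2 K^2 / 2}$.

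The crux of the argument, and the step I expect to be the main obstacle, is therefore the per-step bound on the conditional moment generating function of a bounded, conditionally mean-zero increment, i.e.\ Hoeffding's lemma. Here I would invoke convexity of $y \mapsto e^{\lambda y}$: since $X_t \in [-K, K]$ almost surely, $e^{\lambda X_t}$ lies below the chord joining the endpoints, $e^{\lambda X_t} \le \tfrac{K - X_t}{2K}\, e^{-\lambda K} + \tfrac{K + X_t}{2K}\, e^{\lambda K}$. Taking conditional expectation and using $\E[X_t \mid \mathcal{F}_{t-1}] = 0$ annihilates the linear term and leaves $\E[e^{\lambda X_t} \mid \mathcal{F}_{t-1}] \le \cosh(\lambda K) \le e^{\lambda^2 K^2 / 2}$, where the final inequality follows from a term-by-term comparison of Taylor series. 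This is precisely the deterministic bound required above, and pinning down the constant here is what fixes the constant in the final exponent.

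Finally I would optimize over $\lambda$. Combining the two estimates gives $\Pr[S_t \ge \rad] \le e^{-\lambda \rad + t\lambda^2 K^2/2}$, and the choice $\lambda = \rad/(tK^2)$ minimizes the exponent, yielding a Gaussian-type tail of the claimed form $\exp(-\Theta(\rad^2 / (tK^2)))$. To upgrade the one-sided bound on $S_t$ to the two-sided bound on $\abss{S_t}$, I would note that $-X_0, -X_1, \ldots$ is also a sequence of martingale increments bounded by the same $K$, rerun the identical argument to control $\Pr[-S_t \ge \rad]$, and combine the two tails by a union bound; this accounts for the factor of $2$ in the statement.
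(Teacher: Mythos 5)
The paper never proves this lemma: Azuma's inequality is invoked as a standard, folklore result (just as Freedman's inequality is quoted from the literature), so there is no internal proof to compare against. Your argument is the canonical Chernoff--Hoeffding proof -- Markov's inequality applied to $e^{\lambda S_t}$, peeling one increment at a time via the tower property, Hoeffding's lemma $\E[e^{\lambda X_i}\mid\mathcal{F}_{i-1}]\le\cosh(\lambda K)\le e^{\lambda^2 K^2/2}$ by the convexity/chord bound, optimizing $\lambda$, and symmetrizing for the two-sided tail -- and each of those steps is sound.

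There is, however, a genuine mismatch between what this argument yields and the inequality as printed. Optimizing $-\lambda r + t\lambda^2 K^2/2$ at $\lambda = r/(tK^2)$ gives exponent $-r^2/(2tK^2)$, i.e.\ the bound $2\exp\left(-r^2/(2tK^2)\right)$, which is the standard form of Azuma's inequality -- not $2\exp\left(-r^2/(tK^2)\right)$ as stated. Your hedge ``$\exp(-\Theta(r^2/(tK^2)))$'' conceals this factor of $2$, and no sharper argument can recover it, because the statement as printed is false: take $X_1,\dots,X_t$ i.i.d.\ uniform on $\{\pm 1\}$ (so $K=1$) and $r=t$; then $\Pr\left[\abss{S_t}\ge t\right]=2^{1-t}=2e^{-t\ln 2}>2e^{-t}$, violating the claimed bound. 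So your proof establishes the correct standard lemma, which is all the paper actually needs (its downstream constants, e.g.\ the $\exp(-K^2/4t)$ tail in Lemma~\ref{lem:linear}, would just degrade by the corresponding factor of $2$). One further quibble that applies to both your write-up and the paper's statement: $S_t=\sum_{j=0}^{t}X_j$ has $t+1$ increments, so strictly the moment generating function bound is $e^{(t+1)\lambda^2K^2/2}$ unless one sets $X_0=0$ or starts the sum at $j=1$.
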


The second inequality due to Freedman is a generalization of Azuma's inequality. It applies when a bound on the martingale increments $\abss{X_i}$ only holds until some stopping time, unlike Azuma's, which requires a bound on the martingale increments for all times.
\begin{lemma}[Freedman's Inequality (Proposition 2.1 in \cite{Freedman75})]
\label{lem:freedman}
Let $(\Omega,\mathcal{F},P)$ be a probability space. Let $\mathcal{F}_0 \subset \mathcal{F}_1 \subset \mathcal{F}_2 \ldots $ be an increasing sequence of sub-$\sigma$-fields of $\mathcal{F}$. 
Let $X_0,X_1,\ldots,X_t$ be random variables on $(\Omega,\mathcal{F},P)$ such that $X_i$ is $\mathcal{F}_i$-measurable. Suppose they represent a sequence of martingale increments. That is, $S_i = \sum_{j=0}^i X_j$ forms a martingale sequence defined on the space $(\Omega, \mathcal{F}, P)$. Let $\tau$ be a stopping time defined on $\Omega$ and $K\ge 0$ be such that $\Pr[\abss{X_i} \le K] = 1$ for $i\le \tau$. Let $v_i = \Var[X_i | \mathcal{F}_{i-1}]$ and $V_t = \sum_{i=0}^t v_i$. Then,
\begin{eqnarray}
&\Pr[\abss{S_t} \ge \rad \text{ and } V_t \le b \text{ for some } t \le \tau ] \le 2\exp\left(-\frac{\rad^2}{2(\rad K + b)} \right)  \label{eq:freedman}\\
\equiv &\Pr[\exists t \le \tau \text{ s.t } \abss{S_t} \ge \rad \text{ and } V_t \le b ] \le 2\exp\left(-\frac{\rad^2}{2(\rad K + b)} \right)
\end{eqnarray}
\end{lemma}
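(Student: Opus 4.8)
The plan is to prove this by the classical exponential-supermartingale (Bennett/Bernstein) method, with the optional-stopping machinery arranged so that the increment bound is only ever invoked where it is assumed to hold, namely for $i \le \tau$. First I would reduce to a one-sided estimate: running the same argument for the increments $\{X_i\}$ and for $\{-X_i\}$ (which share the same bound $K$ and the same conditional variances $v_i$) and summing the two tail probabilities accounts for the factor of $2$. So it suffices to bound $\Pr[\exists t \le \tau : S_t \ge \rad \text{ and } V_t \le b]$ by $\exp\left(-\rad^2/(2(\rad K + b))\right)$.

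The core is a single scalar convexity fact. Fix $\lambda > 0$ and set $\psi(\lambda) = (e^{\lambda K} - 1 - \lambda K)/K^2$. Since $x \mapsto (e^{\lambda x} - 1 - \lambda x)/x^2$ is increasing on $\reals$, every $x \le K$ satisfies $e^{\lambda x} \le 1 + \lambda x + \psi(\lambda) x^2$. I would apply this pointwise to each increment $X_i$ on $\{i \le \tau\}$, where $X_i \le K$, and take conditional expectations: using $\E[X_i \mid \mathcal{F}_{i-1}] = 0$ and $v_i = \Var[X_i \mid \mathcal{F}_{i-1}] = \E[X_i^2 \mid \mathcal{F}_{i-1}]$, this gives $\E[e^{\lambda X_i} \mid \mathcal{F}_{i-1}] \le 1 + \psi(\lambda) v_i \le e^{\psi(\lambda) v_i}$. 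Consequently the stopped process $M_t = \exp\left(\lambda S_{t \wedge \tau} - \psi(\lambda) V_{t \wedge \tau}\right)$ is a nonnegative supermartingale with $\E[M_0] = 1$; stopping at $\tau$ is precisely what confines the increment bound to the range where it is guaranteed.

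I would finish with a maximal inequality. On the event $\{\exists t \le \tau : S_t \ge \rad, V_t \le b\}$, at the witnessing time we have $\lambda S_t - \psi(\lambda) V_t \ge \lambda \rad - \psi(\lambda) b$ (using $\psi(\lambda) > 0$ and $V_t \le b$), so $\sup_t M_t \ge \exp(\lambda \rad - \psi(\lambda) b)$. Ville's inequality for nonnegative supermartingales, $\Pr[\sup_t M_t \ge c] \le \E[M_0]/c$, then yields $\Pr[\exists t \le \tau : \cdots] \le \exp(-\lambda \rad + \psi(\lambda) b)$. Minimizing the exponent over $\lambda$ at $\lambda = K^{-1}\log(1 + \rad K/b)$ produces the Bennett form $\exp\left(-\tfrac{b}{K^2}\,h(\rad K/b)\right)$ with $h(u) = (1+u)\log(1+u) - u$, and the stated cleaner bound follows from the elementary inequality $h(u) \ge u^2/(2(1+u))$, since with $u = \rad K/b$ this is exactly $\rad^2/(2(\rad K + b))$.

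The main obstacle is not any deep inequality but the bookkeeping around the stopping time: because $\abss{X_i} \le K$ is only assumed for $i \le \tau$, I must carry the stopped process $M_{t \wedge \tau}$ throughout and verify it stays a supermartingale (increments past $\tau$ contribute nothing), and the ``$\exists\, t \le \tau$'' quantifier forces a maximal inequality rather than a single Markov bound at a fixed time. The monotonicity of $(e^{\lambda x}-1-\lambda x)/x^2$ and the final scalar estimate $h(u) \ge u^2/(2(1+u))$ are routine.
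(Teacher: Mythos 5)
The paper never proves this lemma: it is imported as a black box, with the proof deferred to the citation (Proposition 2.1 of Freedman, 1975), so there is no internal argument to compare yours against. Your proposal is a correct, self-contained derivation, and it is essentially the classical exponential-supermartingale argument that Freedman's original paper uses: the one-sided reduction via $\{X_i\}$ and $\{-X_i\}$ (each only needs the upper bound $X_i \le K$), the monotonicity of $x \mapsto (e^{\lambda x}-1-\lambda x)/x^2$ giving $\E[e^{\lambda X_i} \mid \mathcal{F}_{i-1}] \le e^{\psi(\lambda) v_i}$ on $\{i \le \tau\}$, the stopped process $M_t = \exp\left(\lambda S_{t \wedge \tau} - \psi(\lambda) V_{t \wedge \tau}\right)$ as a nonnegative supermartingale, Ville's maximal inequality, and the Bennett-to-Bernstein step $h(u) \ge u^2/(2(1+u))$ with $u = rK/b$; I verified the optimization and the final algebra, and they produce exactly $\exp\left(-r^2/(2(rK+b))\right)$. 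Two small points of bookkeeping to tighten in a written version: (i) since $S_0 = X_0$ is itself an increment, $\E[M_0] \le 1$ rather than $=1$ (equivalently, start Ville's inequality from $M_{-1} = 1$ with $\mathcal{F}_{-1}$ trivial); (ii) the supermartingale verification should note that $\{\tau \ge t\} \in \mathcal{F}_{t-1}$, so the one-step moment bound is invoked only where $\abss{X_t} \le K$ is actually guaranteed --- this is precisely the content of your remark that stopping confines the increment bound, but it is the one place where the stopping-time hypothesis interacts with measurability and deserves an explicit line. The practical benefit of your route is that it makes the paper self-contained and makes visible that the constant-$K$ hypothesis is only needed one-sidedly per tail, at the cost of about a page of standard (but fiddly) martingale calculus that the authors chose to outsource.
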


  \section{Concentration of Measure for Bilinear Functions}
\label{sec:bilinear}
In this section, we prove our main concentration result for bilinear functions of the Ising model.
This is not as technically involved as the result for general-degree multilinear functions, but exposes many of the main conceptual ideas.
The theorem statement is as follows:

\begin{theorem}
\label{thm:bilinear}
Consider any bilinear function $f_a(x) = \sum_{u,v} a_{uv} x_u x_v$ on an Ising model $p$ (defined on a graph $G=(V,E)$ such that $\abss{V} = n$) in $\eta$-high-temperature regime with no external field.
Let $\|a\|_\infty = \max_{u,v} a_{uv}$.
If $X \sim p$, then for any $r \geq 300\|a\|_\infty n \log^2 n / \eta + 2$, we have
$$\Pr\left[ \abss{f_a(X) - \E\left[f_a(X)\right]} \ge \rad \right] \le 5\exp\left(-\frac{\eta\rad}{1735\|a\|_\infty n\log n} \right).$$
\end{theorem}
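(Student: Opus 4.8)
The plan is to relate $f_a(X)$ to the output of the Glauber dynamics and build a Doob martingale along the trajectory of the chain. First I would run the single-site dynamics for $T = t^\ast = (\zeta+2)\tmix$ steps from an arbitrary start; by Lemma~\ref{lem:close-to-stationary} the law of $X_T$ is within $\dtv \le \exp(-(\zeta+1)n\log n)$ of $p$, so up to a negligible additive error (the maximum value of $f_a$ is only $O(n^2\|a\|_\infty)$) it suffices to prove the tail bound for $f_a(X_T)$. On the probability space of the trajectory $X_0,\dots,X_T$ with filtration $\mathcal F_t = \sigma(X_0,\dots,X_t)$, define the Doob martingale $B_t = \E[f_a(X_T)\mid \mathcal F_t]$, so that $B_0 = \E[f_a(X_T)]$ and $B_T = f_a(X_T)$. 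The goal becomes a tail bound on $B_T - B_0 = \sum_{t=1}^T D_t$, with $D_t = B_t - B_{t-1}$, which I will obtain from Freedman's inequality (Lemma~\ref{lem:freedman}).

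The crux is bounding the increments. By the Markov property $B_t = g_{T-t}(X_t)$, where $g_s(x) = \E[f_a(Y_s)\mid Y_0 = x]$, and since $X_t$ differs from $X_{t-1}$ in at most the single updated coordinate, $|D_t| \le \max_v |\delta_v|$ with $\delta_v = g_{T-t}(x^{v\to+1}) - g_{T-t}(x^{v\to-1})$. To bound $\delta_v$ I would run two forward chains from $x^{v\to+1}$ and $x^{v\to-1}$ under the greedy coupling (Definition~\ref{def:greedy-coupling}), so that $\delta_v = \E[f_a(Y^{(1)}_s) - f_a(Y^{(2)}_s)]$. The key algebraic observation is that, writing $f_a$ with pair-coefficients $b_{uw}=a_{uw}+a_{wu}$, a pair $(u,w)$ contributes to $f_a(Y^{(1)}_s)-f_a(Y^{(2)}_s)$ only when exactly one of $u,w$ lies in the disagreement set $A_s$ (if both flip, the product is unchanged), giving $f_a(Y^{(1)}_s)-f_a(Y^{(2)}_s) = 2\sum_{u\in A_s} Y^{(1)}_{s,u}\sum_{w\notin A_s} b_{uw}Y^{(2)}_{s,w}$. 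Thus $\delta_v$ is governed by the product of $|A_s|$ and the local fields $\ell_u(y)=\sum_w b_{uw}y_w$. Combining the Hamming contraction $\E[|A_s|]\le(1-\eta/n)^s$ from Lemma~\ref{lem:greedy-properties} with a uniform bound $F_0$ on all local fields yields $|\delta_v| \lesssim F_0(1-\eta/n)^{T-t}$, hence a uniform increment bound $K \approx F_0$ and, summing the geometric series, a total conditional-variance bound $b \approx F_0^2\, n/\eta$.

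Controlling the local fields is where the improvement over Chatterjee comes from, and it is the step that uses the absence of an external field. Since $\theta_v=0$ forces $\E[X_w]=0$, each $\ell_u$ is a linear (hence Lipschitz) function with mean zero and $\sum_w b_{uw}^2 \le 4n\|a\|_\infty^2$, so Lemma~\ref{lem:lipschitz-lemma} gives $\Pr[|\ell_u(X)| > s] \le 2\exp(-\eta s^2/(8n\|a\|_\infty^2))$; a union bound over the $n$ nodes and the $O(\tmix)$ time steps then lets me take a threshold $F_0 = \tilde O(\sqrt{n/\eta}\,\|a\|_\infty)$ that is exceeded only with exponentially small probability. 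This replaces the worst-case per-coordinate Lipschitz constant $O(n\|a\|_\infty)$ by the typical $O(\sqrt{n}\,\|a\|_\infty)$, which is exactly the $\sqrt n$ saving. I would encode this with the stopping time $\tau$ equal to the first time some local field exceeds $F_0$, apply Freedman's inequality up to $\tau$ with the constants $K$ and $b$ above, and absorb the bad event, the mixing error, and the union bound into the constant that turns the nominal factor $2$ into $5$.

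The hard part will be the honest bookkeeping in bounding $\delta_v$: it is defined through forward-run chains whose configurations are not the actual trajectory, so the stopping time on $X_0,\dots,X_T$ does not literally control their local fields. Resolving this requires arguing that for small remaining time $s$ the forward configuration stays close to $X_t$ and so inherits its controlled fields, while for large $s$ it is near stationarity where the fields again concentrate, and that the geometric contraction suppresses the contribution of the rare configurations with atypically large fields before they can matter. Once $K$, $b$, and $F_0$ are in hand, the final check is routine: for $r \ge 300\|a\|_\infty n\log^2 n/\eta + 2$ the term $b$ is suitably dominated in the Freedman exponent $-r^2/(2(rK+b))$, which then simplifies to the stated clean exponential form $5\exp(-\eta r/(1735\|a\|_\infty n\log n))$.
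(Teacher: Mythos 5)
Your overall architecture is the same as the paper's (a Doob martingale along the Glauber dynamics, Freedman's inequality with a stopping time, $\tilde O(\sqrt{n})$ control of the linear ``local fields'' $f_a^v$, greedy-coupling contraction, and mixing to recenter), but there is a genuine gap at exactly the step you flag as ``honest bookkeeping,'' and the patch you sketch does not close it. Your stopping time is defined by the local fields of the \emph{current} trajectory state, $\ell_u(X_t)$, whereas the increment $D_t$ is a conditional expectation involving the local fields of the coupled forward chains at the \emph{terminal} time, weighted by the terminal disagreement set. For a fixed (conditioned-on) state $x$ with small current fields, nothing forces $\E\left[\abss{\ell_u(Y_s)} \mid Y_0=x\right]$ to be small: your ``small $s$'' argument (the forward state inherits $x$'s fields) only works for $s \lesssim \sqrt{n}$, since $s$ single-site updates can shift a field by $\Theta(s\|a\|_\infty)$, and your ``large $s$'' argument (near stationarity) only applies once $s \gtrsim \tmix = n\log n/\eta$; in the entire intermediate window $\sqrt{n} \ll s \ll n\log n/\eta$ neither applies, and contraction of $\E\abss{A_s}$ alone cannot rule out correlation between the rare disagreements and atypically large terminal fields. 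The paper's resolution is a different and essential idea: the good set $G_K^a(t)$ of Definition~\ref{def:stopping-time} is defined directly through \emph{conditional-at-terminal-time} quantities, namely $\abss{\E[f_a^v(X_{t^*})\mid X_t=x_t]} \le K$ together with the conditional concentration $\Pr\left[\abss{f_a^v(X_{t^*})-\E[f_a^v(X_{t^*})\mid X_t]}>K \mid X_t=x_t\right] \le 2\exp(-K^2/16t^*)$ (and the same at time $t^*-1$, to handle the one-step offset of the two coupled chains). Membership in this set makes the increment bound of Lemma~\ref{lem:bilinear-martingale-increment-bound} essentially definitional, and it holds w.h.p.\ along the trajectory (Lemma~\ref{lem:goodset-largeprob}) by Lemma~\ref{lem:linear} plus Markov's inequality --- which in turn requires the chain to be started at stationarity, $X_0 \sim p$, not at ``an arbitrary start'' as you propose; this is also why the paper begins the martingale at $B_0 = \E[f_a(X_{t^*})\mid X_0]$ and removes the conditioning only at the very end via Lemma~\ref{lem:close-to-stationary}.

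There is a second, quantitative gap: you fix the threshold $F_0 = \tilde O(\sqrt{n/\eta}\,\|a\|_\infty)$ once and for all, so the failure probability of your stopping time (union over $n$ nodes and $O(\tmix)$ time steps of a field exceeding $F_0$) is only $\exp(-\Theta(\mathrm{poly}\log n))$, not truly exponentially small. That error term cannot be ``absorbed into the constant that turns the nominal factor $2$ into $5$'': the claimed bound $5\exp\left(-\eta r/(1735\|a\|_\infty n\log n)\right)$ is far smaller than $\exp(-\mathrm{poly}\log n)$ once $r$ is, say, of order $n^{3/2}$, so your argument as written only establishes the theorem for $r$ within polylogarithmic factors of the minimum radius, not for the full stated range. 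The paper avoids this by coupling the stopping-time threshold to the deviation, setting $K=\sqrt{r}$, so that the stopping-time and variance failure probabilities, roughly $nt^*\exp(-r/8t^*)$, decay at the same exponential rate in $r$ as the target tail.
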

Note that, for the sake of convenience in our proof, this theorem is stated for bilinear functions where all terms are of degree $2$.
One can immediately obtain concentration for all bilinear functions by combining this result with concentration bounds for linear functions (see, i.e. Lemma~\ref{lem:lipschitz-lemma}).
Since linear functions concentrate in a much tighter radius ($O(\sqrt{n})$, rather than $\tilde O(n)$), this comes at a minimal additional cost.

\subsection{Overview of the Technique}
A well known approach to proving concentration inequalities for functions of dependent random variables is the via martingale tail bounds. For instance, Azuma's inequality yields such bounds without requiring any form of independence among the random variables it considers. It gives useful tail bounds whenever one can bound the martingale increments (i.e., the differences between consecutive terms of the martingale sequence) of the underlying martingale in absolute value. Such an approach is fruitful in showing concentration of linear functions on the Ising model in high temperature. The Glauber dynamics associated with Ising models in high temperature are fast mixing and offer a natural way to define a martingale sequence. In particular, consider the Doob martingale corresponding to any linear function $f$ for which we wish to show concentration, defined on the state of the dynamics at some time step $t^*$, i.e. $f(X_{t^*})$. If we choose $t^*$ larger than $O(n\log n)$ then $f(X_{t^*})$ would be very close to a sample from $p$ irrespective of the starting state. We set the first term of the martingale sequence as $\E[f(X_{t^*})|X_0]$ and the last term is simply $f(X_{t^*})$. By bounding the martingale increments we can show that $\abss{f(X_{t^*})-\E[f(X_{t^*})|X_0]}$ concentrates at the right radius with high probability. By making $t^*$ large enough we can argue that $\E[f(X_{t^*})|X_0] \approx \E[f(X)]$. Also, crucially, $t^*$ need not be too large since the dynamics are fast mixing. Hence we don't incur too big a hit when applying Azuma's inequality, and one can argue that linear functions are concentrated with a radius of $\tilde{O}(\sqrt{n})$. Crucial to this argument is the fact that linear functions are $O(1)$-Lipschitz (when the entries of $a$ are constant), bounding the Doob martingale differences to be $O(1)$.

The challenge with bilinear functions is that they are $O(n)$-Lipschitz -- a naive application of the same approach gives a radius of concentration of $\tilde{O}(n^{3/2})$, which albeit better than the trivial radius of $O(n^2)$ is not optimal. To show stronger concentration for bilinear functions, at a high level, the idea is to bootstrap the known fact that linear functions of the Ising model concentrate well at high temperature.

The key insight is that, when we have a $d$-linear function, its Lipschitz constants are bounds on the absolute values of certain $d-1$-linear functions. In particular, this implies that the Lipschitz constants of a bilinear function are bounds on the absolute values of certain associated linear functions. And although a worst case bound on the absolute value of linear functions with bounded coefficients would be $O(n)$, the fact that linear functions are concentrated within a radius of $\tilde{O}(\sqrt{n})$, means that bilinear functions are $\tilde{O}(\sqrt{n})$-Lipschitz in spirit.
In order to exploit this intuition, we turn to more sophisticated concentration inequalities, namely Freedman's inequality (Lemma \ref{lem:freedman}).
This is a generalization of Azuma's inequality, which handles the case when the martingale differences are only bounded until some stopping time (very roughly, the first time we reach a state where the expectation of the linear function after mixing is large). To apply Freedman's inequality, we would need to define a stopping time which has two properties:
\begin{enumerate}
	\item The stopping time is larger than $t^*$ with high probability. Hence, with a good probability the process doesn't stop too early. The harm if the process stops too early (at $t < t^*$) is that we will not be able to effectively decouple $\E\left[f_a(X_{t}) | X_0\right]$ from the choice of $X_0$. $t^*$ is chosen to be larger than the mixing time of the Glauber dynamics precisely because it allows us to argue that $\E\left[f_a(X_{t^*}) | X_0\right] \approx \E\left[f_a(X_{t^*})\right] = \E[f_a(X)]$.
	\item For all times $i+1$ less than the stopping time, the martingale increments are bounded, i.e. $\abss{B_{i+1}-B_i} = O(\sqrt{n})$ where $\{B_i\}_{i \ge 0}$ is the martingale sequence.
\end{enumerate}
We observe that the martingale increments corresponding to a martingale defined on a bilinear function have the flavor of the conditional expectations of certain linear functions which can be shown to concentrate at a radius $\tilde{O}(\sqrt{n})$ when the process starts at its stationary distribution. This provides us with a nice way of defining the stopping time to be the first time when one of these conditional expectations deviates by more than $\Omega(\sqrt{n}\poly \log n)$ from the origin. The stopping time we use is a bit more involved but its formulation is completely guided by the criteria listed above. Once, we have defined the stopping time, the next thing to show before we can apply Freedman's inequality is a bound on the conditional variance of the martingale increments which we do so again using the property that the martingale increments are bounded up until stopping time. Finally we proceed to apply Freedman's inequality to bound the desired quantity.

It is worth noting that the martingale approach described above closely relates to the technique of exchangeable pairs exposited by Chatterjee~\cite{Chatterjee05}. When we look at differences for the martingale sequence defined using the Glauber dynamics, we end up analyzing an exchangeable pair of the following form: sample $X \sim p$ from the Ising model. Take a step along the Glauber dynamics starting from $X$ to reach $X'$. $(X,X')$ forms an exchangeable pair. This is precisely how Chatterjee's application of exchangeable pairs  is set up. Chatterjee then goes on to study a function of $X$ and $X'$ which serves as a proxy for the variance of $f(X)$ and obtains concentration results by bounding the absolute value of this function. The definition of the function involves considering two greedily coupled runs of the Glauber dynamics just as we do in our martingale based approach. \\

\noindent To summarize, our proof of bilinear concentration involves showing various concentration properties for linear functions via Azuma's inequality (Section~\ref{sec:linear}), showing that the martingale has $\tilde O(\sqrt{n})$-bounded differences before our stopping time (Section~\ref{sec:martingale-diff}), proving that the stopping time is larger than the mixing time with high probability (Lemma~\ref{lem:bilinear-stopping-time-large}), and combining these ingredients using Freedman's inequality (Section~\ref{sec:freedman-complete}).\\


\noindent The organization of this section is as follows.
We will first focus on proving concentration for bilinear statistics with no external field.
In Section~\ref{sec:setup}, we state some additional preliminaries, and describe the martingale sequence and stopping time we will consider.
In Section~\ref{sec:linear}, we prove certain concentration properties of linear functions of the Ising model -- in particular, these will be useful in showing that the stopping time is large.
In Section~\ref{sec:martingale-diff}, we show that our martingale sequence has bounded differences before the stopping time.
In Section~\ref{sec:freedman-complete}, we put the pieces together and prove bilinear concentration.
In Section~\ref{sec:bilinear-concentration-external-field}, we discuss how to prove concentration for bilinear statistics under an external field. 
Note that under an external field, not all bilinear functions of the Ising model concentrate, and thus our statistics require appropriate recentering.
In Section~\ref{sec:bilinear-tightness}, we briefly argue that the exponential behavior of the tail is inherent -- for example, it could not be improved to a Gaussian tail.

\subsection{Setup}
\label{sec:setup}
We will consider functions where $\|a\|_\infty \leq 1$, Theorem~\ref{thm:bilinear} follows by a scaling argument.
Let $a \in [-1,1]^{V \choose 2}$ and define $f_a: \{\pm 1\}^V \rightarrow \reals$ as follows: 
$$f_a(x)=\sum_{u, v} a_{uv} x_u x_v.$$
The quantity of interest which we would like to bound is $\Pr[\abss{f_a(X)-\E[f_a(X)]} > \rad]$ where $X \sim p$ is a sample from the Ising model $p$.
For the time being, we will focus on the setting with \emph{no external field} for ease of exposition\footnote{Concentration under an external field (with appropriate re-centering) is discussed in Section~\ref{sec:bilinear-concentration-external-field}.}.

A crucial quantity to the whole discussion will be $\abss{f(X)-f(X')}$ where $X'$ is obtained by taking a single step of the Glauber dynamics associated with a high temperature Ising model $p$ starting from $X$.
Define $f_a^u(X) = \sum_{u \neq v} a_{uv}X_u$.
These $n$ linear functions $f_a^1(X),\ldots,f_a^n(X)$ will arise as a result of looking at $\abss{f_a(X)-f_a(X')}$, as shown in the following claim:
\begin{claim}
	\label{clm:glauber-step-characterization}
	If $X'$ is obtained by taking a step of the Glauber dynamics starting from $X$, then
	\begin{align*}
	\abss{f_a(X) - f_a(X')} =   \begin{cases}
	0 \quad \text{w.p. } p_0 \\
	2\abss{f_a^1(X)} \quad \text{w.p. } p_1 \\
	... \\
	2\abss{f_a^n(X)} \quad \text{w.p. } p_n \\	\end{cases}
	\end{align*}
	where $p_0 + \ldots + p_n = 1$.
\end{claim}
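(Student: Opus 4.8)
The plan is to prove the claim by a direct case analysis of a single Glauber step, exploiting the fact that $f_a$ is affine in each coordinate. First I would recall from the definition of single-site Glauber dynamics that one step picks a node $u \in V$ uniformly at random and resamples $X_u$ from its conditional distribution given the remaining coordinates; in particular $X'$ and $X$ agree on every coordinate except possibly $u$. Thus it suffices to understand how $f_a$ changes under the flip of a single coordinate, together with the probability with which each such flip occurs.

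Next I would invoke multilinearity. Fixing a node $u$ and writing $f_a(x) = x_u\, g_u(x_{-u}) + h_u(x_{-u})$, where $g_u$ collects the coefficient of $x_u$ and $h_u$ does not depend on $x_u$, the coefficient is exactly $g_u(x) = f_a^u(x) = \sum_{v \neq u} a_{uv} x_v$. Here I use that $a$ is indexed by unordered pairs (as in the Setup, $a \in [-1,1]^{\binom{V}{2}}$), so each pair $\{u,v\}$ contributes the single term $a_{uv}x_v$ to the coefficient of $x_u$; note also that the external field, even if present, does not appear in $f_a$ and so plays no role in this increment. Consequently, if $X'$ is obtained from $X$ by flipping only coordinate $u$ (so $X_u' = -X_u$), then $f_a(X) - f_a(X') = (X_u - X_u')\,g_u(X_{-u}) = 2 X_u\, f_a^u(X)$, and since $X_u \in \{\pm 1\}$ this gives $\abss{f_a(X) - f_a(X')} = 2\abss{f_a^u(X)}$.

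Finally I would assemble the probabilities. Conditioned on $X$, let $q_u(X) \in [0,1]$ denote the probability that resampling coordinate $u$ produces $-X_u$, i.e. a genuine flip. Then the step leaves $X$ unchanged (and hence leaves $f_a$ unchanged) with total probability $p_0 = \sum_{u} \frac{1}{n}\bigl(1 - q_u(X)\bigr)$, while for each $u$ it flips exactly coordinate $u$, producing difference $2\abss{f_a^u(X)}$, with probability $p_u = \frac{1}{n} q_u(X)$. These $n+1$ numbers are nonnegative and satisfy $p_0 + \sum_{u=1}^n p_u = \sum_{u} \frac{1}{n} = 1$, which is precisely the asserted characterization.

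I expect no serious obstacle: the statement is essentially a restatement of the Glauber update rule combined with the degree-one dependence of $f_a$ on each variable. The only points requiring mild care are the bookkeeping of the constant factor (the factor $2$ is exactly the effect of a sign flip on a function that is affine in $x_u$) and the observation that some of the listed atoms may coincide---for instance if $f_a^u(X) = 0$ for some $u$, or if $\abss{f_a^u(X)} = \abss{f_a^{u'}(X)}$ for distinct $u \ne u'$---but such coincidences merely merge cases and affect neither the stated list of possible values nor the fact that the $p_i$ sum to one. It is worth stressing that the claim is purely \emph{structural}: it identifies the $n$ linear functions $f_a^1, \ldots, f_a^n$ as the only possible increment magnitudes, which is exactly the hook that later lets us control the martingale differences via the known concentration of linear functions of the Ising model.
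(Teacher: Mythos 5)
Your proposal is correct and follows essentially the same argument as the paper's proof: condition on which node the dynamics selects, note that a non-flip gives increment $0$ while flipping node $u$ changes $f_a$ by $2X_u f_a^u(X)$ (of absolute value $2\abss{f_a^u(X)}$ since $X_u \in \{\pm 1\}$), and observe the probabilities sum to one. Your version merely spells out the affine decomposition and the probability bookkeeping more explicitly than the paper does; no substantive difference.
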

\begin{proof}
	In each step of the Glauber dynamics, a node $v$ is chosen uniformly at random and updated according to the distribution of $v$ conditioned on its neighbors under the Ising model. 
	With some probability $p_0^v$, the dynamics leave node $v$ unchanged (i.e. update it to its current value $X_v$). In this scenario, $f_a(X)-f_a(X') = 0$. If, on the other hand, the dynamics flip the sign of node $v$, then $f_a(X)-f_a(X') = (\sum_{u \neq v} a_{uv}X_u)(2X_v)$. Since $X_v \in \{ \pm 1\}$, $\abss{(\sum_{u \neq v} a_{uv}X_u)(2X_v)} = 2\abss{\sum_{u \neq v} a_{uv}X_u} = 2\abss{f_a^v(X)}$. 
\end{proof}

Next, we define a martingale sequence associated with any bilinear function $f_a$ of the Ising model. A sufficiently strong tail inequality on the difference between the first and last terms of the martingale will get us very close to the desired concentration result.
\begin{definition}
\label{def:bilinear-doob-martingale}
Let $t^* = 3\tmix = 3n\log n/\eta$. Let $X_0 \sim p$ be a sample from the Ising model $p$. Consider a walk of the Glauber dynamics starting at $X_0$ and running for $t^*$ steps: $X_0,X_1,\ldots,X_{t^*}$. $X_{t^*}$ can be viewed as a function of all the random choices made by the dynamics up to that point. That is, $X_{t^*} = h(X_0,R_1,\ldots,R_{t^*})$ where $R_i$ is a random variable representing the random choices made by the dynamics in step $i$. 
More precisely, $R_i$ represents the realization of the random choice of which node to (attempt to) update and a $Uniform([0,1])$ random variable (based upon which we decide whether or not to update the node's variable).
Hence $f_a(X_{t^*}) = \tilde{f}_{a}(X_0,R_1,\ldots,R_{t^*})$ where $\tilde{f}_a = f_a \circ h$.
Consider the Doob martingale associated with $\tilde{f}_{a}$ defined on the probability space $(O,2^{O},P)$ where $O$ is the set of all possible values of the variables $X_0,X_1,X_2,\ldots,X_{t^*}$ under the above described stochastic process and $P$ is the function which assigns probability to events in $2^O$ according to the underlying stochastic process. Also consider the increasing sequence of sub-$\sigma$-fields $\mathcal{F}_0 = 2^{O_0} \subset \mathcal{F}_1 = 2^{O_1} \subset \mathcal{F}_2 = 2^{O_2} \subset \ldots \mathcal{F}_{t^*} = 2^{O_{t^*}} = 2^O$ where $O_i$ is the set of all possible values of the variables $X_0,X_1,X_2,\ldots,X_i$. The terms in the martingale sequence are as follows. 
\begin{align}
B_0 &= \E[\tilde{f}_{a}(X_0,R_1,\ldots,R_{t^*}) | X_0] = \E\left[ f_a(X_{t^*}) | X_0 \right]\notag \\
&\cdots \notag \\
B_i &= \E[\tilde{f}_{a}(X_0,R_1,\ldots,R_{t^*}) | X_0,R_1,\ldots,R_i] = \E\left[ f_a(X_{t^*}) | X_0,X_1,\ldots,X_i \right] \label{eq:bilinear-doob}\\
&\cdots \notag \\
B_{t^*} &= \tilde{f}_{a}(X_0,R_1,\ldots,R_{t^*}) = \E\left[ f_a(X_{t^*}) | X_0,X_1,\ldots,X_{t^*} \right] = f_a(X_{t^*}) \notag
\end{align}

Since the dynamics are Markovian, we can also write $B_i$ as follows:
\begin{align*}
B_i = \E[f_a(X_{t^*}) | X_i] \quad \forall \: 0 \le i \le t^*.
\end{align*}
\end{definition}
Note that we deliberately choose to skip the term $\E[\tilde{f}_{a}(R_1,\ldots,R_{t^*})]$ and start the martingale sequence at $\E[\tilde{f}_{a}(X_0,R_1,\ldots,R_{t^*}) | X_0]$ instead. This is crucial because it enables us to obtain strong bounds on the martingale increments. We have a good understanding over the behavior of the difference in values of $f(X_{t^*})$ conditioned on $X_i$ versus $X_{i+1}$ but apriori we can't bound $\abss{\E\left[f_a(X_{t^*}) | X_0\right] - \E\left[f_a(X_{t^*})\right]}$.\\

At this point, we could try and apply Azuma's inequality by bounding the martingale increments $\abss{B_{i+1}-B_i}$. However, these increments can be $\Omega(n)$ in magnitude which would yield a radius of concentration of $\approx n^{1.5}$ from Azuma's inequality. As was remarked earlier, this is weak and we will see how we can show a radius of concentration $\approx n$ by harnessing the fact that the martingale increments are rarely, if ever, of the order $\Omega(n)$. This is because of concentration of linear functions on the Ising model. To harness this fact, we appeal to Freedman's inequality (Lemma \ref{lem:freedman}) and the first order of business in applying Freedman's inequality effectively is to define a stopping time on the martingale sequence such that two things hold:
\begin{enumerate}
	\item The stopping time is larger than $t^*$ with high probability. Hence, with a good probability the process doesn't stop too early. The harm if the process stops too early (at $t < t^*$) is that we will not be able to effectively decouple $\E\left[f_a(X_{t}) | X_0\right]$ from the choice of $X_0$. $t^*$ was chosen to be larger than the mixing time of the Glauber dynamics precisely because it allows us to argue that $\E\left[f_a(X_{t^*}) | X_0\right] \approx \E\left[f_a(X_{t^*})\right] = \E[f_a(X)]$.
	\item For all $i+1$ less than the stopping time, $\abss{B_{i+1}-B_i} = O(\sqrt{n})$.
\end{enumerate}

With the above criterion in mind, we define a stopping time $T_K$ on the martingale sequence.
\begin{definition}
	\label{def:stopping-time}
	Consider the martingale sequence defined in Definition~\ref{def:bilinear-doob-martingale}. Define the set $G_K^{a}(t)$ to be the following set of configurations:
	\begin{align}
	&G_K^a(t) = \left\{x_t \in \Omega \: | \:  \abss{\E[f_a^v(X_{t^*})|X_t=x_t]} \le K \text{ and } \abss{\E[f_a^v(X_{t^*-1})|X_t=x_t]} \le K \: \forall \: v \in V \right\} \notag\\
	&~~~~~~~~~~ \bigcap \left\{x_t \in \Omega \: |  \: \Pr\left[ \abss{f_a^v(X_{t^*}) - \E\left[ f_a^v(X_{t^*}) | X_t\right]} > K \middle| X_t=x_t  \right] \le 2\exp\left(-\frac{K^2}{16t^*} \right) \: \forall \: v \in V \right\} \label{eq:bilinear-Gt}\\
	&~~~~~~~~~~ \bigcap \left\{x_t \in \Omega \: | \: \Pr\left[ \abss{f_a^v(X_{t^*-1}) - \E\left[ f_a^v(X_{t^*-1}) | X_t \right]} > K \middle| X_t=x_t  \right] \le 2\exp\left(-\frac{K^2}{16t^*} \right) \: \forall \: v \in V \right\} \notag
	\end{align}
	where $\E[f_a^v(X_{t}) | X_{t_0}]$, for all $v \in V$, is defined as 0 for $t_0 > t$.
	Let $T_K : O \rightarrow \{0\} \bigcup \mathbb{N}$ be a stopping time defined as follows:
	\begin{align*}
	T_K = \min\{t^*+1, \min_{t \ge 0} \left\{ t \:\: \middle| t \notin G_K^a(t) \right\}\},
	\end{align*}
	Note that the event $\{T_K = t\}$ lies in the $\sigma$-field $2^{O_t}$ and hence the above definition is a valid stopping time.
\end{definition}

\subsection{Properties of Linear Functions of the Ising Model}
\label{sec:linear}
In this section, we prove the following lemma, concerned primarily with a particular type of concentration of linear functions on the Ising model.
\begin{lemma}
\label{lem:linear}
Let $X_0$ be a sample from an Ising model $p$ at $\eta$-high temperature with \emph{no external field}, and $X_t$ be obtained by taking $t$ steps along the Glauber dynamics corresponding to $p$ with the condition that the dynamics start at $X_0$. For any linear function $f(x) := \sum_{v \in V} a_v x_v$ such that $\abss{a_v} \le 1$, define $g^t(X_0) = \E[f(X_t)|X_0]$. Then the following hold for any $t \ge 0$,
\begin{eqnarray}
\E[f(X_t)] = 0, \\
\Pr\left[\abss{g^t(X_0)}  > \rad \right] \le 2\exp\left( -\frac{\eta \rad^2}{8n} \right),\\
\Pr\left[\abss{f(X_t) - \E[f(X_t)|X_0]} > K \right] \le 2\exp\left(-\frac{K^2}{4t} \right).
\end{eqnarray}
\end{lemma}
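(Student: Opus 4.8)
The plan is to establish the three claims in order, since they build on one another. For the first claim, $\E[f(X_t)] = 0$, I would argue by symmetry. Because the Ising model $p$ has no external field (all $\theta_v = 0$), the distribution is invariant under the global spin flip $x \mapsto -x$: indeed, $p(x) = p(-x)$ since each term $\theta_{uv} x_u x_v$ is unchanged under negation of all coordinates. This symmetry is preserved by the Glauber dynamics transition kernel (again because $\theta_v = 0$ makes the update probabilities symmetric under flipping all spins). Hence if $X_0 \sim p$, the distribution of $X_t$ is also symmetric under negation, so $\E[f(X_t)] = \E[\sum_v a_v (X_t)_v] = 0$ term by term, as each $\E[(X_t)_v] = 0$.

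For the third claim, I would invoke the Lipschitz Concentration Lemma (Lemma~\ref{lem:lipschitz-lemma}), but applied in the right space. The key observation is that $f(X_t) = f(h(X_0, R_1, \ldots, R_t))$ is a function of the independent random choices $R_1, \ldots, R_t$ (together with the conditioning value $X_0$). Conditioned on $X_0$, changing a single $R_i$ alters the node updated at step $i$ and the threshold, which by the greedy-coupling contraction (Lemma~\ref{lem:greedy-properties}, part 2) propagates to a bounded expected change downstream; more directly, I would bound the per-coordinate Lipschitz constant of $f \circ h$ in the $R_i$ variables. Since each $R_i$ affects at most one coordinate flip, and $f$ has $\abss{a_v} \le 1$ coefficients, the Lipschitz constant in each $R_i$ should be $O(1)$, giving $\sum_i l_i^2 = O(t)$. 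Applying McDiarmid/Azuma to the Doob martingale of $f(X_t)$ in the $R_i$'s (which are independent) then yields the stated bound $2\exp(-K^2/(4t))$, where the constant $4$ reflects that a single flip changes $f$ by at most $2\abss{a_v} \le 2$, so $l_i \le 2$ and $\sum l_i^2 \le 4t$.

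For the second claim I would again use a concentration argument, but now over the randomness of $X_0 \sim p$ while treating $g^t(X_0) = \E[f(X_t) \mid X_0]$ as a deterministic function of $X_0$. The crucial point is that $g^t$ is Lipschitz in $X_0$ with small constants: if $X_0$ and $X_0'$ differ in a single coordinate, then running two greedily-coupled chains from them, the expected Hamming distance contracts as $(1-\eta/n)^t$ (Lemma~\ref{lem:greedy-properties}), so $\abss{g^t(X_0) - g^t(X_0')} = \abss{\E[f(X_t) - f(X_t')]}$ is controlled by the expected number of disagreeing coordinates weighted by the coefficients. Summing the coupling bound $\sum_{t' } (1-\eta/n)^{t'} \le n/\eta$ over the flip, the per-coordinate Lipschitz constant of $g^t$ is $O(1/\eta) \cdot$(something), but the cleaner route is that $g^t$ viewed as a function on the Ising model $X_0 \sim p$ has $\sum_v l_v^2 = O(n)$, so the Lipschitz Concentration Lemma (Lemma~\ref{lem:lipschitz-lemma}) gives $2\exp(-\eta r^2/(8n))$. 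The main obstacle here is getting the Lipschitz constants of $g^t$ exactly right: I must carefully track how a single-coordinate change in $X_0$ propagates through $t$ steps of coupled dynamics, sum the geometric contraction, and verify that the aggregate $\sum_v l_v^2$ is genuinely $O(n)$ rather than something larger, since a loose bound would spoil the $\tilde O(\sqrt n)$ concentration radius that the whole bilinear argument depends on. This coupling-based Lipschitz estimate for $g^t$ is where I expect the real work to lie.
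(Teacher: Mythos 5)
Your proposal follows essentially the same route as the paper's proof for all three claims: zero mean via stationarity/spin-flip symmetry, the Lipschitz concentration lemma (Lemma~\ref{lem:lipschitz-lemma}) applied to $g^t$ as a function of $X_0 \sim p$ for the second claim, and Azuma's inequality on the Doob martingale $D_i = \E[f(X_t)\mid X_0, R_1,\ldots,R_i]$ with increments bounded via the greedy-coupling contraction for the third. The one step you flag as "where the real work lies" is in fact immediate from the contraction you already cite: for two greedily coupled chains started at Hamming distance $1$, Lemma~\ref{lem:greedy-properties} gives $\E[\dh(X_t,X_t')] \le (1-\eta/n)^t \cdot 1 \le 1$, so $\abss{g^t(x)-g^t(x')} \le 2\,\E[\dh(X_t,X_t')] \le 2$ and hence $\sum_v l_v^2 \le 4n$; your alternative of summing the geometric series $\sum_{t'}(1-\eta/n)^{t'} \le n/\eta$ is a detour that would (incorrectly) inflate the Lipschitz constant to $O(n/\eta)$ and ruin the $\tilde O(\sqrt n)$ radius — only the Hamming distance at time $t$ itself matters, not its accumulation over time.
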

\begin{proof}
First, if $X_0 \sim p$, then since $p$ is the stationary distribution of the associated Glauber chain, $X_t \sim p$ as well. Hence, $\E[f(X_t)] = \E[f(X_0)] = \sum_{v \in V} a_v \E[X_{0,v}] = 0$ for all $t \ge 0$.\\

\noindent For showing the second property, we will first bound the Lipschitz constants of the function $g^t(.)$. We denote by $\vec{l}$ the vector of Lipschitz constants of $f(x)$. Since $f$ is a linear function, $\vec{l} = [2\abss{a_1},\ldots,2\abss{a_v},\ldots,2\abss{a_n}]$. We have for any $x,x'$ such that $\dh(x,x') = 1$,
\begin{align}
&\abss{g^t(x = x_1,\ldots,x_i,\ldots,x_n) - g^t(x' = x_1,\ldots,x_i',\ldots,x_n) } = \abss{\E[f(X_t)|X_0=x] - \E[f(X_t') | X_0'=x']} \notag\\
&~~= \abss{\E[f(X_t) - f(X_t') | X_0=x,X_0'=x']} \label{eq:l1}\\
&~~\le \abss{\E[(X_t-X_t') \cdot \vec{l}/2 | X_0=x,X_0'=x']} \notag\\
&~~\le \E\left[ 2\dh(X_t,X_t') | X_0=x,X_0'=x' \right] \label{eq:l2}\\
&~~\le 2. \label{eq:l3}
\end{align}
where (\ref{eq:l1}) holds for any valid coupling of the two chains starting at $X_0$ and $X_0'$ respectively, in particular, we use the greedy coupling (Definition~\ref{def:greedy-coupling}) here. (\ref{eq:l2}) follows because $|a_i| \le 1\ \forall i$, and (\ref{eq:l3}) follows because the expected Hamming distance between $X_t$ and $X_t'$, due to the contracting nature of the Glauber dynamics under the greedy coupling (Lemma~\ref{lem:greedy-properties}), is smaller than $\dh(X_0,X_0')$ which is equal to 1. Also note that $\E[g^t(X_0)] = \E[f(X_t)] = 0$.
Hence, applying Lemma \ref{lem:lipschitz-lemma} to $g^t(x)$, we get
$$\Pr\left[\abss{g^t(X_0)}  > \rad \right] \le 2\exp\left( -\frac{\eta \rad^2}{8n} \right).$$
Note that we could apply Lemma \ref{lem:lipschitz-lemma} to $g^t(.)$ because $X_0$ was drawn from the stationary distribution of the Glauber dynamics.\\

To show the third property, we will define a martingale similar to the one defined in Definition~\ref{def:bilinear-doob-martingale} and apply Azuma's inequality to it. 
Consider a run of the Glauber dynamics starting at $X_0 \sim p$ and running for $t$ steps. We will view $X_t$ as a function of all the random choices made by the dynamics up to step $t^*$. That is, $X_t = h(X_0,R_1,\ldots,R_t)$ where $R_i$ denotes the random choices made by the dynamics during step $i$. 
More precisely, $R_i$ represents the realization of the random choice of which node to (attempt to) update and a $Uniform([0,1])$ random variable (based upon which we decide whether or not to update the node's variable).
Hence $f(X_t) = \tilde{f}(X_0,R_1,\ldots,R_t)$ where $\tilde{f} = f \circ h$.
Consider the Doob martingale defined on $\tilde{f}$:
\begin{align}
D_0 &= \E[\tilde{f}(X_0,R_1,\ldots,R_t) | X_0]\notag \\
&\cdots \notag \\
D_i &= \E[\tilde{f}(X_0,R_1,\ldots,R_t) | X_0,R_1,\ldots,R_i] \label{eq:linear-doob}\\
&\cdots \notag \\
D_{t} &= \tilde{f}(X_0,R_1,\ldots,R_t) \notag
\end{align}

Since the dynamics are Markovian, we can also write $D_i$ as follows:
\begin{align*}
D_i = \E[f(X_t) | X_i].
\end{align*}
Next we will bound the increments of the above martingale and apply Azuma's inequality to get the desired tail bound.
In the following calculation, we will use the notation $x \rightarrow y$ where $x,y \in \{\pm 1\}^n$, to denote that $y$ is a possible transition according to a single step of the dynamics starting from $x$.
For any $0 \le i \le t-1$,
	\begin{align}
	&\abss{D_{i+1}-D_i} = \abss{\E\left[f(X_t) | X_{i+1}\right] - \E\left[f(X_t) | X_{i}\right]} \\ 
	&~~\le \max_{x,y: \dh(x,y)=1} \abss{\E\left[f(X_t) | X_{i+1}=x\right] - \E\left[f(X_t') | X_{i}'=y\right]} \label{eq:l4}\\
	&~~= \max_{x,y: \dh(x,y)=1} \abss{ \E\left[f(X_t) | X_{i+1}=x\right] - \sum_{y' : y \rightarrow y'} \Pr[y \rightarrow y']\E\left[f(X_t') | X_{i+1}'=y'\right]} \label{eq:l5}\\
	&~~\le \max_{x,y': \dh(x,y')\le 2} \abss{ \E\left[f(X_t) | X_{i+1}=x\right] - \E\left[f(X_t') | X_{i+1}'=y'\right]} \label{eq:l6}\\
	&~~= \max_{x,y' : \dh(x,y') \le 2} \abss{\E\left[f(X_t) - f(X_t') | X_{i+1} = x, X_{i+1}'=y'\right]} \label{eq:l7}\\
	&~~= \max_{x,y' : \dh(x,y') \le 2} \abss{\E\left[\sum_v a_{v}(X_{t,v}-X_{t,v}') | X_{i+1} = x, X_{i+1}'=y'\right]} \notag\\
	&~~\le \max_{x,y' : \dh(x,y') \le 2} \E\left[2\dh(X_t,X_t') | X_{i+1} = x, X_{i+1}'=y'\right] \le 2. \label{eq:l8}
	\end{align}
	where in (\ref{eq:l4}) we relabeled the variables in the second expectation to avoid notational confusion in the later steps of our bounding, maintaining the understanding that the sequence $\{X_i',Y_i'\}_i$ has the same distribution as $\{X_i,Y_i\}_i$., (\ref{eq:l7}) holds for any valid coupling of the $X_i$ and $X_i'$ chains, in particular, it holds for the greedy coupling between the runs (Definition~\ref{def:greedy-coupling}). (\ref{eq:l8}) follows from the condition $\abss{a_{v}} \le 1$ and the contracting nature of the Glauber dynamics (Lemma \ref{lem:greedy-properties}) under the greedy coupling.
	
	Hence, for all $0 \le i \le t-1$, $\abss{D_{i+1}-D_i} \le 2$. Azuma's inequality applied on the martingale sequence $\{D_i\}_{i \ge 0}$ yields
	\begin{align*}
	&\Pr\left[ \abss{D_t - D_0} > K \right] \le 2\exp\left( -\frac{K^2}{4t} \right) \\
	\implies &\Pr\left[ \abss{f(X_t)-\E[f(X_t)|X_0]} > K \right] \le 2\exp\left( -\frac{K^2}{4t} \right)
	\end{align*}
	
\end{proof}

\subsection{Showing that the Martingale Process Doesn't Stop Too Early}
As a consequence of Lemma \ref{lem:linear}, we can show that with sufficiently large probability the stopping time $T_K$ defined above is larger than $t^*$, and thus Freedman's inequality gives guarantees for all $t$ up to $t^*$ with high probability. The main lemma will be the following one, which shows that for any $t$, $X_t \in G_K^a(t)$ with high probability. 
\begin{lemma}
	\label{lem:goodset-largeprob}
	For any $t \ge 0$, for $t^* = 3\tmix$,
	$$\Pr\left[X_t \notin G_K^a(t)\right]\le 8n\exp\left( -\frac{K^2}{8t^*} \right).$$
\end{lemma}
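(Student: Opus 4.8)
The plan is to union-bound the failure probability over the $n$ vertices and over the (at most) four constraints defining $G_K^a(t)$, controlling each individual failure via the linear-function estimates of Lemma~\ref{lem:linear}. The enabling observation is that, since $X_0 \sim p$ is stationary, $X_t \sim p$ for every $t$, and by the Markov property the conditional law of $X_{t^*}$ given $X_t$ is exactly that of the Glauber dynamics run for $t^*-t$ steps from $X_t$. Thus, for each fixed $v$, the linear function $f_a^v(x)=\sum_{u\ne v}a_{uv}x_u$ (coefficients in $[-1,1]$), together with the stationary starting sample $X_t$ and step count $t^*-t$ (resp.\ $t^*-1-t$ for the $X_{t^*-1}$ constraints), satisfies the hypotheses of Lemma~\ref{lem:linear}. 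I restrict to $0\le t\le t^*$, the only range relevant to $T_K$ (capped at $t^*+1$) and the only range in which the constraints are non-trivial; at the boundary values the convention that $\E[f_a^v(X_{t'})\mid X_t]=0$ for $t>t'$ makes the affected constraints hold automatically, while the step count $t^*-t=0$ at $t=t^*$ is still covered by Lemma~\ref{lem:linear}.

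For the first pair of constraints (bounds on the conditional expectations), I identify $\E[f_a^v(X_{t^*})\mid X_t]$ with the quantity ``$g^{t^*-t}(X_t)$'' of Lemma~\ref{lem:linear} and apply its second conclusion, obtaining for each $v$
\[
\Pr\left[\abss{\E[f_a^v(X_{t^*})\mid X_t]} > K\right] \le 2\exp\left(-\frac{\eta K^2}{8n}\right) \le 2\exp\left(-\frac{K^2}{8t^*}\right),
\]
where the last step uses $t^* = 3n\log n/\eta$ together with $\log n \ge 1/3$; the $X_{t^*-1}$ term is identical.

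The delicate constraints are the two involving an inner conditional probability, e.g.\ $\Pr[\abss{f_a^v(X_{t^*}) - \E[f_a^v(X_{t^*})\mid X_t]} > K \mid X_t] \le 2\exp(-K^2/(16t^*))$. Writing $h_v(X_t)$ for this inner probability, the key point is that its expectation over $X_t\sim p$ is precisely the unconditional deviation probability, which the third conclusion of Lemma~\ref{lem:linear} bounds by $2\exp(-K^2/(4(t^*-t)))\le 2\exp(-K^2/(4t^*))$. Markov's inequality against the threshold then yields
\[
\Pr\left[h_v(X_t) > 2\exp\left(-\frac{K^2}{16t^*}\right)\right] \le \frac{2\exp(-K^2/(4t^*))}{2\exp(-K^2/(16t^*))} = \exp\left(-\frac{3K^2}{16t^*}\right) \le \exp\left(-\frac{K^2}{8t^*}\right),
\]
and identically for the $X_{t^*-1}$ version. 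Summing the four per-vertex failure probabilities, each at most $2\exp(-K^2/(8t^*))$, over the $n$ vertices gives exactly $8n\exp(-K^2/(8t^*))$.

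I expect the main obstacle to be the two-level probabilistic structure of the last two constraints: they demand that an \emph{inner} conditional tail probability (over the future dynamics) be small at the realized $X_t$, so one must bound the measure of bad starting configurations rather than a single tail. The resolution is the averaging-plus-Markov step above, and it closes only because the constraint threshold carries the constant $16$ in its exponent while Lemma~\ref{lem:linear} supplies the constant $4$: this deliberate gap leaves the surplus exponent $3K^2/(16t^*)$, which dominates the target $K^2/(8t^*)$. The rest is bookkeeping: checking $t^*-t\le t^*$ so each step-count substitution only strengthens the exponent, and confirming the degenerate endpoints are handled by the convention.
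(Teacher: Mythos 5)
Your proposal is correct and follows essentially the same route as the paper's proof: stationarity of $X_t$, the conditional-expectation and deviation bounds of Lemma~\ref{lem:linear}, a Markov-inequality step to control the two-level (inner conditional probability) constraints, and a union bound over vertices and constraints. The only cosmetic difference is that you apply Markov's inequality per vertex against the threshold $2\exp\left(-\frac{K^2}{16t^*}\right)$, whereas the paper applies it once to the union event $D_K^a(t)$ against the stricter threshold $\exp\left(-\frac{K^2}{8t^*}\right)$; both yield the stated bound $8n\exp\left(-\frac{K^2}{8t^*}\right)$.
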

\begin{proof}
	Since $X_0$ is a sample from the stationary distribution $p$ of the dynamics, it follows from the property of stationary distributions that $X_t$ is also a sample from $p$. Hence we have, from Lemma \ref{lem:linear}, and a union bound, that
	\begin{align}
	\Pr\left[ \exists \: v \in V \: \text{s.t. } \max\left\{\abss{\E\left[ f_a^v(X_{t^*}) | X_t \right]}, \abss{\E\left[ f_a^v(X_{t^*-1}) | X_t \right]}\right\} > K \right] \le 4n\exp\left(  -\frac{\eta K^2}{8n}\right). \label{eq:bilinear-gt1}
	\end{align}
	Let $D_K^a(t)$ be the event defined as 
	$$D_K^a(t) =\{ \exists \: v \in V \text{ s.t. }  \max\left\{\abss{f_a^v(X_{t^*}) - \E\left[ f_a^v(X_{t^*}) | X_t \right]}, \abss{f_a^v(X_{t^*-1}) - \E\left[ f_a^v(X_{t^*-1}) | X_t \right]} \right\} > K \}.$$
	From Lemma \ref{lem:linear}, and a union bound, we have,
	\begin{align}
	&\Pr\left[ D_K^a(t) \right] = \E\left[ \Pr\left[ D_K^a(t) | X_t \right] \right] \le 4n\exp\left(-\frac{K^2}{4t^*}\right) \notag \\
	\implies & \Pr\left[ \Pr\left[ D_K^a(t) | X_t \right] > \exp\left(-\frac{K^2}{8t^*}\right) \right] \le 4n\exp\left(-\frac{K^2}{8t^*}\right) \label{eq:bilinear-gt2}
	\end{align}
	where (\ref{eq:bilinear-gt2}) follows from a simple application of Markov's inequality.
	Hence,
	\begin{align}
	&\Pr\left[ \exists \: v \in V \text{ s.t. } \max\left\{\Pr\left[\abss{f_a^v(X_{t^*}) - \E\left[ f_a^v(X_{t^*}) | X_t \right]} > K\right], \Pr\left[\abss{f_a^v(X_{t^*-1}) - \E\left[ f_a^v(X_{t^*-1}) | X_t \right]} > K\right] \right\} > \exp\left(-\frac{K^2}{8t^*}\right) \right] \notag\\
	& \le \Pr\left[ \Pr\left[ D_K^a(t) | X_t \right] > \exp\left(-\frac{K^2}{8t^*}\right) \right] \le 4n\exp\left(-\frac{K^2}{8t^*}\right) \label{eq:bilinear-gt3}.
	\end{align}
	From (\ref{eq:bilinear-gt1}) and (\ref{eq:bilinear-gt3}), we have,
	\begin{align*}
	& \Pr\left[ X_t \notin G_K^a(t) \right] \le 4n\exp\left(-\frac{K^2}{8t^*} \right) +  4n\exp\left(-\frac{\eta K^2}{8n} \right) \le 8n\exp\left( -\frac{K^2}{8t^*} \right).
	\end{align*}
\end{proof}

Given Lemma~\ref{lem:goodset-largeprob}, the proof of the stopping time being large with high probability follows by a simple application of the union bound.
\begin{lemma}
	\label{lem:bilinear-stopping-time-large}
	For $t^* = 3\tmix$,
	$$\Pr\left[ t^* \ge T_K \right] \le 8nt^*\exp\left(-\frac{K^2}{8t^*} \right).$$
\end{lemma}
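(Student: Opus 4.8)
The plan is to reduce the statement to a union bound over the per-time-step guarantee already established in Lemma~\ref{lem:goodset-largeprob}. First I would unpack the definition of the stopping time $T_K$ from Definition~\ref{def:stopping-time}. Since $T_K = \min\{t^*+1,\ \min_{t \ge 0}\{t : X_t \notin G_K^a(t)\}\}$, the event $\{t^* \ge T_K\}$ is exactly the event that the process exits the good set at some time no later than $t^*$; that is,
$$\{t^* \ge T_K\} = \bigcup_{t=0}^{t^*} \{X_t \notin G_K^a(t)\}.$$
Indeed, if $T_K \le t^*$ then the inner minimum defining $T_K$ is attained at some $t \le t^*$, so some $X_t$ leaves $G_K^a(t)$; conversely, any such exit forces the inner minimum, and hence $T_K$, to be at most $t^*$.

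Then I would apply the union bound across these events, giving
$$\Pr[t^* \ge T_K] \le \sum_{t=0}^{t^*} \Pr[X_t \notin G_K^a(t)].$$
Each summand is controlled directly by Lemma~\ref{lem:goodset-largeprob}, which furnishes the bound $\Pr[X_t \notin G_K^a(t)] \le 8n \exp\!\left(-K^2/(8t^*)\right)$ uniformly in $t$. Because this estimate does not depend on $t$, summing over the relevant range immediately produces a prefactor linear in the number of time steps, yielding the claimed $8nt^*\exp\!\left(-K^2/(8t^*)\right)$.

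There is essentially no technical obstacle in this lemma on its own: all of the real work is hidden inside Lemma~\ref{lem:goodset-largeprob}, whose \emph{uniform-in-$t$} tail bound is precisely what makes the union bound collapse so cleanly. The only point requiring a moment of care is the translation between the stopping-time event $\{t^* \ge T_K\}$ and the union of bad-set events $\{X_t \notin G_K^a(t)\}$, which hinges on reading off the nested-$\min$ structure of $T_K$ correctly; once that identity is in place the conclusion is immediate. A secondary, purely bookkeeping matter is matching the stated prefactor $8nt^*$ against the naive count of $t^*+1$ summands, which is settled by restricting the union to the times at which the stopping rule can actually fire (or by absorbing the harmless additive constant), and does not affect the substance of the argument.
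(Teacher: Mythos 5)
Your proposal is correct and follows exactly the paper's own argument: identify $\{t^* \ge T_K\}$ with $\bigcup_{t=0}^{t^*}\{X_t \notin G_K^a(t)\}$ via the definition of the stopping time, apply a union bound, and invoke the uniform-in-$t$ tail estimate of Lemma~\ref{lem:goodset-largeprob}. Your observation about the $t^*+1$ versus $t^*$ prefactor is a minor bookkeeping point that the paper itself glosses over in the same way.
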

\begin{proof}
	From Lemma \ref{lem:goodset-largeprob}, we have
	\begin{align*}
	& \Pr\left[ X_t \notin G_K^a(t) \right] \le 8n\exp\left( -\frac{K^2}{8t^*} \right) \\
	\implies & \Pr[t^* \ge T_K] = \Pr\left[ \bigcup_{t=0}^{t^*} X_t \notin G_K^a(t) \right] \\
	& \le \sum_{t=0}^{t^*}\Pr\left[X_t \notin G_K^a(t) \right] \le 8t^* n\exp\left(-\frac{K^2}{8t^*} \right) \\
	&= 8nt^*\exp\left(-\frac{K^2}{8t^*} \right).
	\end{align*}
\end{proof}

\subsection{Bounding the Martingale Differences}
\label{sec:martingale-diff}

In this section, we prove a bound on the increments $B_{i+1}-B_i$ for the Doob martingale defined in Definition~\ref{def:bilinear-doob-martingale} which holds with probability 1 for all $i+1 < T_K$. We begin by showing a bound which holds pointwise when $X_i \in G_K^a(i)$ and $X_{i+1} \in G_K^a(i+1)$ in the form of Lemma \ref{lem:bilinear-martingale-increment-bound}.
\begin{lemma}
	\label{lem:bilinear-martingale-increment-bound}
	Consider the Doob martingale defined in Definition~\ref{def:bilinear-doob-martingale}. Suppose $X_{i} \in G_K^a(i)$ and $X_{i+1} \in G_K^a(i+1)$. Then
	\begin{align*}
	\abss{B_{i+1} - B_i} \le 16K + 16n^2\exp\left(-\frac{K^2}{16t^*} \right).
	\end{align*}
\end{lemma}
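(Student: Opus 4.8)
The plan is to bound the martingale increment $\abss{B_{i+1}-B_i} = \abss{\E[f_a(X_{t^*})|X_{i+1}] - \E[f_a(X_{t^*})|X_i]}$ by relating it to the linear functions $f_a^v$ that appear in Claim~\ref{clm:glauber-step-characterization}. First I would recall that the step from $X_i$ to $X_{i+1}$ is a single Glauber update at some randomly chosen node, so $X_{i+1}$ and $X_i$ differ in at most one coordinate. The difference $B_{i+1}-B_i$ measures how much conditioning on the extra information in step $i+1$ shifts the conditional expectation of $f_a(X_{t^*})$. The natural way to control this is to couple, via the greedy coupling of Definition~\ref{def:greedy-coupling}, two runs of the dynamics started from $X_{i+1}$ and from a neighbor configuration, and to track how $f_a$ evolves. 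Since $f_a$ is bilinear and the single coordinate that can differ is some node $w$, the change in $f_a$ across a flip of $X_w$ is exactly $2X_w f_a^w(X) = \pm 2 f_a^w(X)$, which is a \emph{linear} function — precisely the objects we have just learned to control in Lemma~\ref{lem:linear}.

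The key steps, in order, would be: (1) Write $B_{i+1}-B_i$ as a single expectation of $f_a(X_{t^*}) - f_a(\tilde X_{t^*})$ under a greedy coupling of two chains, one conditioned on $X_{i+1}$ and one on $X_i$ (suitably relabeled as in the proof of Lemma~\ref{lem:linear}), so that at time $i+1$ the two coupled configurations differ in only the single node $w$ that was updated. (2) Telescope or directly expand the bilinear $f_a(X_{t^*})-f_a(\tilde X_{t^*})$ in terms of the coordinates where the two coupled chains disagree; because each disagreement contributes through a coefficient times a product of spins, and because the greedy coupling is contracting (Lemma~\ref{lem:greedy-properties}), the expected number of disagreeing coordinates decays. (3) Split the contribution into a ``typical'' part, where the relevant linear statistics $f_a^v(X_{t^*})$ and $f_a^v(X_{t^*-1})$ are bounded by $K$ — which is guaranteed because we assume $X_i, X_{i+1} \in G_K^a$, and the definition of $G_K^a$ in Definition~\ref{def:stopping-time} packages exactly the two conditions $\abss{\E[f_a^v(X_{t^*})|X_t]}\le K$ and the tail bound $\Pr[\abss{f_a^v - \E[f_a^v|X_t]}>K]\le 2\exp(-K^2/16t^*)$ — and a ``rare'' part where these deviate. (4) On the typical part each linear statistic is at most $K$ in magnitude, and the Glauber dynamics only touches one node per step, yielding a contribution of order $K$; the constant $16K$ absorbs the bookkeeping from the at-most-two disagreeing nodes at time $i+1$, the factor of $2$ in $2f_a^w$, and the two time indices $t^*$ and $t^*-1$. (5) On the rare part, bound $\abss{f_a^v}$ trivially by $n$ (there are at most $n$ terms each with coefficient at most $1$), and multiply by the probability bound $2\exp(-K^2/16t^*)$ from the $G_K^a$ membership, together with a factor of $n$ from union-bounding over which node is responsible; this gives the $16n^2\exp(-K^2/16t^*)$ term.

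The main obstacle I expect is step (2)–(3): cleanly isolating \emph{which} linear functions $f_a^v$ control the increment and arguing that the greedy coupling keeps the disagreement confined so that only $O(1)$ of the linear statistics matter, rather than a growing number. The subtlety is that although the two chains agree at node $w$ initially differ in one coordinate, the coupling can create further disagreements downstream, so one must argue that the \emph{expected} effect on $f_a(X_{t^*})$ remains $O(K)$ rather than $O(nK)$. This is where the structure of $G_K^a$ — simultaneously controlling the conditional expectations $\E[f_a^v(X_{t^*})|X_t]$ and the concentration of $f_a^v(X_{t^*})$ around them for \emph{all} $v$ — is essential: it lets us condition on the ``good'' event that all $n$ linear statistics are near their bounded conditional means, converting the potentially $\Omega(n)$-magnitude increment into one of magnitude $O(K) = \tilde O(\sqrt n)$. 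Getting the exponent $K^2/16t^*$ to match exactly (rather than $K^2/8t^*$ or similar) will require care in tracking which of the two time indices $t^*$ or $t^*-1$ is used when the coupling decouples, since the increment at step $i+1$ sees the statistic evaluated at the future time $t^*$.
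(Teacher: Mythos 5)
Your proposal follows essentially the same route as the paper's proof: relabel and greedily couple the chains started from $X_{i+1}$ and $X_i$, expand the bilinear difference $f_a(X_{t^*})-f_a(\tilde X_{t^*})$ into disagreement coordinates times the linear statistics $f_a^v$, use the two conditions packaged in $G_K^a$ to bound those statistics by $2K$ except on an event of probability $2\exp(-K^2/16t^*)$ (where the trivial $O(n^2)$ bound applies), and invoke Hamming contraction so the expected number of disagreements stays $O(1)$. The one detail you leave vague --- how to align the two chains' step counts --- is resolved in the paper by adding and subtracting $\E\left[f_a(X'_{t^*-1}) \middle| X'_i\right]$, so that both coupled runs take the same number of steps and a separate one-extra-step term appears; this is precisely why $G_K^a$ also carries the $t^*-1$ conditions you noticed.
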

\begin{proof}
	For ease of exposition, we will refer to $G_K^a(i)$ as simply $G_i$ in the following proof.
	\begin{align}
	&\abss{B_{i+1}-B_i} = \abss{\E\left[f_a(X_{t^*}) \middle| X_{i+1}\right] - \E\left[f_a(X_{t^*}) \middle| X_{i}\right]} \\
	&~~ = \abss{\E\left[f_a(X_{t^*}) \middle| X_{i+1}\right] - \E\left[f_a(X_{t^*}') \middle| X_{i}'\right]} \label{eq:md1}\\
	&~~\le \abss{\E\left[f_a(X_{t^*}) \middle| X_{i+1} \right] - \E\left[f_a(X_{t^*-1}') \middle| X_{i}'\right]} + \abss{\E\left[f_a(X_{t^*-1}') \middle| X_{i}'\right] - \E\left[f_a(X_{t^*}') \middle| X_{i}'\right]}\label{eq:md2}\\
	&~~\le \abss{\E\left[f_a(X_{t^*}) - f_a(X_{t^*-1}') \middle| X_{i+1}, X_{i}'\right]} + \abss{\E\left[ f_a(X_{t^*}') - f_a(X_{t^*-1}') \middle| X_i' \right]}  \label{eq:md3}\\
	&~~= \left\lvert \E\left[\sum_v (X_{t^*,v}-X_{t^*-1,v}') \left(\sum_{u} a_{uv}X_{t^*,u}\right) +  \sum_u (X_{t^*,u}-X_{t^*-1,u}')\left(\sum_{v} a_{uv} X_{t^*-1,v}'\right) \middle| X_{i+1}, X_{i}'\right] \right\rvert \notag \\
	&~~~~~+ \abss{\E\left[ f_a(X_{t^*}') - f_a(X_{t^*-1}') \middle| X_i' \right]} \\
	&~~\le \E\left[\sum_v \abss{X_{t^*,v}-X_{t^*-1,v}'}\abss{\sum_{u} a_{uv} X_{t^*,u}} + \sum_u \abss{X_{t^*,u}-X_{t^*-1,u}'}\abss{\sum_{v} a_{uv} X_{t^*-1,v}'} \:\: \middle| X_{i+1}, X_{i}'\right] \notag \\
	&~~~~~+ \abss{\E\left[ f_a(X_{t^*}') - f_a(X_{t^*-1}') \middle| X_i' \right]} \label{eq:md5}
	\end{align}
	where in (\ref{eq:md1}) we relabeled the variables in the second expectation to avoid notational confusion in the later steps of our bounding, maintaining the understanding that the sequence $\{X_i',Y_i'\}_i$ has the same distribution as $\{X_i,Y_i\}_i$, in (\ref{eq:md2}) we added and subtracted the term $\E[f_a(X_{t^*-1}') | X_i']$, (\ref{eq:md3}) holds for any valid coupling of the two chains, one starting at $X_{i+1}$ and the other starting at $X_i'$, and both running for $t^*-1-i$ steps. In particular, we use the greedy coupling between these two runs (Definition~\ref{def:greedy-coupling}). 
	Consider the first term in (\ref{eq:md5}).
	Since $X_{i+1} \in G_{i+1}$, we have
	\begin{align}
	&\abss{\E\left[\sum_u a_{uv}X_{t^*,u} \middle| X_{i+1} \right]} \le K \text{ and} \label{eq:md6} \\
	&\Pr\left[ \abss{\sum_u a_{uv}X_{t^*,u} - \E\left[\sum_u a_{uv}X_{t^*,u} \middle| X_{i+1} \right]} > K \middle| X_{i+1} \right] \le 2\exp\left(-\frac{K^2}{16t^*} \right). \label{eq:md7}\\
	\implies &\Pr\left[ \abss{\sum_u a_{uv}X_{t^*,u}} > 2K \middle| X_{i+1}\right] \le 2\exp\left(-\frac{K^2}{16t^*} \right). \label{eq:md8}
	\end{align}
	where (\ref{eq:md6}) and (\ref{eq:md7}) together imply (\ref{eq:md8}).
	Similarly, we also get that 
	\begin{align}
	\Pr\left[ \abss{\sum_v a_{uv}X_{t^*-1,v}'} > 2K \middle| X_i'\right] \le 2\exp\left(-\frac{K^2}{16t^*} \right). \label{eq:md11}
	\end{align}
	
	Since $\sum_v \abss{X_{t^*,v}-X_{t^*-1,v}'}\abss{\sum_{u} a_{uv} X_{t^*,u}} \le 2n^2$ for all $X_{t^*},X_{t^*-1}'$, and since $\sum_v \abss{X_{t^*,v}-X_{t^*-1,v}'} = 2\dh(X_{t^*},X_{t^*-1}')$, (\ref{eq:md8}) and (\ref{eq:md11}) imply that
	\begin{align}
	&\E\left[\sum_v \abss{X_{t^*,v}-X_{t^*-1,v}'}\abss{\sum_{u} a_{uv} X_{t^*,u}} + \sum_u \abss{X_{t^*,u}-X_{t^*-1,u}'}\abss{\sum_{v} a_{uv} X_{t^*-1,v}'} \:\: \middle| X_{i+1}, X_{i}'\right] \\
	&\le \E\left[4\dh(X_{t^*},X_{t^*-1}')K +  4\dh(X_{t^*},X_{t^*-1}')K \middle| X_{i+1}, X_{i}'\right] + 8n^2\exp\left(-\frac{K^2}{16t^*} \right)\label{eq:md9} \\
	&\le 8K + 8n^2\exp\left(-\frac{K^2}{16t^*} \right) ,\label{eq:md10}
	\end{align}
	where (\ref{eq:md10}) follows because of the contracting nature of Hamming distance under the greedy coupling of the Glauber dynamics (Lemma~\ref{lem:greedy-properties}).
	
	The same bound can be proven by following the same steps for the second term in (\ref{eq:md5}), completing the proof.
\end{proof}

As a consequence of Lemma \ref{lem:bilinear-martingale-increment-bound}, we get the following two useful corollaries.
\begin{corollary}
	\label{cor:martingale-increment-prob-bound}
	Consider the martingale sequence defined in Definition~\ref{def:bilinear-doob-martingale}. 
	$$\Pr\left[\forall \: 0 < i+1 < T_K, \: \abss{B_{i+1}-B_i} \le 16K+16n^2\exp\left(-\frac{K^2}{16t^*}\right) \right] = 1.$$
\end{corollary}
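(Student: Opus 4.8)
The plan is to recognize that this corollary is a purely pathwise (deterministic) consequence of Lemma~\ref{lem:bilinear-martingale-increment-bound}, so that no genuinely new probabilistic work is required beyond carefully unpacking the definition of the stopping time. First I would recall from Definition~\ref{def:stopping-time} that $T_K$ is the first time $t$ at which the current configuration leaves its good set, that is, the first $t$ with $X_t \notin G_K^a(t)$ (capped at $t^*+1$). The immediate structural consequence is that along every sample path, and for every time $t$ with $t < T_K$, we are guaranteed $X_t \in G_K^a(t)$, simply because no state prior to the first exit time has yet left its good set.

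Next I would fix an arbitrary outcome of the underlying probability space $(O, 2^O, P)$ together with an arbitrary index $i$ for which $0 < i+1 < T_K$ holds on that outcome. The key observation is that the single inequality $i+1 < T_K$ already forces $i < i+1 < T_K$, so both $i$ and $i+1$ are strictly smaller than the exit time $T_K$. By the consequence noted above, this yields $X_i \in G_K^a(i)$ and $X_{i+1} \in G_K^a(i+1)$ \emph{simultaneously}, which is precisely the hypothesis of Lemma~\ref{lem:bilinear-martingale-increment-bound}. Invoking that lemma then gives
$$\abss{B_{i+1}-B_i} \le 16K + 16n^2\exp\left(-\frac{K^2}{16t^*}\right)$$
for this particular $i$ on this particular outcome.

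Finally, since the derivation above applies on every outcome and for every index $i$ with $0 < i+1 < T_K$, the event described in the corollary holds surely, hence with probability $1$. The only step that requires genuine care — and thus the main (indeed the only) obstacle — is the bookkeeping that a single bound $i+1 < T_K$ delivers membership in the good set at \emph{both} consecutive times $i$ and $i+1$; this is exactly what lets us apply the pointwise increment bound of Lemma~\ref{lem:bilinear-martingale-increment-bound}, and it follows directly from $T_K$ being a first-exit time rather than from any additional estimate.
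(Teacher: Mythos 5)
Your proposal is correct and follows essentially the same route as the paper's proof: both arguments hinge on exactly the two facts you identify, namely that the first-exit-time definition of $T_K$ guarantees $X_i \in G_K^a(i)$ and $X_{i+1} \in G_K^a(i+1)$ simultaneously whenever $i+1 < T_K$, and that Lemma~\ref{lem:bilinear-martingale-increment-bound} then bounds the increment pointwise. The paper merely phrases this as a complement-and-case-split probability computation, whereas you argue pathwise on each outcome; the content is identical.
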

\begin{proof}
	Let $\kappa = 16K + 16n^2\exp\left(-\frac{K^2}{16t^*}\right)$.
	\begin{align}
	&~\Pr\left[\forall \: 0 < i+1 < T_K, \: \abss{B_{i+1}-B_i} \le \kappa \right] \notag\\
	&=1 - \Pr\left[\exists \: 0 < i+1 < T_K, \: \abss{B_{i+1}-B_i} > \kappa \right] \notag\\
	&=1 - \Pr\left[\exists \: 0 < i+1 < T_K, \: \left(X_i \in G_K^a(i), X_{i+1}\in G_K^a(i+1) \text{ and } \abss{B_{i+1}-B_i} > \kappa\right) \right. \notag\\
	&~~~\left. \text{ or }  \left(\left(X_i \notin G_K^a(i) \text{ or } X_{i+1}\notin G_K^a(i+1)\right) \text{ and } \abss{B_{i+1}-B_i} > \kappa\right)\right] \notag\\
	&= 1- \Pr\left[\exists \: 0 < i+1 < T_K, \: \left(X_i \in G_K^a(i), X_{i+1}\in G_K^a(i+1) \text{ and } \abss{B_{i+1}-B_i} > \kappa\right) \right] \label{eq:pr-one3}\\
	& = 1-0 \label{eq:pr-one4}
	\end{align}
	where (\ref{eq:pr-one3}) follows because by the definition of $T_K$, $\forall \: 0 < i+1 < T_K, \: \left(X_i \in G_K^a(i) \text{ and } X_{i+1}\notin G_K^a(i+1)\right)$, and (\ref{eq:pr-one4}) follows because $X_i \in G_K^a(i), X_{i+1}\in G_K^a(i+1) \implies \abss{B_{i+1}-B_i} \le \kappa$ (Lemma \ref{lem:bilinear-martingale-increment-bound}).
\end{proof}
Corollary \ref{cor:martingale-increment-prob-bound}, will give us one of the required conditions to apply Freedman's inequality.

As a corollary of Lemma \ref{lem:bilinear-martingale-increment-bound}, we get a bound on the variance of the martingale differences which holds with high probability.
To show it we first show Claim~\ref{clm:nice-sets} which states that, informally, for any time step $i$, with a large probability we hit an $X_i$ such that the probability of transitioning from $X_i$ to an $X_{i+1} \in G_K^a(i+1)$ is large.
\begin{claim}
	\label{clm:nice-sets}
	Denote by $N_K^a(i)$ the following set of configurations:
	\begin{align}
	N_K^a(i) = \left\{ x_i \in \Omega \middle| \Pr\left[X_{i+1} \notin G_K^a(i+1) \middle| X_i = x_i \right] \le \exp\left( -\frac{K^2}{16t^*} \right) \right\}.
	\end{align}
	Then,
	$$\Pr\left[X_i \notin N_K^a(i)\right] \le 8n\exp\left(-\frac{K^2}{16t^*}\right).$$
\end{claim}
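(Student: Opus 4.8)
The plan is to bound $\Pr[X_i \notin N_K^a(i)]$ by relating membership in $N_K^a(i)$ to the probability estimate already established in Lemma~\ref{lem:goodset-largeprob}. The key observation is that $X_i \notin N_K^a(i)$ means $\Pr[X_{i+1} \notin G_K^a(i+1) \mid X_i] > \exp(-K^2/16t^*)$, i.e.\ from the configuration $X_i$ the chain has an unusually large probability of landing outside the good set at the next step. I would control the aggregate probability of this bad one-step transition event by first noting that, since $X_0 \sim p$ is drawn from the stationary distribution, $X_i \sim p$ and $X_{i+1} \sim p$ as well, so Lemma~\ref{lem:goodset-largeprob} applies at time $i+1$ and gives $\Pr[X_{i+1} \notin G_K^a(i+1)] \le 8n\exp(-K^2/8t^*)$.

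The main step is a Markov-inequality argument on the conditional probability $g(x_i) := \Pr[X_{i+1} \notin G_K^a(i+1) \mid X_i = x_i]$. First I would write
\begin{align*}
\Pr[X_{i+1} \notin G_K^a(i+1)] = \E[g(X_i)] \le 8n\exp\left(-\frac{K^2}{8t^*}\right),
\end{align*}
using the tower rule and Lemma~\ref{lem:goodset-largeprob}. Then, since $X_i \notin N_K^a(i)$ is exactly the event $\{g(X_i) > \exp(-K^2/16t^*)\}$, Markov's inequality applied to the nonnegative random variable $g(X_i)$ yields
\begin{align*}
\Pr[X_i \notin N_K^a(i)] = \Pr\left[g(X_i) > \exp\left(-\frac{K^2}{16t^*}\right)\right] \le \frac{\E[g(X_i)]}{\exp(-K^2/16t^*)} \le 8n\exp\left(-\frac{K^2}{8t^*}\right)\exp\left(\frac{K^2}{16t^*}\right).
\end{align*}
Collecting the exponents gives $8n\exp(-K^2/16t^*)$, which is exactly the claimed bound. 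This is the same Markov-inequality-on-a-conditional-probability device used to pass from \eqref{eq:bilinear-gt2} to the definition of $G_K^a$ in Lemma~\ref{lem:goodset-largeprob}, so the structure should carry over cleanly.

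I expect the only delicate point to be justifying that $X_i$ and $X_{i+1}$ are each marginally distributed according to $p$, which I would handle by invoking stationarity of $p$ under the Glauber chain (exactly as in the opening sentence of the proof of Lemma~\ref{lem:goodset-largeprob}); once that is in hand, the estimate on $\E[g(X_i)]$ follows directly from Lemma~\ref{lem:goodset-largeprob} at time $i+1$. The rest is a one-line Markov bound, so there is no substantial obstacle; the main care is bookkeeping the factor of two in the exponent, which conveniently arranges itself because the threshold defining $N_K^a(i)$ ($\exp(-K^2/16t^*)$) is the square root of the tail bound ($\exp(-K^2/8t^*)$), leaving the advertised $\exp(-K^2/16t^*)$ after the division.
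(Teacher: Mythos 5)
Your proposal is correct and takes essentially the same approach as the paper: both arguments combine Lemma~\ref{lem:goodset-largeprob} applied at time $i+1$ (valid since $X_{i+1}\sim p$ by stationarity) with the defining threshold of $N_K^a(i)$. The paper simply writes your Markov-inequality step in expanded form---decomposing $\Pr\left[X_{i+1}\in G_K^a(i+1)\right]$ via the law of total probability over the event $\{X_i \in N_K^a(i)\}$ and rearranging---which is algebraically the same computation and yields the identical bound $8n\exp\left(-\frac{K^2}{16t^*}\right)$.
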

\begin{proof}
	We have from Lemma \ref{lem:goodset-largeprob}, that 
	\begin{align}
	&\Pr\left[X_{i} \notin G_K^a(i) \right] \le 8n\exp\left( -\frac{K^2}{8t^*} \right) \text{ and} \label{eq:nice1}\\
	&\Pr\left[X_{i+1} \notin G_K^a(i+1) \right] \le 8n\exp\left( -\frac{K^2}{8t^*} \right). \label{eq:nice2}
	\end{align}
	From the definition of the set $N_K^a(i)$ we have,
	\begin{align}
	\Pr\left[X_{i+1} \in G_K^a(i) | X_i \notin N_K^a(i)\right] \le 1-\exp\left( -\frac{K^2}{16t^*} \right).
	\end{align}
	Then we have,
	\begin{align}
	&1-8n\exp\left( -\frac{K^2}{8t^*} \right) \le \Pr\left[X_{i+1} \in G_K^a(i)\right]\\
	&= \Pr\left[X_{i+1} \in G_K^a(i) | X_i \in N_K^a(i)\right] \Pr\left[X_i \in N_K^a(i)\right] + \Pr\left[X_{i+1} \in G_K^a(i) | X_i \notin N_K^a(i)\right]\Pr\left[X_i \notin N_K^a(i)\right] \\
	&\le \Pr\left[X_i \in N_K^a(i)\right] + \left(1-\exp\left( -\frac{K^2}{16t^*} \right)\right)\Pr\left[X_i \notin N_K^a(i)\right] \\
	&=   \left(1-\exp\left( -\frac{K^2}{16t^*} \right)\right) + \exp\left( -\frac{K^2}{16t^*} \right)\Pr\left[X_i \in N_K^a(i)\right]. \label{eq:nice3}
	\end{align}
	(\ref{eq:nice3}) implies,
	\begin{align}
	&\Pr\left[X_i \in N_K^a(i)\right] \ge  \frac{\exp\left( -\frac{K^2}{16t^*} \right) - 8n\exp\left( -\frac{K^2}{8t^*} \right)}{\exp\left( -\frac{K^2}{16t^*} \right)}\\
	&=1-8n\exp\left(-\frac{K^2}{16t^*}\right).
	\end{align}
\end{proof}

\begin{lemma}
	\label{lem:martingale-increment-variance}
	Consider the martingale sequence defined in Definition~\ref{def:bilinear-doob-martingale}. Let $b=\left(16K + 16n^2\exp\left(-\frac{K^2}{16t^*} \right)\right)^2 + n^4\exp\left(-\frac{K^2}{16t^*}\right)$. Denote by $N_K^a(i)$ the following set of configurations (as was defined in Claim \ref{clm:nice-sets}):
	\begin{align}
	N_K^a(i) = \left\{ x_i \in \Omega \middle| \Pr\left[X_{i+1} \notin G_K^a(i+1) \middle| X_i = x_i \right] \le \exp\left( -\frac{K^2}{16t^*} \right) \right\}.
	\end{align}
	Then,
	\begin{align*}
	\Pr\left[\Var[B_{i+1} - B_i | \mathcal{F}_i] > b \middle| X_i \in G_K^a(i) \cap N_K^a(i) \right] = 0.
	\end{align*}
	where $\mathcal{F}_i = 2^{O_i}$.
\end{lemma}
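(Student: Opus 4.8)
The plan is to bound the conditional variance $\Var[B_{i+1}-B_i \mid \mathcal{F}_i]$ by splitting the expectation defining it according to whether the next step lands in the good set $G_K^a(i+1)$ or not, and to show that conditioned on $X_i \in G_K^a(i) \cap N_K^a(i)$ this variance is deterministically at most $b$. The starting observation is that $\Var[B_{i+1}-B_i \mid \mathcal{F}_i] \le \E[(B_{i+1}-B_i)^2 \mid \mathcal{F}_i]$ (variance is bounded by second moment), so it suffices to control this conditional second moment pointwise for each $x_i \in G_K^a(i) \cap N_K^a(i)$.

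First I would write the conditional second moment as an expectation over the single Glauber transition from $X_i = x_i$ to $X_{i+1}$, and partition on the event $\{X_{i+1} \in G_K^a(i+1)\}$ versus its complement. On the good event, both $X_i \in G_K^a(i)$ and $X_{i+1} \in G_K^a(i+1)$ hold, so Lemma~\ref{lem:bilinear-martingale-increment-bound} applies and gives $\abss{B_{i+1}-B_i} \le 16K + 16n^2\exp\left(-\frac{K^2}{16t^*}\right)$ pointwise; squaring contributes the first summand $\left(16K + 16n^2\exp\left(-\frac{K^2}{16t^*}\right)\right)^2$ of $b$. On the bad event $\{X_{i+1} \notin G_K^a(i+1)\}$, I cannot use the increment bound, so I fall back on the trivial worst-case bound $\abss{B_{i+1}-B_i} \le \max_x \abss{f_a(x)} \cdot 2 = O(n^2)$; more precisely $(B_{i+1}-B_i)^2 \le n^4$ (up to constants), since $f_a$ ranges over an interval of width $O(n^2)$ and $B_{i+1}, B_i$ are both conditional expectations of $f_a(X_{t^*})$. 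The key leverage is that, because $x_i \in N_K^a(i)$, the probability of the bad event is at most $\exp\left(-\frac{K^2}{16t^*}\right)$ by the definition of $N_K^a(i)$, so the bad event's contribution to the second moment is at most $n^4 \exp\left(-\frac{K^2}{16t^*}\right)$, matching the second summand of $b$.

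Combining, for every $x_i \in G_K^a(i) \cap N_K^a(i)$ we get $\E[(B_{i+1}-B_i)^2 \mid X_i = x_i] \le \left(16K + 16n^2\exp\left(-\frac{K^2}{16t^*}\right)\right)^2 + n^4\exp\left(-\frac{K^2}{16t^*}\right) = b$. Since this holds pointwise for every $x_i$ in the conditioning set, the conditional variance never exceeds $b$ on this set, giving $\Pr\left[\Var[B_{i+1}-B_i \mid \mathcal{F}_i] > b \,\middle|\, X_i \in G_K^a(i) \cap N_K^a(i)\right] = 0$, which is exactly the claim.

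The main obstacle I anticipate is handling the bad-event term cleanly: I need the crude $O(n^4)$ bound on $(B_{i+1}-B_i)^2$ to be genuinely deterministic (it is, since $f_a$ takes values in a bounded range and the $B_i$ are conditional expectations of a single step apart), and I must make sure the factor-of-two and absolute-constant bookkeeping in squaring $\abss{f_a}\le n^2$ lines up so the constant in front of the $n^4$ term is exactly $1$ as written in the definition of $b$. A secondary subtlety is being careful that $X_i \in N_K^a(i)$ bounds $\Pr[X_{i+1}\notin G_K^a(i+1)\mid X_i=x_i]$ and not a differently-indexed good set, so I should track the indices in $G_K^a(i+1)$ versus $G_K^a(i)$ carefully when invoking Claim~\ref{clm:nice-sets} and Lemma~\ref{lem:bilinear-martingale-increment-bound} together; once the split and the two bounds are in place, the argument is a short pointwise calculation rather than a probabilistic one.
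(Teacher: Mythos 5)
Your proposal is correct and follows essentially the same route as the paper's proof: reduce the conditional variance to the conditional second moment, split on whether $X_{i+1} \in G_K^a(i+1)$, apply Lemma~\ref{lem:bilinear-martingale-increment-bound} on the good event, and on the bad event use the crude deterministic bound $(B_{i+1}-B_i)^2 = O(n^4)$ multiplied by the transition probability bound $\exp\left(-\frac{K^2}{16t^*}\right)$ coming from the definition of $N_K^a(i)$. The constant bookkeeping you flag is a genuine (but harmless) wrinkle: the paper's own calculation ends with $4n^4\exp\left(-\frac{K^2}{16t^*}\right)$ rather than the $n^4$ term stated in the lemma, and this slack is absorbed downstream in Lemma~\ref{lem:apply-freedman}, where $B$ is set with the factor of $4$ included.
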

\begin{proof}
	Since, the random variables $X_0,\ldots,X_i$ together characterize every event in $\mathcal{F}_i$, we have,
	\begin{align}
	\Var[B_{i+1}-B_i | \mathcal{F}_i] = \Var[B_{i+1}-B_i | X_0,X_1,\ldots,X_i] = \Var[B_{i+1}-B_i | X_i] \label{eq:desigmafy} 
	\end{align}
	where the last equality follows from the Markov property of the Glauber dynamics.
	By the definition of $N_K^a(i)$, we have that
	\begin{align}
	&\Var\left[B_{i+1}-B_i \middle| X_i \in G_K^a(i) \cap N_K^a(i)\right] = \E\left[\left(B_{i+1}-B_i - \E\left[B_{i+1}-B_i | X_i \in G_K^a(i) \cap N_K^a(i)\right]\right)^2 \middle| X_i \in G_K^a(i) \cap N_K^a(i)\right] \\
	&= \E\left[\left(B_{i+1}-B_i\right)^2 \middle| X_i \in G_K^a(i) \cap N_K^a(i)\right] \label{eq:var7}\\
	&\le \E\left[\left(B_{i+1}-B_i\right)^2 \middle| X_i \in G_K^a(i) \cap N_K^a(i) \cap X_{i+1} \in G_K^a(i+1)\right] \notag\\
	&~~+ \E\left[\left(B_{i+1}-B_i\right)^2 \middle| X_i \in G_K^a(i) \cap N_K^a(i) \cap X_{i+1} \notin G_K^a(i+1)\right]\Pr\left[X_{i+1} \notin G_K^a(i+1) \middle| X_i \in G_K^a(i) \cap N_K^a(i)\right] \notag\\
	&\le \left(16K + 16n^2\exp\left(-\frac{K^2}{16t^*} \right)\right)^2 + 4n^4\Pr\left[X_{i+1} \notin G_K^a(i+1) \middle| X_i \in G_K^a(i) \cap N_K^a(i)\right] \label{eq:var8}\\
	&\le \left(16K + 16n^2\exp\left(-\frac{K^2}{16t^*} \right)\right)^2 + 4n^4\exp\left( -\frac{K^2}{16t^*} \right), \label{eq:var9}
	\end{align}
	where (\ref{eq:var7}) holds because $\E\left[B_{i+1}-B_i | X_i=x_i\right] = 0$ for all $x_i$ since $\{B_i\}_{i \ge 0}$ is a martingale, (\ref{eq:var8}) holds because $\Pr[\abss{B_{i+1}-B_i} \le 16K + 16n^2\exp\left(-\frac{K^2}{16t^*} \right) | X_i \in G_K^a(i) \cap X_{i+1} \in G_K^a(i+1)] = 1$ and the maximum $\E[(B_{i+1}-B_i)^2]$ can be is at most $2n^2$, (\ref{eq:var9}) follows from Claim \ref{clm:nice-sets}.
	 The last inequality implies the statement of the lemma.
\end{proof}

\subsection{Applying Freedman's Inequality and Completing the Proof}
\label{sec:freedman-complete}

With Lemma~\ref{lem:bilinear-martingale-increment-bound} and Lemma~\ref{lem:martingale-increment-variance} to bound the martingale increments, and Lemma~\ref{lem:bilinear-stopping-time-large} to show that the stopping time is large, we are ready to apply Freedman's inequality on the martingale defined in Definition~\ref{def:bilinear-doob-martingale}.

\begin{lemma}
\label{lem:apply-freedman}
For all $\rad \ge 300n\log^2 n/\eta$, 
$$\Pr\left[ \abss{f_a(X_{t^*}) - \E\left[f_a(X_{t^*})|X_0\right]} \ge \rad  \right] \le 5\exp\left(-\frac{\eta\rad}{1734n\log n} \right).$$
\end{lemma}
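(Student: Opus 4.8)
The plan is to apply Freedman's inequality (Lemma~\ref{lem:freedman}) to the Doob martingale $\{B_i\}$ of Definition~\ref{def:bilinear-doob-martingale}, whose increments $D_i := B_i - B_{i-1}$ form a martingale difference sequence with $B_{t^*} - B_0 = f_a(X_{t^*}) - \E[f_a(X_{t^*}) \mid X_0]$, and with stopping time $T_K$ from Definition~\ref{def:stopping-time}. All three ingredients Freedman requires have already been prepared: Corollary~\ref{cor:martingale-increment-prob-bound} bounds the increments by $\kappa := 16K + 16n^2\exp(-K^2/(16t^*))$ for every $i+1 < T_K$ (this ``bounded only until the stopping time'' feature is exactly what forces Freedman rather than Azuma), Lemma~\ref{lem:martingale-increment-variance} bounds each conditional variance by $b$ whenever $X_i$ lies in the good sets, and Lemma~\ref{lem:bilinear-stopping-time-large} guarantees $T_K > t^*$ with high probability. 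I would keep $K$ as a free parameter throughout and fix it only at the end.

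The core step is a union bound splitting the target event into a ``Freedman'' event and two failure events. Writing $S_t = B_t - B_0$, $V_t = \sum_{i \le t}\Var[D_i \mid \mathcal{F}_{i-1}]$, and total variance budget $B_{\mathrm{tot}} := (t^*+1)b$, I would bound
\[
\Pr[|S_{t^*}| \ge r] \le \Pr[\exists\, t \le T_K:\ |S_t| \ge r,\ V_t \le B_{\mathrm{tot}}] + \Pr[T_K \le t^*] + \Pr[V_{t^*} > B_{\mathrm{tot}}].
\]
The inclusion is valid because off the last two events we have $t^* < T_K$ and $V_{t^*} \le B_{\mathrm{tot}}$, so taking $t = t^*$ places $\{|S_{t^*}| \ge r\}$ inside the Freedman event. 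The first term is then $\le 2\exp(-r^2/(2(r\kappa + B_{\mathrm{tot}})))$ by Lemma~\ref{lem:freedman}; the second is $\le 8nt^*\exp(-K^2/(8t^*))$ by Lemma~\ref{lem:bilinear-stopping-time-large}; and the third I would control by a union bound over $i \le t^*$ of the events $X_i \notin G_K^a(i) \cap N_K^a(i)$ (via Lemma~\ref{lem:goodset-largeprob} and Claim~\ref{clm:nice-sets}), since on the complement every conditional variance is at most $b$ and hence $V_{t^*} \le B_{\mathrm{tot}}$; this costs $\le 16nt^*\exp(-K^2/(16t^*))$.

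The crux, and the main obstacle, is the choice of $K$. It cannot be fixed: the two failure probabilities scale like $\poly(n)\exp(-\Theta(K^2/t^*))$ and must beat the target $\exp(-\eta r/(1734 n\log n))$ for every $r$ up to the trivial range $r \le 2n^2$, so $K$ must grow with $r$. I would take $K = \Theta(\sqrt{r})$, i.e.\ $K^2/t^* = \Theta(\eta r/(n\log n))$. Then the correction terms $16n^2\exp(-K^2/(16t^*))$ and $n^4\exp(-K^2/(16t^*))$ are negligible, so $\kappa \approx 16K$ and $b \approx (16K)^2$, giving $B_{\mathrm{tot}} \approx 256\,t^* K^2 = \Theta(n\log n\, r/\eta)$. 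Crucially, because $r \le 2n^2$, the variance term dominates the increment term ($B_{\mathrm{tot}} \gg r\kappa$) across the whole admissible range, so Freedman produces a \emph{linear}-in-$r$ exponent $\exp(-r^2/(2B_{\mathrm{tot}})) = \exp(-\Theta(\eta r/(n\log n)))$, whose constant I would tune in $K$ to match $\exp(-\eta r/(1734 n\log n))$ exactly.

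The remaining work is bookkeeping around the single threshold $r \ge 300 n\log^2 n/\eta$, which does double duty: it makes $K^2/t^* = \Omega(\log n)$, which is what renders the polynomial corrections above negligible, and it makes $K^2/(16t^*)$ large enough that the $\poly(n)$ prefactors $8nt^*$ and $16nt^*$ in the two failure bounds are swallowed, leaving each of those terms below the target tail; summing the three contributions yields the constant $5$. I expect the genuinely delicate point to be balancing all three terms against a \emph{single} exponential rate with the stated explicit constants, rather than any one estimate in isolation.
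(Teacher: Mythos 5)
Your proposal is correct and follows essentially the same route as the paper's proof: Freedman's inequality applied to the Doob martingale of Definition~\ref{def:bilinear-doob-martingale} with the stopping time $T_K$, the identical three-term decomposition into the Freedman event, $\Pr[t^* \ge T_K]$, and $\Pr[V_{t^*} > B]$, the same choice $K = \Theta(\sqrt{\rad})$ (the paper takes $K = \sqrt{\rad}$ exactly), and the same use of the threshold $\rad \ge 300 n\log^2 n/\eta$ together with the trivial range $\rad > 2n^2$ to absorb the polynomial prefactors. The only cosmetic discrepancy is an off-by-one in the stopping-time index: since increments are bounded only for $i+1 < T_K$ (Corollary~\ref{cor:martingale-increment-prob-bound}), the Freedman event should range over $t < T_K$ rather than $t \le T_K$, which does not affect your argument because the inclusion only requires $t = t^* < T_K$.
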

\begin{proof}
From Freedman's inequality (Lemma \ref{lem:freedman}) applied on the martingale sequence (Definition~\ref{def:bilinear-doob-martingale}), we get
\begin{align}
\Pr\left[\exists t < T_K \text{ s.t. } \abss{B_t-B_0} \ge \rad \text{ and } V_{t} \le B \right] \le 2\exp\left(-\frac{\rad^2}{2(\rad K_1 + B)} \right) \label{eq:fr1}
\end{align}
where $K_1 = 16K + 16n^2\exp\left( -\frac{K^2}{16t^*} \right)$ (Lemma \ref{lem:bilinear-martingale-increment-bound}) and $V_t$ is defined as follows:
\begin{align}
V_t = \sum_{i=0}^{t-1} \Var\left[ B_{i+1} - B_{i} | \mathcal{F}_i \right]. \label{eq:bilinear-vt-def}
\end{align}
Set $B = t^*\left(16K + 16n^2\exp\left(-\frac{K^2}{16t^*} \right)\right)^2 + 4t^*n^4\exp\left( -\frac{K^2}{16t^*} \right)$ and $K = \sqrt{\rad}$.
Next, we note that if $r > 2n^2$, the statement of the theorem holds vacuously. From now on we handle the case when $r \le 2n^2$.
\begin{proposition}
If $2n^2 \ge r > 64n\log^2 n/\eta$, then $rK_1 + B \leq 867 r n \log n /\eta$.
\end{proposition}
\begin{proof}
We note that $n \leq r \leq n^2$: the former is in the condition of the proposition statement, and the latter is since the lemma is trivial for $r > n^2$.
Since $K = \sqrt{r}$, this implies $\sqrt{n} \leq K \leq n$.

We first focus on $rK_1$.
Since $r \geq 24 n \log^2 n /\eta$, we have that $16 n^2 \exp\left(-\frac{K^2}{16t^*}\right) \leq 16\sqrt{n}$, and therefore $K_1 \leq 16K + 16\sqrt{n} \leq 32K$, and thus $rK_1 \leq 32rK \leq 32rt^*$, where the latter inequality follows since $r \leq 2n^2$ while $t^* \geq n \log n$.

By a similar calculation, we have that $B \leq t^*\left(8K + 8\sqrt{n}\right)^2 + t^* \leq 257K^2t^* = 257rt^*$.

Adding the two, we have that $rK_1 + B \leq 289rt^* = 867r n \log n / \eta$, as desired.
\end{proof}

Hence, (\ref{eq:fr1}) becomes
\begin{align}
&\Pr\left[\exists t < T_K \text{ s.t. } \abss{B_t - B_0} \ge \rad \text{ and } V_{t} \le B \right] \le 2\exp\left(-\frac{\rad^2}{2(rK_1 + B)} \right) \\
&~~~\le 2\exp\left(-\frac{\eta\rad}{1734 n\log n} \right). \label{eq:fr2}
\end{align}

Next we will bound, $\Pr\left[V_{t^*} > B\right]$ which will be useful for obtaining the desired concentration bound from (\ref{eq:fr2}). 
\begin{align}
&\Pr\left[V_{t^*} > B\right] \le \Pr\left[V_{t^*} > B \middle| \forall \: 0 \le t \le t^* \: X_t \in G_K^a(t) \cap N_K^a(t) \right] + \Pr\left[\exists \: 0 \le t \le t^* \: X_t \notin G_K^a(t) \cap N_K^a(t) \right] \notag\\
&\le \Pr\left[\exists \: 0 \le t < t^* \: \text{ s.t. } \Var\left[ B_{t+1}-B_t | X_t \right] > \frac{B}{t^*} \middle| \forall \: 0 \le t \le t^* \: X_t \in G_K^a(t) \cap N_K^a(t) \right] \label{eq:var-bound2}\\
&~~+ \sum_{t=0}^{t^*}\left(\Pr\left[X_t \notin G_K^a(t) \right] + \Pr\left[X_t \notin N_K^a(t) \right]\right) \label{eq:var-bound3}\\
&\le 0 + t^*\left(8n\exp\left( -\frac{K^2}{8t^*} \right) + 8n\exp\left(-\frac{K^2}{16t^*}\right)\right) \le \frac{48n^2\log n}{\eta}\exp\left(-\frac{K^2}{16t^*}\right). \label{eq:var-bound4}
\end{align}
where (\ref{eq:var-bound2}) holds because $V_{t^*} > B$ implies that there exists a $0 \le t \le t^*$ such that $\Var\left[ B_{t+1}-B_t | X_t \right] > B/t^*$, (\ref{eq:var-bound3}) follows by an application of the union bound, and (\ref{eq:var-bound4}) follows from Lemma~\ref{lem:martingale-increment-variance}, Lemma~\ref{lem:goodset-largeprob} and Claim~\ref{clm:nice-sets}.

Now,
\begin{align}
&\Pr\left[\abss{f(X_{t^*}) - \E\left[f(X_{t^*})|X_0\right]}> \rad  \right]  = \Pr\left[\abss{B_{t^*}-B_0}> \rad  \right]\notag\\
&~~\le \Pr\left[\abss{B_{t^*}-B_0}> \rad \text{ and } V_{t^*} \le B \right] + \Pr\left[V_{t^*} > B\right] \label{eq:fr7}\\
&~~\le \Pr\left[\abss{B_{t^*}-B_0}> \rad \text{ and } V_{t^*} \le B \text{ and } t^* < T_K \right] + \Pr[t^* \ge T_K] + \Pr\left[V_{t^*} > B\right] \label{eq:fr3}\\
&~~\le \Pr\left[\left(\exists t \le t^* \text{ s.t. } \abss{B_t - B_0}> \rad \text{ and } V_{t} \le B\right) \text{ and } t^* < T_K \right] + \Pr[t^* \ge T_K] + \Pr\left[V_{t^*} > B\right] \label{eq:fr4}\\
&~~\le \Pr\left[\exists t < T_K \text{ s.t. } \abss{B_t - B_0}> \rad \text{ and } V_{t} \le B \right] + \Pr[t^* \ge T_K] + \Pr\left[V_{t^*} > B\right] \notag\\
&~~\le 2\exp\left(-\frac{\eta\rad}{1734n\log n} \right) + \frac{24n^2\log n}{\eta}\exp\left(-\frac{\eta \rad}{24n\log n} \right) + \Pr\left[V_{t^*} > B\right]  \label{eq:fr5}\\
&~~\le 3\exp\left(-\frac{\eta\rad}{1734n\log n} \right) + \Pr\left[V_{t^*} > B\right] \label{eq:fr6} \\
&~~\le 3\exp\left(-\frac{\eta\rad}{1734n\log n} \right) + \frac{48n^2\log n}{\eta}\exp\left(-\frac{\eta \rad}{48n \log n}\right) \label{eq:fr8}\\
&~~\le 5\exp\left(-\frac{\eta\rad}{1734n\log n} \right). \label{eq:fr9}
\end{align}
where (\ref{eq:fr7}) and (\ref{eq:fr3}) follow from the fact that $\Pr[A] \le \Pr[A \cap B] + \Pr[\neg B]$, (\ref{eq:fr4}) follows from the fact that $\Pr[A] \le \Pr[A \cup B]$, (\ref{eq:fr5}) follows from (\ref{eq:fr2}) and from Lemma \ref{lem:bilinear-stopping-time-large}, (\ref{eq:fr6}) holds because $\rad \ge 300n\log^2 n/\eta$, (\ref{eq:fr8}) follows from (\ref{eq:var-bound4}) and (\ref{eq:fr9}) again holds because $\rad \ge 300n\log^2 n/\eta$.
Note that we have implicitly assumed that $\eta > 1/n$, since otherwise the concentration bounds obtained are trivial.

\end{proof}

We note that this statement conditions on an initial state $X_0$.
In order to remove this conditioning, we must argue that after $t^*$ steps, the Glauber dynamics have mixed, and $X_{t^*}$ is very close in total variation distance to the true Ising model, for any starting point $x_0$.

We use Lemma \ref{lem:close-to-stationary} to remove the conditioning in our previous tail bound, which implies Theorem~\ref{thm:bilinear}.
\begin{lemma}
\label{lem:bilinear}
For all $\rad \ge 300n\log^2 n/\eta + 2$ and $n$ sufficiently large,
$$\Pr\left[ \abss{f_a(X_{t^*}) - \E\left[f_a(X_{t^*})\right]} \ge \rad \right] \le 5\exp\left(-\frac{\eta\rad}{1735n\log n} \right).$$
\end{lemma}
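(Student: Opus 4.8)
The final statement (Lemma~\ref{lem:bilinear}) differs from the just-proved Lemma~\ref{lem:apply-freedman} only in that the conditioning on the initial state $X_0$ is removed: we go from $\E[f_a(X_{t^*})\mid X_0]$ to the unconditional mean $\E[f_a(X_{t^*})]$, at the cost of a slightly enlarged radius requirement ($+2$) and a slightly weaker constant in the exponent ($1735$ versus $1734$). So my plan is to bridge the two by controlling the discrepancy between $\E[f_a(X_{t^*})\mid X_0]$ and $\E[f_a(X_{t^*})]$ uniformly, using the fast-mixing guarantee of Lemma~\ref{lem:close-to-stationary}.

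The key quantitative step is to show that $\left|\E[f_a(X_{t^*})\mid X_0] - \E[f_a(X_{t^*})]\right| \le 2$ pointwise for every starting state $X_0$. Since $t^* = 3\tmix$, Lemma~\ref{lem:close-to-stationary} (with an appropriate choice of the parameter $\zeta$, here essentially $\zeta+2 = 3$ so $\zeta=1$) gives that the law of $X_{t^*}$ started from any $x_0$ is within total variation distance $\exp(-2n\log n)$ of the stationary distribution $p$. Because $f_a$ takes values in $[-n^2, n^2]$ (as $\|a\|_\infty \le 1$ and there are at most $n^2$ terms each bounded by $1$), coupling the chain started at $X_0$ with a stationary copy and bounding the contribution of the disagreement event yields
\begin{align*}
\left| \E[f_a(X_{t^*})\mid X_0 = x_0] - \E[f_a(X_{t^*})] \right| \le 2n^2 \cdot \dtv(X_{t^*, x_0}, p) \le 2n^2 \exp(-2n\log n) \le 2,
\end{align*}
for $n$ sufficiently large. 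Note this bound is uniform in $x_0$, hence it also holds after averaging over $X_0 \sim p$, so $\left|\E[f_a(X_{t^*})\mid X_0] - \E[f_a(X_{t^*})]\right| \le 2$ holds almost surely.

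With that deterministic slack in hand, I would simply combine it with Lemma~\ref{lem:apply-freedman} via the triangle inequality. If $\abss{f_a(X_{t^*}) - \E[f_a(X_{t^*})]} \ge \rad$ then, since the two centering quantities differ by at most $2$, we must have $\abss{f_a(X_{t^*}) - \E[f_a(X_{t^*})\mid X_0]} \ge \rad - 2$. Thus
\begin{align*}
\Pr\left[\abss{f_a(X_{t^*}) - \E[f_a(X_{t^*})]} \ge \rad\right] \le \Pr\left[\abss{f_a(X_{t^*}) - \E[f_a(X_{t^*})\mid X_0]} \ge \rad - 2\right].
\end{align*}
Applying Lemma~\ref{lem:apply-freedman} at radius $\rad - 2$ (which is at least $300n\log^2 n/\eta$ precisely because we assumed $\rad \ge 300n\log^2 n/\eta + 2$) gives a tail bound of $5\exp\left(-\frac{\eta(\rad-2)}{1734n\log n}\right)$. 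The last routine step is to absorb the $-2$ shift into a slightly worse constant: for $\rad$ in the relevant range one checks that $\frac{\rad - 2}{1734} \ge \frac{\rad}{1735}$, so the bound becomes $5\exp\left(-\frac{\eta\rad}{1735n\log n}\right)$, as claimed.

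The only real subtlety — the ``main obstacle'' such as it is — is verifying the uniform mixing bound and confirming that $\exp(-2n\log n)$ genuinely dominates the $n^2$ range of $f_a$; this is where the generous choice $t^* = 3\tmix$ pays off, since it buys a total variation distance that is super-polynomially small and hence crushes the polynomial range of the function. Everything else is bookkeeping with the triangle inequality and constant-chasing, so I expect no conceptual difficulty beyond making sure the parameters of Lemma~\ref{lem:close-to-stationary} are instantiated correctly.
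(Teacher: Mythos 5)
Your proposal is correct and takes essentially the same route as the paper's own proof: both bound $\abss{\E[f_a(X_{t^*})\mid X_0]-\E[f_a(X_{t^*})]}$ uniformly via the mixing bound of Lemma~\ref{lem:close-to-stationary} (the paper gets $2n^2\exp(-2n\log n)\le 2\exp(-n)$, you round up to $2$), then shift the radius by that slack, invoke Lemma~\ref{lem:apply-freedman} at radius $\rad-2$, and absorb the shift into the constant $1735$ for $n$ sufficiently large.
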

\begin{proof}
Since $X_0 \sim p$, we have that, $X_{t^*} \sim p$ as well.
From Lemma \ref{lem:close-to-stationary}, we have that for $t^* = 3\tmix$,
\begin{align*}
&\dtv(X_{t^*}|X_0, X_{t^*}) \le \exp\left(-2n\log n \right) \\
\implies &\abss{\E\left[f_a(X_{t^*})|X_0\right] - \E\left[f_a(X_{t^*})\right]} \le 2n^2\exp(- 2n \log n) \le 2\exp(-n).
\end{align*}
Now,
\begin{align}
&\Pr\left[ \abss{f_a(X_{t^*}) - \E\left[f_a(X_{t^*})\right]}> ge \rad \right] \le \Pr\left[ \abss{f_a(X_{t^*}) - \E\left[f_a(X_{t^*}) | X_0 \right]} \ge \rad-2\exp(-n) \right] \notag\\
&\le 5\exp\left(-\frac{\eta(\rad-2)}{1734n\log^2 n} \right) \notag\\
&\le 5\exp\left(-\frac{\eta\rad}{1735n\log^2 n} \right), \label{eq:bilinear1}
\end{align}
where (\ref{eq:bilinear1}) holds for sufficiently large $n$.
\end{proof}

\subsection{Concentration under an External Field}
\label{sec:bilinear-concentration-external-field}
Under an external field, not all bilinear functions concentrate nicely even in the high temperature regime. This can be seen easily, for instance, in the case of $f_a(X) = \sum_{u \neq v} X_uX_v$. On an empty graph with a uniform external field $h$ on each node, $\Var(f_a(X)) = c(h)n^3$ (where $c(\cdot)$ is a function depending only on $h$). Hence the best scale of concentration one could hope for is at a distance $n^{1.5}$ from $\E[f_a(X)]$. However, tighter concentration akin to the one we achieve when there is no external field can be shown for classes of appropriately centered bilinear functions. We briefly describe the reason a non-centered function such as the one above doesn't concentrate and then argue at a high level how a correctly `centered' function has sharper tails.
To see where our framework fails when trying to show concentration of measure for arbitrary bilinear functions, let us look at $f_a(X) = \sum_{u \neq v} X_uX_v$. Under an external field, the linear functions associated with taking a step along the censored Glauber dynamics starting at $X$, are no longer zero mean. Although linear functions still concentrate around their expectation with a radius of $\tilde{O}(\sqrt{n})$, the expectations can be of the order $\Omega(n)$. Hence we \emph{can't} use concentration of linear functions to argue that $\abss{f_a^v(X)} \approx O(\sqrt{n})$. And the example described above shows that indeed the variance is higher for this function and the best concentration of measure one could hope to show has tails bounds which kick in at deviations of $O(n^{1.5})$ from the mean. To get stronger tails, the fix is to center our bilinear functions so that the linear functions arising from $f_a(X)-f_a(X')$ are zero mean thereby enabling application of concentration at radius $\tilde{O}(\sqrt{n})$ on the quantity $\abss{f_a^v(X)-\E[f_a^v(X)]}$ rather than having to bound $\abss{f_a^v(X)}$ itself. There are multiple ways to achieve this. We present two simple and natural ways of doing so in Theorem \ref{thm:bilinear-concentration-external}.
\begin{theorem}[Concentration of Measure for Bilinear Functions Under an External Field]
	\label{thm:bilinear-concentration-external}
	\begin{enumerate}
		\item Bilinear functions on the Ising model of the form $f_a(X) = \sum_{u,v} a_{uv}(X_u-\E[X_u])(X_v -\E[X_v])$ satisfy the following inequality at high temperature. There exist absolute constants $c$ and $c'$ such that, for $\rad \ge cn\log^2 n/\eta$,
		$$\Pr\left[\abss{f_a(X)-\E[f_a(X)]} \ge \rad \right] \le 4\exp\left(-\frac{\rad}{c'n\log n} \right).$$
		
		\item Bilinear functions on the Ising model of the form $f_a(X^{(1)},X^{(2)}) = \sum_{u,v} a_{uv}(X_u^{(1)}-X_u^{(2)})(X_v^{(1)} -X_v^{(2)})$, where $X^{(1)},X^{(2)}$ are two i.i.d samples from the Ising model, satisfy the following inequality at high temperature. There exist absolute constants $c$ and $c'$ such that, for $\rad \ge cn\log^2 n/\eta$,
		$$\Pr\left[\abss{f_a(X^{(1)},X^{(2)})-\E[f_a(X^{(1)},X^{(2)})]} \ge \rad \right] \le 4\exp\left(-\frac{\rad}{c'n\log n} \right).$$
	\end{enumerate}
\end{theorem}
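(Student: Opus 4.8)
The plan is to observe that the entire martingale argument behind Theorem~\ref{thm:bilinear} never uses the no-external-field hypothesis directly; it uses it only through Lemma~\ref{lem:linear}, i.e.\ through the fact that the linear functions $f_a^v$ produced by a single Glauber step are mean-zero and hence concentrate at radius $\tilde{O}(\sqrt n)$. Under an external field the uncentered forms $f_a^v(X)=\sum_{u\ne v}a_{uv}X_u$ acquire means of order $\Omega(n)$, which is exactly what breaks the good-set/stopping-time construction (Definition~\ref{def:stopping-time}), since there $|\E[f_a^v(X_{t^*})\mid X_t]|\le K=\tilde{O}(\sqrt n)$ can no longer hold. I would therefore first isolate a \emph{centered} version of Lemma~\ref{lem:linear}: for any constants $\mu_v$ and any linear form $\ell(x)=\sum_v a_v(x_v-\mu_v)$ with $|a_v|\le 1$ on an $\eta$-high-temperature model, the three conclusions of Lemma~\ref{lem:linear} hold verbatim. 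The first conclusion holds because $X_t\sim p$ stationary gives $\E[\ell(X_t)]=\sum_v a_v(\E[X_{t,v}]-\mu_v)$, which vanishes precisely when $\mu_v=\E[X_v]$ (Part 1) or when $\ell$ is an antisymmetric difference of two i.i.d.\ copies (Part 2). The second and third conclusions hold unchanged because the additive constants $\mu_v$ cancel in every difference $\ell(x)-\ell(x')$, so the Lipschitz constants of $g^t$ and the Doob increments remain bounded by $2$ exactly as before via the greedy coupling (Lemma~\ref{lem:greedy-properties}).

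For Part 1 I would set $\mu_u=\E[X_u]$ and re-run the proof of Theorem~\ref{thm:bilinear} on $f_a(X)=\sum_{u,v}a_{uv}(X_u-\mu_u)(X_v-\mu_v)$. A single Glauber step flipping node $v$ now changes $f_a$ by $-2X_v\,\tilde{f}_a^v(X)$ with $\tilde{f}_a^v(X)=\sum_{u\ne v}a_{uv}(X_u-\mu_u)$, so Claim~\ref{clm:glauber-step-characterization} carries over with the uncentered $f_a^v$ replaced by the centered form $\tilde{f}_a^v$, which is mean-zero by the previous paragraph. I would then define the good set and stopping time exactly as in Definition~\ref{def:stopping-time} but using $\tilde{f}_a^v$ in place of $f_a^v$, and re-derive Lemma~\ref{lem:goodset-largeprob}, Lemma~\ref{lem:bilinear-stopping-time-large}, Lemma~\ref{lem:bilinear-martingale-increment-bound}, and Lemma~\ref{lem:martingale-increment-variance}; each of these used only the mean-zero concentration of the relevant linear forms, which the centered Lemma~\ref{lem:linear} now supplies with identical constants. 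Feeding these into Freedman's inequality as in Lemma~\ref{lem:apply-freedman} and removing the conditioning on $X_0$ by mixing yields the stated $\exp(-r/(c'n\log n))$ tail for $r\ge cn\log^2 n/\eta$.

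For Part 2 the clean device is to lift to the product chain: view $(X^{(1)},X^{(2)})$ as a single Ising model on $2n$ spins with no cross-interactions between the two copies. Because each spin's neighborhood stays within its own copy, the Dobrushin condition (Definition~\ref{def:dobrushin}) is inherited with the same $\eta$, so the product model is $\eta$-high-temperature and its single-site Glauber dynamics mix in $O(n\log n)$ steps. Writing $Z_u=X_u^{(1)}-X_u^{(2)}$, the target is $f_a=\sum_{u,v}a_{uv}Z_uZ_v$, and a step flipping $X_v^{(1)}$ (resp.\ $X_v^{(2)}$) changes $f_a$ proportionally to the linear form $\sum_{u\ne v}a_{uv}Z_u$, whose coefficients on the $2n$ variables lie in $[-1,1]$ and whose mean is $0$ by the i.i.d.\ symmetry of the two copies. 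Thus the centered Lemma~\ref{lem:linear} applies on the $2n$-spin model (with $\sqrt{2n}=\Theta(\sqrt n)$ absorbed into constants), and the Part 1 argument goes through verbatim.

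The main obstacle is not any single calculation but pinning down the centered analogue of Lemma~\ref{lem:linear} and checking, step by step, that centering (or passing to differences of i.i.d.\ copies) leaves every bound in Sections~\ref{sec:linear}--\ref{sec:freedman-complete} invariant: the additive means drop out of all Lipschitz and martingale-difference estimates, and they reappear only in the one place that matters, the expectation $\E[\ell(X_t)]$, where they are exactly cancelled. Once this invariance is verified, both concentration bounds follow by re-running the unchanged machinery, which is why I expect the write-up to be short modulo this bookkeeping.
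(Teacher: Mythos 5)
Your proposal is correct and follows essentially the same route as the paper's proof: replace the linear forms $f_a^v$ by their centered (mean-zero) versions, observe that the additive centering constants cancel in every Lipschitz and martingale-difference estimate and enter only through the mean-zero property needed in Lemma~\ref{lem:linear}, and then re-run the good-set/stopping-time/Freedman machinery of Sections~\ref{sec:linear}--\ref{sec:freedman-complete} unchanged. Your only addition is making the $2n$-spin product chain explicit for Part 2 (and checking that Dobrushin's condition is inherited), a detail the paper's sketch leaves implicit but which is the natural formalization of "the rest of the proof follows in the same way."
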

\begin{proof}
	Since most of the proof follows along similar lines as that of the case with no external field, we briefly sketch the outline and highlight the major differences here. The calculations are straightforward to verify.
	The first step would be to prove a version of Lemma~\ref{lem:linear} for linear functions in the case of external field with the main difference being that wherever we had $f(x) = \sum_{v} a_v x_v$ before, we replace it with $f(x) = \sum_{v} a_v (x_v - \E[X_v])$. With this replacement it can be seen that the lemma follows in the presence of an external field. 
	Next, we proceed to define a martingale sequence in the same way as was done in the case without external field. The linear functions in the stopping time definition are now replaced with their centered versions (i.e. are made zero mean). When studying the martingale differences we end up having to bound the difference in the expected value of our function at some future time $t^*$, conditioning on starting at two different starting states $X$ and $X'$, where $X'$ is obtained by doing one step of the Glauber dynamics from $X$.  
	%
	%
	For the first style of centered functions listed in the theorem statement, if we unravel our bounding procedure we end up needing to bound functions of the form $\abss{2\sum_{u \neq v} a_{uv}(X_u-\E[X_u] )}$ for different $v$'s. The linear function inside the absolute value is zero mean and hence we can bound it in absolute value with high probability using concentration of measure for linear functions. 
	Similarly, for the second style of functions in the theorem statement, we end up needing to bound functions of the form $\abss{2\sum_{u \neq v} a_{uv}(X_u^{(1)}-X_u^{(2)})}$ which again are zero mean, and we can still use concentration of measure of linear functions to bound them. The rest of the proof follows in the same way as in the case without external field. \end{proof}

\subsection{An Exponential Tail is Inherent for Bilinear Statistics}
\label{sec:bilinear-tightness}
In this section, we show that our tail bound of Theorem \ref{thm:bilinear} is asymptotically tight upto a $\log n$ factor in the radius of concentration. Informally this means that exponential tails are the best one could hope to get for bilinear functions and sharper tails, (e.g. a Gaussian tail: $\exp(-\rad^2/n^2)$), can't be obtained. The tightness will follow from the following theorem which shows that the tail given by the Chernoff bound is asymptotically tight for sums of bounded i.i.d. random variables.
\begin{theorem}
\label{thm:chernoff-tight}
(Folklore)
Let $X_1,\ldots,X_n$ be i.i.d. samples from $Ber(1/2)$ and let $g(X) = \sum_{i=1}^n X_i$. Then for any $\rad > 0$,
\begin{align*}
\Pr\left[g(X)-\E[g(X)] > \rad \right] \ge \exp\left(- \frac{9\rad^2}{n} \right),\\
\Pr\left[g(X)-\E[g(X)] < -\rad \right] \ge \exp\left(- \frac{9\rad^2}{n} \right).
\end{align*}
\end{theorem}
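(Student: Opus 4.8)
The plan is to exploit the exact binomial structure. Since $g(X)\sim\mathrm{Binomial}(n,1/2)$ is symmetric about its mean $n/2$, the two inequalities are identical and it suffices to prove the upper tail $\Pr[g(X)-n/2>r]\ge\exp(-9r^2/n)$. This is an anti-concentration (tail lower bound) statement whose content lies in the genuine tail regime $\sqrt n\lesssim r<n/2$ — exactly the range relevant to the bilinear tightness application, where $r$ scales like $\tilde\Theta(n)$ — and the generous constant $9$ (against the true constant $2$) is what buys the slack needed below. I would therefore establish the inequality for $r$ at least of the standard-deviation scale $\sqrt n$, dispatching the degenerate near-maximal case $r\ge n/2$ separately.

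The main estimate would lower-bound the whole tail by a single atom and control that atom with an elementary entropy bound rather than Stirling. Let $k$ be the least integer exceeding $n/2+r$, so that $\{g=k\}\subseteq\{g-n/2>r\}$ and $k-n/2\in(r,r+1]$. Writing $\Pr[g=k]=\binom nk 2^{-n}$ and invoking the standard fact $\binom nk\ge 2^{nH(k/n)}/(n+1)$ (the modal term of $\mathrm{Binomial}(n,k/n)$ is at least the average of its $n+1$ terms, with $H$ the binary entropy), I obtain $\Pr[g=k]\ge \frac{1}{n+1}\exp(-n\,\dkl(k/n\,\|\,1/2))$, where $\dkl$ is measured in nats. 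Bounding the divergence by the $\chi^2$-divergence of the two Bernoullis, $\dkl(p\,\|\,1/2)\le \chi^2(\mathrm{Ber}(p)\,\|\,\mathrm{Ber}(1/2))=4(p-1/2)^2$, and substituting $p=k/n$ with $k-n/2\le r+1$, yields the clean bound $\Pr[g-n/2>r]\ge \frac{1}{n+1}\exp(-4(r+1)^2/n)$.

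To finish I would compare exponents. The ratio of this bound to the target is $\frac{1}{n+1}\exp((5r^2-8r-4)/n)$, so the single-atom estimate already proves the theorem once $5r^2-8r-4\ge n\ln(n+1)$, i.e. as soon as $r\gtrsim\sqrt{n\log n}$. For the intermediate band $\sqrt n\lesssim r\lesssim\sqrt{n\log n}$ the $1/(n+1)$ loss from a single atom is too expensive, so there I would instead invoke a local central limit / Berry--Esseen estimate: $\Pr[g-n/2>r]$ equals a standard-Gaussian tail at $2r/\sqrt n$ up to additive error $O(1/\sqrt n)$, and since a Gaussian tail decays like $\exp(-2r^2/n)$ this comfortably beats $\exp(-9r^2/n)$ throughout the band.

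The main obstacle I expect is precisely this reconciliation of the unavoidable polynomial prefactor with a clean exponential target across all scales of $r$: any single-term or Stirling-type lower bound loses a $\Theta(1/\sqrt n)$ factor, since the modal binomial atom carries only $\Theta(1/\sqrt n)$ of the total mass, and absorbing this loss into the exponent is possible only because the stated constant $9$ is far from the true constant $2$. Keeping the single-atom and CLT estimates consistent on their overlap, together with the integer rounding of $n/2+r$ and the degenerate behavior near $r=n/2$, are the only genuinely fiddly points; the rest is a routine computation.
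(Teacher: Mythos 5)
Your core estimate is sound and is, in essence, a careful version of what the paper leaves implicit: the paper's entire ``proof'' is the single remark that the theorem follows by an application of Stirling's inequalities, i.e., a Stirling-type lower bound on binomial probabilities, which is exactly what your single-atom bound $\Pr[g=k]=\binom{n}{k}2^{-n}\ge \frac{1}{n+1}e^{-nD(k/n\,\|\,1/2)}$ combined with $D(p\,\|\,1/2)\le 4(p-1/2)^2$ delivers. Your arithmetic in the large-$r$ regime is correct (the single atom suffices once $5r^2-8r-4\ge n\ln(n+1)$), and the Berry--Esseen patch for the band $\sqrt n\lesssim r\lesssim\sqrt{n\log n}$ --- needed because the single atom loses a polynomial prefactor --- does glue to the single-atom regime, although with your constants the two windows overlap only for fairly large $n$ (the requirement is roughly $\ln n\gtrsim 5\ln\ln n+O(1)$, since the Berry--Esseen error $\Theta(1/\sqrt n)$ swamps the Gaussian tail once $r\gtrsim\tfrac12\sqrt{n(\ln n-\ln\ln n)}$); you correctly flag this matching as the fiddly point.

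The genuine problem is with the two regimes you propose to handle ``separately,'' because the theorem as stated (``for any $r>0$'') is in fact false there, so no argument can cover them. For $r\ge n/2$ the event $g-n/2>r$ is empty (as $g\le n$), so the left-hand side is $0$ while $\exp(-9r^2/n)>0$: this case is a counterexample, not a dispatchable degeneracy. Likewise for small $r$: by symmetry of $\mathrm{Bin}(n,1/2)$ one always has $\Pr[g-n/2>r]\le 1/2$, whereas $\exp(-9r^2/n)>1/2$ whenever $r<\sqrt{n\ln 2}/3\approx 0.28\sqrt n$ (for odd $n$ and $0<r<1/2$ the left-hand side equals exactly $1/2$, so the failure is not a rounding artifact). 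Hence your restriction to $r\gtrsim\sqrt n$ is not a convenience but a necessity, and a correct writeup must restate the claim for an explicit window, say $c\sqrt n\le r<n/2$ for an absolute constant $c$ (numerically $c\approx 0.43$ is where the limiting inequality $\Pr[Z>2\alpha]\ge e^{-9\alpha^2}$ begins to hold). This defect is inherited from the paper, whose folklore statement fails at the same extremes; but the paper only ever invokes the bound at radius $\sqrt{r/2+n}/2\ge\sqrt n/2$, which lies inside the valid window, so your argument, once restated for that window, proves everything the tightness application in Section~\ref{sec:bilinear-tightness} actually uses.
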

One possible proof of the above theorem follows by the application of Stirling's inequalities.
Now, consider the bilinear function $f(X) = \sum_{u \neq v} X_uX_v$ on an Ising model on an empty graph. Hence for each $u$, $(X_u+1)/2 \sim Ber(1/2)$ independently. Note that $\E[f(X)] = 0$.
\begin{align*}
2f(X) &= \left(\sum_u X_u \right)^2 - n \\
\implies \Pr\left[\abss{f(X)} > \rad \right] &\ge \Pr\left[f(X) > \rad \right] \\
&= \Pr\left[ \abss{\sum_u X_u}  > \sqrt{\rad/2 + n}\right] = \Pr\left[ \abss{\sum_u \frac{X_u+1}{2} - \frac{n}{2}}  > \frac{\sqrt{\rad/2 + n}}{2}\right]\\
&\ge \exp\left( -\frac{9(\rad/2+n)}{4n} \right) \ge \exp(-9/4)\exp\left( -\frac{9\rad}{8n} \right),
\end{align*}
where the last inequality follows from Theorem \ref{thm:chernoff-tight}.
This shows that the tail bound obtained from Theorem \ref{thm:bilinear} is asymptotically nearly-tight (up to a $O(\log n)$ factor in the radius of concentration and $O(1/\log n)$ factor in the exponent of the tail bound).

	\section{Concentration of Measure for $d$-linear Functions}
\label{sec:multilinear}

In this section we show concentration of measure for $d$-linear functions on an Ising model in high temperature, when $d \ge 3$. Again, we will focus on the setting with \emph{no external field}. Although we will follow a recipe similar to that used for bilinear functions, the proof is more involved and requires some new definitions and tools. The proof will proceed by induction on the degree $d$. Due to the proof being more involved, for ease of exposition, we present the proof of Theorem~\ref{thm:multilinear} without explicit values for constants.

Our main theorem statement is the following:
\begin{theorem}
\label{thm:multilinear}
Consider any degree-$d$ multilinear function
$$f_a(x) = \sum_{U \subseteq V : |U| = d} a_{U} \prod_{u \in U} x_u$$ 
on an Ising model $p$ (defined on a graph $G=(V,E)$ such that $\abss{V} = n$) in $\eta$-high-temperature regime with no external field.
Let $\|a\|_\infty = \max_{U \subseteq V : |U| = d} |a_U|$.
There exist constants $C_1 = C_1(d) > 0$ and $C_2 = C_2(d) > 0 $ depending only on $d$, such that
if $X \sim p$, then for any $r \geq C_1\|a\|_\infty (n \log^2 n/\eta)^{d/2}$, we have
$$\Pr\left[ \abss{f_a(X) - \E\left[f_a(X)\right]}> \rad \right] \le 2\exp\left(-\frac{\eta\rad^{2/d}}{C_2\|a\|_\infty^{2/d} n\log n} \right).$$
\end{theorem}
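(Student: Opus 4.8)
The plan is to prove Theorem~\ref{thm:multilinear} by induction on the degree $d$, taking the bilinear result (Theorem~\ref{thm:bilinear}; with $d=1$ being Lemma~\ref{lem:lipschitz-lemma}) as the base case. After the usual scaling reduction to $\|a\|_\infty \le 1$ (which is what produces the $\|a\|_\infty^{2/d}$ in the denominator), the driving observation is exactly the one behind the bilinear proof: if $X'$ is one Glauber step from $X$ resampling node $v$, then $f_a(X)-f_a(X')$ equals $2X_v\cdot f_a^v(X)$ up to sign, where $f_a^v(x)=\sum_{U\ni v}a_U\prod_{u\in U\setminus\{v\}}x_u$ is a degree-$(d-1)$ multilinear function. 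Hence the Doob martingale $B_i=\E[f_a(X_{t^*})\mid X_i]$, defined as in Definition~\ref{def:bilinear-doob-martingale} with $t^*=3\tmix$, has increments controlled by the $n$ sub-functions $f_a^v$, which are one degree lower and therefore, by the inductive hypothesis, concentrate at the tighter radius $\tilde O_d(n^{(d-1)/2})$ rather than at their worst-case magnitude.

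Because the increment and stopping-time analysis needs more than the bare tail bound for degree-$(d-1)$ functions, I would thread the induction through a strengthened hypothesis that packages, for every degree $k<d$, three statements mirroring the three conclusions of Lemma~\ref{lem:linear}: (i) the tail bound of the theorem itself; (ii) concentration of the conditional expectation $\E[g(X_{t^*})\mid X_t]$ viewed as a function of the stationary $X_t$; and (iii) concentration of the fluctuation $g(X_{t^*})-\E[g(X_{t^*})\mid X_t]$ given $X_t$. Statements (ii) and (iii) are precisely what is needed to define the good sets: I would define $G_K^a(t)$ in direct analogy with Definition~\ref{def:stopping-time}, now requiring that every degree-$(d-1)$ sub-function $f_a^v$ (and its $X_{t^*-1}$ counterpart) satisfy the bounded-conditional-expectation and bounded-fluctuation conditions at level $K$, and set $T_K$ to be the first time we leave the good set.

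The crucial quantitative choice is the threshold $K$. Tracing the bilinear computation, where $K=\sqrt r=r^{(d-1)/d}$ at $d=2$, suggests taking $K=\Theta_d(r^{(d-1)/d})$. With this choice the increment bound from the analogue of Lemma~\ref{lem:bilinear-martingale-increment-bound} is $K_1=\Theta_d(K)$ (the contraction of Hamming distance under the greedy coupling again absorbs the combinatorial factors), the conditional variances are $\Theta_d(K^2)$, and the total variance over $t^*$ steps is $b=\Theta_d(t^*K^2)=\Theta_d(t^*r^{2(d-1)/d})$. Feeding $K_1$ and $b$ into Freedman's inequality (Lemma~\ref{lem:freedman}), and checking that $b$ dominates $rK_1$ across the relevant range (their ratio is $r^{1/d}/t^*\le 1$ once $r\le O(n^d)$), yields exponent $r^2/(2t^*K^2)=\Theta(r^{2/d}/t^*)=\Theta_d(\eta r^{2/d}/(n\log n))$, exactly the target. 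To show $T_K>t^*$ with high probability I would apply hypotheses (ii) and (iii) to each $f_a^v$ at radius $K=r^{(d-1)/d}$: since $K^{2/(d-1)}=r^{2/d}$, each failure probability is $\exp(-\Theta_d(\eta r^{2/d}/(n\log n)))$, so a union bound over the $n$ vertices and $t^*$ times is absorbed just as in Lemma~\ref{lem:bilinear-stopping-time-large}, provided $r\ge C_1(n\log^2 n/\eta)^{d/2}$; this lower-radius hypothesis is exactly what guarantees $K$ exceeds the degree-$(d-1)$ concentration radius.

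I expect the main obstacle to be establishing the strengthened hypotheses (ii) and (iii) for the higher-degree sub-functions, not the Freedman step. For linear functions (ii) followed from simple Lipschitz-ness of $X_t\mapsto\E[f(X_{t^*})\mid X_t]$, but for degree $k\ge2$ this map has Lipschitz constants that are themselves large degree-$(k-1)$ functions, so Lemma~\ref{lem:lipschitz-lemma} cannot be invoked directly; these statements must instead be proved inside the same induction, their proofs mirroring the main martingale argument one degree down. A secondary nuisance, absent in the bilinear case, is that for $d\ge3$ the sub-functions $f_a^v$ have degree $d-1\ge2$ and so need not be mean-zero under the $X\mapsto-X$ symmetry; this is handled by working throughout with the centered quantities $\E[g(X_{t^*})\mid X_t]$ and $g(X_{t^*})-\E[g(X_{t^*})\mid X_t]$ appearing in the good-set definition, so that only deviations, never raw values, are ever bounded. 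The remaining steps -- the pointwise increment bound, the variance corollary via the analogue of Claim~\ref{clm:nice-sets}, and the removal of the conditioning on $X_0$ through Lemma~\ref{lem:close-to-stationary} -- go through essentially verbatim, with all constants now carrying an (untracked) dependence on $d$.
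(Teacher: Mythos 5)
Your skeleton---induction on $d$, a three-part strengthened hypothesis (tail bound, concentration of $\E[g(X_{t^*})\mid X_t]$, concentration of the fluctuation), good sets built from the degree-$(d-1)$ sub-functions, and Freedman's inequality with $K=\Theta(r^{(d-1)/d})$---is exactly the structure of the paper's proof (Theorem~\ref{thm:multilinear-induction}), and your Freedman arithmetic is correct. However, your handling of the non-mean-zero sub-functions contains a genuine gap. You claim the issue is resolved by ``working throughout with the centered quantities\ldots so that only deviations, never raw values, are ever bounded,'' but this cannot be realized: the martingale increment is controlled by the discrete gradient of $f_a$, and the discrete gradient at a coordinate is the \emph{raw value} of a sub-function, not its deviation. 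Concretely, after telescoping $f_a(X_{t^*})-f_a(X'_{t^*-1})$ into terms of the form $\sum_{u}(X_{t^*,u}-X'_{t^*-1,u})\,g_u$ with $g_u$ of degree $d-1$, writing $g_u=\E[g_u\mid\cdot]+\delta_u$ leaves, besides the deviation term you can handle, a term bounded by $\E[2\dh(X_{t^*},X'_{t^*-1})\mid\cdot]\cdot\max_u\abss{\E[g_u\mid\cdot]}$; Hamming contraction kills the first factor, but the second is a raw conditional expectation whose typical size is $\abss{\E[g_u(X)]}$ up to fluctuations. Concentration statements are translation-invariant and can say nothing about $\abss{\E[g_u(X)]}$, which a priori can be of order $n^{d-1}$ and would destroy the $O_d(K)$ increment bound. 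Closing this requires a separate, non-inductive estimate on the $(d-1)$-order marginals of a high-temperature Ising model: the paper devotes Section~\ref{sec:marginals-bound} to proving $\sum_{u_1,\ldots,u_k}\abss{\E[X_{u_1}\cdots X_{u_k}]}=O_k((n\log n/\eta)^{k/2})$ via ferromagnetic domination through a generalized FK representation (Lemmas~\ref{lem:ferro-marginals-bound}--\ref{lem:ferro-dominates}, Corollary~\ref{cor:marginal-bound}). This ingredient, which your proposal explicitly asserts can be avoided, is indispensable; it is precisely why the paper's good sets in (\ref{eq:multilinear-Gt}) bound raw conditional expectations and why Statement~\ref{stmt:conc-of-cond-expectation} of Theorem~\ref{thm:multilinear-induction} bounds $\abss{\E[f_a(X_{t^*})\mid X_{t_0}]}$ in absolute value rather than around its mean.

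A second, smaller gap concerns the \emph{hybrid} terms. For $d\ge 3$ the telescoping produces products mixing coordinates of the two coupled chains, e.g. $\prod_{e<l}X_{t^*,u_e}\prod_{e>l}X'_{t^*-1,u_e}$. These are not multilinear functions of a single chain's state, so your inductive hypothesis---stated for functions of one run---cannot be invoked on them directly, and your remark that the increment bound ``goes through essentially verbatim'' glosses over this. The paper needs a separate supplementary induction over hybrid functions (Theorem~\ref{thm:hybrid-supplement-induction}), which in turn relies on concentration of the Hamming distance between coupled runs (Lemma~\ref{lem:hamming-concentration}), proved via the $k$-greedy coupling of Definition~\ref{def:generalized-greedy-coupling}; none of this machinery appears in, or follows from, your outline.
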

Note that, like our bilinear theorem statement, this statement is phrased for multilinear functions of degree exactly equal to $d$.
This is for convenience of notation in our proof.
A general purpose theorem for all degree-$d$ multilinear functions can be obtained by simply partitioning the terms based on their degree, and applying this theorem for each degree from $1$ to $d$.
This will not incur significant costs in the concentration bound, as the terms of order lower than $d$ have much tighter radii of concentration.

\begin{remark}
\label{rem:multilinear-tight}
	The bound presented in Theorem~\ref{thm:multilinear} is asymptotically tight up to an $O_d(\log^d n)$ factor in the radius of concentration and a $O(1/\log n)$ factor in the exponent of the tail bound. This can be shown via an argument similar to that employed in Section~\ref{sec:bilinear-tightness}. In particular, for an Ising model on an empty graph (where each node is completely independent of the others), we have for the $d$-linear function $f(x) = \sum_{U \subseteq V : |U| = d} \prod_{u \in U} x_u$, the following inequality for a big enough constant $C(d)$
	\begin{align*}
	\Pr\left[\abss{f(X) - \E\left[f(X)\right]}> \rad\right] \ge 2\exp\left(-\frac{\rad^{2/d}}{C(d)n}\right).
	\end{align*}
\end{remark}

\subsection{Overview of the Technique}
Our approach uses induction and is similar to the one used for bilinear functions. To show concentration for $d$-linear functions we will use the concentration of $(d-1)$-linear functions together with Freedman's martingale inequality.

Consider the following process: Sample $X_0 \sim p$ from the Ising model of interest. Starting at $X_0$, run the Glauber dynamics associated with $p$ for $t^* = (d+1)\tmix$ steps. We will study the target quantity, $\Pr\left[\abss{f_a(X_{t^*}) - \E[f_a(X_{t^*}) | X_0]} > K \right]$, by defining a martingale sequence similar to Definition~\ref{def:bilinear-doob-martingale}. However, to bound the increments of the martingale for $d$-linear functions we will require an induction hypothesis which is more involved. The reason is that with higher degree multilinear functions ($d > 2$), the argument for bounding increments of the martingale sequence runs into multilinear terms which are a function of not just a single instance of the dynamics $X_t$, but also of the configuration obtained from the coupled run, $X_t'$. We call such multilinear terms \emph{hybrid} terms and multilinear functions involving \emph{hybrid} terms as \emph{hybrid} multilinear functions henceforth. Since the two runs (of the Glauber dynamics) are coupled greedily to maximize the probability of agreement and they start with a small Hamming distance from each other ($\le 1$), these hybrid terms behave very similar to the non-hybrid multilinear terms. Showing that their behavior is similar, however, requires some supplementary statements about them which are presented in Theorem \ref{thm:hybrid-supplement-induction}. Theorem \ref{thm:hybrid-supplement-induction} will also be proven using induction. In addition to the martingale technique of Section \ref{sec:bilinear}, an ingredient that is crucial to the proving concentration for $d \ge 3$ is a bound on the magnitude of the $(d-1)$-order marginals of the Ising model (i.e. terms of the form $\abss{\E[X_{u_1} X_{u_2} \ldots X_{u_{d-1}}]}$). This is because when studying degree $d \ge 3$ functions we find ourselves having to bound expected values of degree $d-1$ multilinear functions on the Ising model. A naive bound of $O_d(n^{d-1})$ can be argued for these functions but by exploiting the fact that we are in high temperature, we can show a bound of $O_d(n^{(d-1)/2})$. When $d=2$, $(d-1)$-linear functions are just linear functions which are zero mean. However, for $d \ge 3$, this is not the case. Hence, we first need to prove this desired bound on the marginals of an Ising model in high temperature. We will do it by a set of tools which are quite different from those used in Section \ref{sec:bilinear}.\\

\noindent In Section \ref{sec:multilinear-setup} we state some lemmata and definitions.
We then show a bound on the expected value of $d$-linear functions on the Ising model in high temperature (Section \ref{sec:marginals-bound}). 
We proceed by showing our main result (Theorem~\ref{thm:multilinear-induction}) in Section \ref{sec:multilinear-induction}. 
This result requires us to relate the expected values and tail probabilities of \emph{hybrid} terms to those of non-hybrid terms, which we do as Theorem~\ref{thm:hybrid-supplement-induction} in Section~\ref{sec:hybrid}.

\subsection{Setup}
\label{sec:multilinear-setup}
We will now proceed with the setup of our argument for concentration of $d$-linear functions.

Recall from Claim \ref{clm:glauber-step-characterization} the linear functions that arose when looking at the difference in the value of a bilinear function due to a step of the Glauber dynamics. In a similar vein, we define a family $F_a^d$ of multilinear functions on the Ising model of degree $\le d-1$ associated with any $d$-linear function $f_a(x)$. 
\begin{definition}
	\label{def:function-family}
	\begin{align}
	&F_a^d = \bigcup_{l=0}^{d-1} F_a(l)\ \mathrm{ where }\\
	&F_a^d(l) = \{ f_a^{v_1,v_2,\ldots,v_{d-l}} \: |  \: \forall \: \text{ distinct }v_1,v_2,\ldots,v_{d-l} \in V  \}\ \mathrm{ and}\\
	&f_a^{v_1,v_2,\ldots,v_k}(x) = \sum_{ u_1,u_2,\ldots,u_{d-k} \in V \setminus \{v_1,v_2,\ldots,v_k \} } a_{u_1 u_2 \ldots u_{d-k} v_1 v_2 \ldots v_k} X_{u_1}X_{u_2}\ldots X_{u_{d-k}}.
	\end{align}
\end{definition}
In the set of functions defined above, the degree $d-1$ functions arise (up to scaling) from looking at the difference in values of $f_a(X)$ when a single step of the Glauber dynamics is taken. 
More generally, the degree $l-1$ functions in the definition arise when looking at the difference in values of a degree $l$ function from $F_a^d(l)$ when a single step of the Glauber dynamics is taken.

We will also need to generalize the greedy coupling (Definition \ref{def:greedy-coupling}) used in Section \ref{sec:bilinear} to couple two runs of the Glauber dynamics. The generalization will provide a way of coupling an arbitrary number of runs of the Glauber dynamics on a common Ising model $p$.
\begin{definition}[The $k$-Greedy Coupling]
	\label{def:generalized-greedy-coupling}
	
	Given an Ising model $p$ in high temperature, for any $k > 0$, consider the following process: Let $x_0^{(1)}, x_0^{(2)}, \ldots , x_0^{(k)} \in \Omega$ be $k$ starting configurations. Run $k$ instances of the Glauber dynamics associated with $p$ with the $i^{th}$ instance starting at state $X_0^{(i)} = x_0^{(i)}$. Let the sequence of states observed in the $i^{th}$ run of the dynamics be $X_0^{(i)}, X_1^{(i)}, X_2^{(i)},\ldots$. Couple the $k$ runs in the following way: At each time step $t$ choose a vertex $v \in V$ uniformly at random to update in all of the $k$ runs. Let $p^i$ denote the probability that the $i^{th}$ Glauber dynamics instance sets $X_{t,v}^{(i)} = 1$. Let $p_1 \le p_2 \le \ldots p_k$ be a rearrangement of the $p^i$ values in increasing order. Also let $p_0 = 0$ and $p_{k+1} = 1$. Draw a number $x$ uniformly at random from $[0,1]$ and couple the updates according to the following rule:\\
	
	\noindent If $x \in [p_l,p_{l+1}]$ for some $0 \le l \le k$, set $X_{t,v}^{(i)} = -1$ for all $1 \le i \le l$ and $X_{t,v}^{(i)} = 1$ for all $l < i \le k$.\\
	
	\noindent We call this coupling the generalized greedy coupling of the $k$ runs or the $k$-greedy coupling.
\end{definition}

Now we list some properties the generalized greedy coupling (Definition \ref{def:generalized-greedy-coupling}) satisfies.
\begin{lemma}[Properties of the $k$-Greedy Coupling]
	\label{lem:generalized-greedy-properties}
	The $k$-Greedy coupling (Definition \ref{def:generalized-greedy-coupling}) is a valid coupling of $k$ runs of Glauber dynamics with the following properties.
	\begin{enumerate}
		\item If $X_0^{(i)} \sim p$, then $X_{t}^{(i)} \sim p$ for all $t \ge 0$ and for all $1 \le i \le k$.
		\item If $p$ is an Ising model in $\eta$-high temperature, for any pair of runs $i \ne j$, 
		$$\E\left[ \dh(X_t^{(i)}, X_t^{(j)}) \middle| (X_0^{(i)}, X_0^{(j)}) \right] \le \left(1 - \frac{\eta}{n} \right)^t \dh(X_0^{(i)}, X_0^{(j)}).$$
		That is, the joint distribution of any two of the runs is a greedy coupling as described in Definition \ref{def:greedy-coupling}.
		\item For any pair of runs $i \ne j$, the distribution of $X_t^{(i)}$, for any $t \ge 0$, conditioned on $X_0^{(i)}$ is independent of $X_0^{(j)}$.
		
	\end{enumerate}
\end{lemma}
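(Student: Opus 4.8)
The plan is to reduce all three claims to two elementary observations about the threshold rule in Definition~\ref{def:generalized-greedy-coupling}: first, that each individual run is marginally a faithful copy of the Glauber dynamics, and second, that the joint law of any two of the $k$ runs is precisely the two-chain greedy coupling of Definition~\ref{def:greedy-coupling}. Once these are in hand, Property~1 follows from stationarity, and Properties~2 and~3 follow by simply invoking the corresponding parts of Lemma~\ref{lem:greedy-properties} for the distinguished pair $(i,j)$.

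First I would verify validity, i.e.\ marginal correctness. Fix a run and let $m$ be its rank when the update probabilities $p^1, \dots, p^k$ at the chosen vertex $v$ are sorted as $p_1 \le \dots \le p_k$; by the rule, this run receives the spin $-1$ exactly when $x \in [p_l, p_{l+1}]$ with $l \ge m$, that is, exactly when $x \ge p_m$. Since $x$ is uniform on $[0,1]$, the run is set to $+1$ with probability $p_m$, which is precisely its own Glauber update probability at $v$; as $v$ is chosen uniformly and shared across runs, each run in isolation performs a correct single-site Glauber step. This establishes that the $k$-greedy coupling is a valid coupling. Property~1 is then immediate: a faithful Glauber chain started from the stationary distribution $p$ remains distributed as $p$ at every step, so $X_0^{(i)} \sim p$ implies $X_t^{(i)} \sim p$ for all $t \ge 0$ and all $1 \le i \le k$.

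The crux is the pairwise reduction. I would argue that if we restrict attention to two runs $i \ne j$ with thresholds $p^i \le p^j$ at the updated vertex, then whether each is assigned $+1$ or $-1$ is determined solely by comparing the shared uniform $x$ against $p^i$ and $p^j$: run $i$ gets $+1$ iff $x < p^i$, and run $j$ gets $+1$ iff $x < p^j$. The additional breakpoints contributed by the other $k-2$ chains merely subdivide the unit interval and do not alter this joint outcome, so at each step the pair undergoes exactly the assignment prescribed in Definition~\ref{def:greedy-coupling}, and hence the full joint law of $(X_t^{(i)}, X_t^{(j)})$ coincides with the two-chain greedy coupling. I expect this bookkeeping of the sorted thresholds---confirming that the extra breakpoints are irrelevant to the pair---to be the only genuinely delicate point, though it is a routine verification rather than a deep argument. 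With the reduction established, Property~2, the contraction $\E[\dh(X_t^{(i)}, X_t^{(j)}) \mid (X_0^{(i)}, X_0^{(j)})] \le (1 - \eta/n)^t \dh(X_0^{(i)}, X_0^{(j)})$, follows from part~2 of Lemma~\ref{lem:greedy-properties}, and Property~3, the conditional independence of $X_t^{(i)}$ from $X_0^{(j)}$ given $X_0^{(i)}$, follows from part~3 of the same lemma, both applied to the pair $(i,j)$.
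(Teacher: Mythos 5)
Your proposal is correct and takes essentially the same route as the paper's proof: both verify that each run is marginally a faithful Glauber chain (giving validity and Property~1 by stationarity), and both reduce the joint law of any pair of runs to the two-chain greedy coupling of Definition~\ref{def:greedy-coupling}, so that Properties~2 and~3 follow directly from Lemma~\ref{lem:greedy-properties}. The only difference is that you spell out the sorted-threshold bookkeeping (a run of rank $m$ gets $+1$ iff $x < p_m$, and the extra breakpoints from the other $k-2$ chains are irrelevant to the pair) more explicitly than the paper, which simply asserts these facts.
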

\begin{proof}
	First we will argue that the $k$-greedy coupling is a valid coupling. Consider the marginal distribution of any one of the $k$ runs: $X_0^{(j)},X_1^{(j)},\ldots$. The process of generating $X_{t+1}^{(j)}$ from $X_t^{(j)}$ corresponds precisely to a step of the Glauber dynamics. Firstly, the sampling of a node among all choices is common to all runs and hence also to run $j$. Secondly, the update probabilities for the selected node are exactly what Glauber dynamics would have prescribed. Hence, it is a valid coupling of the $k$ runs. Since $p$ is the stationary distribution corresponding to all the $k$ runs, $X_0^{(i)} \sim p \implies X_t^{(i)} \sim p$ for all $t \ge 0$. We will now argue that any pair of runs $i \ne j$ are coupled according to the greedy coupling of Definition (\ref{def:greedy-coupling}). Since the node to be updated in any step is chosen to be the same for all runs it is also the same for runs $i$ and $j$. Moreover, the updates of the selected node in runs $i$ and $j$ are coupled in precisely the same way as they were under the greedy coupling. Hence, the $k$-greedy coupling is a greedy coupling for any pair of runs $i$ and $j$. Hence, from Lemma \ref{lem:greedy-properties}, we have
	$$\E\left[ \dh(X_t^{(i)}, X_t^{(j)}) \middle| (X_0^{(i)}, X_0^{(j)}) \right] \le \left(1 - \frac{\eta}{n} \right)^t \dh(X_0^{(i)}, X_0^{(j)}).$$
	Also from Lemma \ref{lem:greedy-properties}, we have that the distribution of $X_t^{(i)}$, for any $t \ge 0$, conditioned on $X_0^{(i)}$ is independent of $X_0^{(j)}$. 
\end{proof}

The $k$-greedy coupling we have defined above will be useful in showing the following property about the concentration of the Hamming distance between two greedily coupled runs which is stated as Lemma \ref{lem:hamming-concentration}.
\begin{lemma}
	\label{lem:hamming-concentration}
	Let $x_0,y_0 \in \Omega$ be two configurations for a high temperature Ising model $p$ on $n$ nodes. Let $\{X_t\}_{t\ge 0}, \{Y_t\}_{t\ge 0}$ be two runs of Glauber dynamics associated with $p$, coupled greedily with $X_0 = x_0, Y_0=y_0$. Then, for any integer $t >0$ and any real $K > 0$,
	$$\Pr\left[ \abss{\dh(X_t,Y_t) - \E\left[ \dh(X_t,Y_t) | X_0,Y_0 \right]} > K \middle| X_0=x_0,Y_0=y_0 \right] \le 2\exp\left(-\frac{K^2}{16t}\right).$$
\end{lemma}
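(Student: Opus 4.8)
The plan is to realize $\dh(X_t,Y_t)$ as a deterministic function of the starting states and the per-step randomness of the coupling, build the associated Doob martingale, bound its increments by a constant, and invoke Azuma's inequality (Lemma~\ref{lem:azuma}). First I would write $(X_t,Y_t) = h(X_0,Y_0,R_1,\ldots,R_t)$, where $R_s$ encodes the randomness used by the greedy coupling at step $s$ (the node chosen to update, together with the $\mathrm{Uniform}([0,1])$ draw that determines the coupled updates). Conditioning on $X_0=x_0, Y_0=y_0$ throughout, define the Doob martingale
$$D_s = \E\left[\dh(X_t,Y_t) \,\middle|\, X_0, Y_0, R_1, \ldots, R_s\right],$$
so that $D_0 = \E[\dh(X_t,Y_t)\mid X_0,Y_0]$ and $D_t = \dh(X_t,Y_t)$. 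Azuma's inequality then reduces the claim to a bound of $4$ on the martingale increments: if $\abss{D_s - D_{s-1}} \le 4$ almost surely, Lemma~\ref{lem:azuma} gives $\Pr[\abss{D_t-D_0} > K] \le 2\exp(-K^2/(16t))$, which is exactly the statement.

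The crux is the increment bound, and this is the step I expect to be the main obstacle. Fixing the history $R_1,\ldots,R_{s-1}$ (hence the pair $(X_{s-1},Y_{s-1})$) and writing $\phi(r) = \E[\dh(X_t,Y_t)\mid \mathcal{F}_{s-1}, R_s=r]$, the martingale property gives $\abss{D_s-D_{s-1}} \le \sup_{r,r'}\abss{\phi(r)-\phi(r')}$. Two realizations $r,r'$ of $R_s$ apply a single-site update to $(X_{s-1},Y_{s-1})$ at possibly different nodes, so the resulting states satisfy $\dh(X_s^A, X_s^B) \le 2$ and $\dh(Y_s^A, Y_s^B) \le 2$, where $A,B$ denote the two branches. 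To compare the continuations I would couple all four chains $X^A, Y^A, X^B, Y^B$ from time $s$ onward using the $4$-greedy coupling (Definition~\ref{def:generalized-greedy-coupling}); this is consistent with the greedy coupling defining each branch, and by Lemma~\ref{lem:generalized-greedy-properties} every pairwise Hamming distance contracts.

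The triangle inequality for Hamming distance then gives $\abss{\dh(X_t^A,Y_t^A) - \dh(X_t^B,Y_t^B)} \le \dh(X_t^A,X_t^B) + \dh(Y_t^A,Y_t^B)$, and taking expectations with the contraction factor $(1-\eta/n)^{t-s} \le 1$ yields $\abss{\phi(r)-\phi(r')} \le \dh(X_s^A,X_s^B) + \dh(Y_s^A,Y_s^B) \le 4$. With this increment bound in hand, the final step is the direct application of Azuma's inequality described above.

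The only points requiring care are verifying that the $4$-greedy coupling restricts to the correct greedy coupling on each branch (so that $\dh(X_t^A,Y_t^A)$ and $\dh(X_t^B,Y_t^B)$ carry the intended laws, making the continuation a legitimate coupling of the two conditional expectations), and confirming that a single Glauber step moves $(X_s^A,Y_s^A)$ away from $(X_s^B,Y_s^B)$ by at most two spins per chain, which is what produces the constant $4$ and hence the $16t$ in the exponent.
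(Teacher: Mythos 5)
Your proposal is correct and follows essentially the same route as the paper: a Doob martingale for $\dh(X_t,Y_t)$ along the coupled dynamics, increments bounded by $4$ using the $4$-greedy coupling of Definition~\ref{def:generalized-greedy-coupling} together with pairwise Hamming contraction and the triangle inequality, and then Azuma's inequality. The only (immaterial) difference is in how the increment bound is organized: the paper compares the conditional expectations at times $i$ and $i+1$ by shifting the horizon by one step (incurring a $+2$ correction term plus a $+2$ coupling term), whereas you compare two same-time branches generated by different realizations of the step-$s$ randomness, whose per-chain Hamming distance is at most $2$; both decompositions yield the constant $4$ and hence the same $\exp(-K^2/16t)$ tail.
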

\begin{proof}
	We will use Azuma's inequality (Lemma \ref{lem:azuma}).
	Consider the Doob martingale associated with $\dh(X_t,Y_t)$ (similar to that of Definition~\ref{def:bilinear-doob-martingale})but now defined using the greedily coupled dynamics), parameterized by $x_0,y_0$. The $i^{th}$ term in the martingale sequence is $H_i = \E\left[\dh(X_t,Y_t) | X_i,Y_i \right]$ (where we have used the Markovian property of the Glauber dynamics). We look at $\abss{H_{i+1}-H_i}$ for any $0 < i+1 \le t$.
	\begin{align}
	&\abss{H_{i+1}-H_i} = \abss{\E\left[\dh(X_t,Y_t) | X_{i+1},Y_{i+1} \right] - \E\left[\dh(X_t,Y_t) | X_i,Y_i \right]} \\
	&\le \abss{\E\left[\dh(X_t,Y_t) | X_{i+1},Y_{i+1} \right] - \E\left[\dh(X_{t-1}',Y_{t-1}') | X_i',Y_i' \right]} \label{eq:hamm-conc1} \\
	&~~+ \abss{\E\left[\dh(X_t,Y_t) | X_i,Y_i \right]- \E\left[\dh(X_{t-1},Y_{t-1}) | X_i,Y_i \right]} \label{eq:hamm-conc2}\\
	&\le \abss{\E\left[\dh(X_t,Y_t)-\dh(X_{t-1}',Y_{t-1}') | X_{i+1},Y_{i+1}, X_i',Y_i' \right]} + 2 \label{eq:hamm-conc3}\\
	&\le \E\left[\abss{\dh(X_t,Y_t)-\dh(X_{t-1}',Y_{t-1}')} | X_{i+1},Y_{i+1}, X_i',Y_i' \right] + 2 \notag\\
	&\le \E\left[\abss{\dh(X_t,X_{t-1}')+\dh(Y_{t},Y_{t-1}')} | X_{i+1},Y_{i+1}', X_i',Y_i' \right] + 2 \label{eq:hamm-conc5} \\
	&\le 4 \label{eq:hamm-conc6}
	\end{align}
	where (\ref{eq:hamm-conc1}) and (\ref{eq:hamm-conc2}) follows by adding and subtracting the term $\E\left[\dh(X_{t-1},Y_{t-1}) | X_i,Y_i \right]$ to the difference inside the absolute value. We have also renamed $\E\left[\dh(X_{t-1},Y_{t-1})|X_i,Y_i\right]$ as $\E\left[\dh(X_{t-1}',Y_{t-1}')|X_i',Y_i'\right]$ in (\ref{eq:hamm-conc1}) to avoid notational confusion in the later steps of our bounding, maintaining the understanding that the sequence $\{X_t',Y_t'\}_t$ has the same distribution as $\{X_t,Y_t\}_t$. The first term in (\ref{eq:hamm-conc3}) bounds the term of (\ref{eq:hamm-conc1}) for any valid coupling of the two greedily coupled probability spaces, namely $\{X_t,Y_t\}_{t \ge i}$ and $\{X_t',Y_t'\}_{t \ge i}$. Here we couple them using the $4$-greedy coupling (Definition \ref{def:generalized-greedy-coupling}). Also, by triangle inequality, $\E[|\dh(X_t,Y_t) - \dh(X_{t-1},Y_{t-1})| | X_i,Y_i] \le \E[\dh(X_t,X_{t-1}) + \dh(Y_t,Y_{t-1}) | X_i, Y_i] \le 2$. Hence (\ref{eq:hamm-conc3}) follows. Similarly, (\ref{eq:hamm-conc5}) follows because $\abss{\dh(X_t,Y_{t}) - \dh(X_{t-1}',Y_{t-1}')} \le \dh(X_t,X_{t-1}')+\dh(Y_{t},Y_{t-1}')$ and (\ref{eq:hamm-conc6}) follows because $\E[\dh(X_t,X_{t-1}') | X_{i+1},X_i'] \le \dh(X_{i+1},X_i') \le 1$ (Lemma \ref{lem:greedy-properties}) and similarly $\E[\dh(Y_t,Y_{t-1}') | Y_{i+1},Y_i'] \le \dh(Y_{i+1},Y_i') \le 1$.\\
	
	\noindent Hence by Azuma's inequality applied on the martingale sequence from $0$ to $t$, we get
	$$\Pr\left[ \abss{\dh(X_t,X_t') - \E\left[ \dh(X_t,X_t') | X_0,X_0' \right]} > K \middle| X_0=x_0,X_0'=x_0' \right] \le 2\exp\left(-\frac{K^2}{16t}\right).$$
\end{proof}

\subsection{Bounding Marginals of an Ising Model in High Temperature}
\label{sec:marginals-bound}
The goal of this section will be to obtain a bound on the expected values of the $d$-linear functions under consideration when computed over a sample from a high temperature Ising model.
We start by bounding the marginals of ferromagnetic Ising models ($\th_{uv} \ge 0$ for all $u,v$). We will show later, using a generalization of the Fortuin-Kastelyn (FK) model, that this suffices to yield the result for non-ferromagnetic Ising models as well. The FK model connects bond percolation with the Ising model and offers powerful tools to show stochastic domination inequalities which will enable us to bound the marginals of the Ising model. We assume familiarity with the FK model as described in Chapter 10 of~\cite{RassoulAghaS15}.
\begin{lemma}
	\label{lem:ferro-marginals-bound}
	Consider a ferromagnetic Ising model $p$ ($\th_{uv} \ge 0$ for all $(u,v) \in E$) at high temperature. Let $d$ be a positive integer. We have
	$$\sum_{u_1,\ldots,u_d} \E\left[\prod_{i=1}^d X_{u_i}\right] \le 2\left(\frac{4nd\log n}{\eta}\right)^{d/2} .$$
\end{lemma}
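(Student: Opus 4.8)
The plan is to reduce the sum of correlations to a single moment of the linear statistic $S := \sum_{v \in V} X_v$, and then control that moment using the sub-Gaussian concentration already available for linear functions. Two facts make this work: (i) for a ferromagnetic Ising model with no external field every correlation is nonnegative, and (ii) $S$ is a $2$-Lipschitz linear function, so the Lipschitz Concentration Lemma (Lemma~\ref{lem:lipschitz-lemma}) applies directly.

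First I would record the nonnegativity of correlations. By the Edwards--Sokal / random-cluster (FK) coupling of the ferromagnetic Ising model -- equivalently, by Griffiths' second inequality -- for every $A \subseteq V$ we have $\E\left[\prod_{v \in A} X_v\right] \ge 0$; concretely, conditioning on the FK edge configuration and assigning i.i.d.\ uniform $\pm 1$ spins to the clusters shows this expectation equals the probability that every cluster meets $A$ in an even number of vertices. Because $X_v^2 = 1$, any product over an ordered tuple $(u_1,\dots,u_d)$ with repeated indices collapses to a product over the vertices appearing an odd number of times, whose expectation is again nonnegative. Expanding the polynomial identity
\[
\E\left[S^{d}\right] = \sum_{u_1,\dots,u_d \in V} \E\left[\prod_{i=1}^{d} X_{u_i}\right]
\]
into ordered tuples, every summand is therefore nonnegative, so discarding the tuples with repeated indices only decreases the total. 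This yields $\sum_{u_1,\dots,u_d} \E\left[\prod_i X_{u_i}\right] \le \E[S^d] \le \E\left[|S|^{d}\right]$, the left sum being exactly the lemma's (and if its tuples are allowed to repeat, the first inequality is simply an equality). This is the only place ferromagnetism is used, and it is precisely what fails for general couplings, where cancellations prevent bounding the sum of correlations by a single moment; the reduction of the general (non-ferromagnetic) case to this one is handled separately via the FK comparison.

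It then remains to bound $\E[|S|^d]$. Since $\theta_v = 0$ for all $v$, symmetry gives $\E[S]=0$, and flipping one coordinate changes $S$ by exactly $2$, so the Lipschitz constants obey $\sum_i l_i^2 = 4n$; Lemma~\ref{lem:lipschitz-lemma} then yields $\Pr[|S|>t] \le 2\exp\left(-\eta t^2/(8n)\right)$. I would bound the moment by truncation at $T = \sqrt{4 n d \log n/\eta}$, using the deterministic bound $|S| \le n$. This choice makes $\Pr[|S|>T] \le 2\exp\left(-(d/2)\log n\right) = 2 n^{-d/2}$, so
\[
\E\left[|S|^{d}\right] \le T^{d} + n^{d}\,\Pr[|S|>T] \le \left(\frac{4nd\log n}{\eta}\right)^{d/2} + 2 n^{d/2} \le 2\left(\frac{4nd\log n}{\eta}\right)^{d/2},
\]
where the last step uses $2 n^{d/2} \le (4nd\log n/\eta)^{d/2}$, valid since $\eta \le 1$ and $d,\log n \ge 1$. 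This is exactly the claimed bound.

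The genuinely delicate ingredient is the first step: the nonnegativity/monotonicity that lets the whole sum be absorbed into $\E[S^d]$; everything after it is routine given Lemma~\ref{lem:lipschitz-lemma}. I would also remark that the $\log n$ factor is slack introduced purely by the truncation: evaluating the sub-Gaussian moment directly through the Gamma integral $\int_0^\infty d\,t^{d-1} e^{-\eta t^2/(8n)}\,dt$ removes it and gives the sharper $O_d\!\left((n/\eta)^{d/2}\right)$, but the truncation argument is shorter and already suffices for the stated form.
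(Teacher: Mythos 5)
Your proposal is correct and takes essentially the same route as the paper: both reduce the sum of correlations to the moment $\E\left[\left(\sum_v X_v\right)^d\right]$ and bound that moment via the Lipschitz concentration lemma (Lemma~\ref{lem:lipschitz-lemma}) together with truncation at exactly the same threshold $2\sqrt{nd\log n/\eta}$, using the deterministic bound $\abss{\sum_v X_v}\le n$ on the tail event. The only difference is that you insert the FK/Griffiths nonnegativity step to justify discarding repeated-index tuples; the paper instead implicitly reads the lemma's sum as ranging over all ordered tuples, so its reduction is an identity and ferromagnetism plays no role in its proof of this lemma (it is invoked only later, in Corollary~\ref{cor:marginal-bound}, where the lemma is applied to the ferromagnetic comparison model $p^+$) --- your version is the more careful reading, and both are sound.
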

\begin{proof}
	We have,
	\begin{align}
	&\left(\sum_{v \in V} X_v\right)^d = \sum_{u_1,\ldots,u_d \in V} \prod_{i=1}^d X_{u_i} \\
	\implies &\E\left[ \sum_{u_1,\ldots,u_d} \prod_{i=1}^d X_{u_i} \right] = \E\left[ \left( \sum_v X_v \right)^d\right].
	\end{align}
	Since we are in high temperature, we have from Lemma \ref{lem:lipschitz-lemma} $\Pr\left[\abss{\sum_v X_v} > K  \right] \le 2\exp\left(-\frac{\eta K^2}{8n}\right)$. Since the maximum value of $\sum_v X_v = n$, we have for all $K>0$,
	\begin{align}
	\E\left[ \left( \sum_v X_v \right)^d\right] \le K^d + 2n^d\exp\left(-\frac{\eta K^2}{8n}\right).
	\end{align}
	Setting $K= 2\sqrt{nd\log n/\eta}$ we get,
	\begin{align}
	\E\left[ \sum_{u_1,\ldots,u_d} \prod_{i=1}^d X_{u_i} \right] = \E\left[ \left( \sum_v X_v \right)^d\right] \le 2\left(\frac{4nd\log n}{\eta}\right)^{d/2}.
	\end{align}
	
\end{proof}

For any Ising model $p$ on graph $G=(V,E)$ with parameter vector represented by $\th$, we associate a ferromagnetic Ising model denoted by $p^+$ defined on the same graph $G$ where all the edges retain their magnitude but are now forced to be ferromagnetic interactions. That is, $\abss{\th_{uv}^p} = \th_{uv}^{p^+}$ for all $u,v$.
We have the following relation between the marginals of $p$ and those of $p^+$.
\begin{lemma}
	\label{lem:ferro-dominates-nonferro}
	Consider any Ising model $p$ defined on $G=(V,E)$. Consider any subset of $k$ nodes $\{u_1,\ldots,u_k\} \subseteq V$. Then, 
	$$\E_{p}\left[ X_{u_1}X_{u_2}\ldots X_{u_k} \right] \le \E_{p^+}\left[ X_{u_1}X_{u_2}\ldots X_{u_k} \right].$$
\end{lemma}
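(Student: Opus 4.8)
The plan is to pass to the random-cluster (Fortuin--Kasteleyn) representation of both $p$ and its ferromagnetic modification $p^+$ and to compare them there. Write $J_{uv} = \th_{uv}$ and $A = \{u_1,\dots,u_k\}$ (the nodes are distinct, so $A$ is a set), and recall the assumption of no external field. I would use the signed FK expansion $e^{J_{uv}X_uX_v} = e^{|J_{uv}|}\big((1-q_{uv}) + q_{uv}\,\ind\{X_uX_v = \operatorname{sgn}(J_{uv})\}\big)$ with $q_{uv} = 1-e^{-2|J_{uv}|}$. Expanding the product over edges and then summing over spins (this is where the absence of an external field is essential, so the spin sum is the symmetric one) represents both models by bond configurations $\omega\subseteq E$ carrying the \emph{same} unnormalized weight $W(\omega) = \prod_{e\in\omega}q_e\prod_{e\notin\omega}(1-q_e)\,2^{c(\omega)}$, where $c(\omega)$ is the number of connected components of $(V,\omega)$. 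Crucially $W$ depends on the couplings only through the magnitudes $|J_{uv}|$, hence is identical for $p$ and $p^+$.

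The sign pattern enters only through the spin sum: an open edge of $p$ forces $X_uX_v = \operatorname{sgn}(J_{uv})$, so only bond configurations containing no \emph{frustrated cycle} (a cycle along which $\prod\operatorname{sgn}(J_{uv}) = -1$) contribute. Let $U$ denote this event. A short computation then shows that the random-cluster law of $p^+$ is $\phi^+(\omega)\propto W(\omega)$ over all $\omega$, while that of $p$ is exactly $\phi^+$ \emph{conditioned on} $U$. Assigning to each component an independent free sign (with fixed relative signs $\rho_v$ inside each component in the signed case) shows $\E_{p^+}[\prod_i X_{u_i}] = \Pr_{\phi^+}[\mathcal{E}]$, where $\mathcal E$ is the event that every component of $(V,\omega)$ meets $A$ in an even number of vertices, while $\E_{p}[\prod_i X_{u_i}] = \E_{\phi^+}[\,\epsilon(\omega)\,\ind_{\mathcal E}\mid U\,]$ for a sign $\epsilon(\omega)\in\{\pm1\}$. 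Bounding $\epsilon(\omega)\le 1$ reduces the lemma to the single inequality $\Pr_{\phi^+}[\mathcal E\mid U]\le \Pr_{\phi^+}[\mathcal E]$.

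This last step is where the real content lies. The naive term-by-term comparison of the two representations fails, because introducing signs shrinks both numerator (source set $A$) and denominator (partition function) simultaneously, so the ratio is not controlled. Instead I would observe that $\mathcal E$ is an \emph{increasing} event in $\omega$ (merging two components, each meeting $A$ evenly, leaves the count even, and the other components are unchanged) while $U$ is a \emph{decreasing} event (a frustrated cycle is never destroyed by adding bonds). Since the random-cluster measure with $q=2\ge 1$ satisfies the FKG inequality (positive association), an increasing and a decreasing event are negatively correlated, giving $\Pr_{\phi^+}[\mathcal E\cap U]\le \Pr_{\phi^+}[\mathcal E]\,\Pr_{\phi^+}[U]$, i.e. $\Pr_{\phi^+}[\mathcal E\mid U]\le \Pr_{\phi^+}[\mathcal E]$. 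Chaining this with the two identities above yields $\E_p[\prod_i X_{u_i}]\le \E_{p^+}[\prod_i X_{u_i}]$.

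The main obstacle is precisely this FKG step: identifying the correct monotonicity directions of $\mathcal E$ and $U$ and invoking positive association of the random-cluster model, rather than attempting a direct domination of the signed weights. Everything else is bookkeeping in the FK expansion. The odd-$k$ case is degenerate and needs no separate treatment: $\mathcal E$ is then impossible (the total intersection $|A|=k$ is odd), so both sides vanish, in agreement with the global spin-flip symmetry of the field-free model.
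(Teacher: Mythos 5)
Your proof is correct, and it amounts to a self-contained derivation of the machinery that the paper's own proof simply cites. The paper handles odd $k$ by symmetry and, for even $k$, invokes Newman's generalization of the FK representation to signed couplings, quoting two black boxes: the identity expressing the moment as a cluster-connectivity probability, and the stochastic domination of the signed percolation measure by its ferromagnetic counterpart (Equation (20) of Newman, plus a Griffiths exercise converting domination into the moment inequality). You instead rebuild the signed FK expansion explicitly, observe that the signed random-cluster law is exactly $\phi^+$ conditioned on the decreasing event $U$ (no frustrated cycle), identify $\E_{p^+}\left[\prod_i X_{u_i}\right]$ with $\Pr_{\phi^+}[\mathcal{E}]$ for the increasing even-intersection event $\mathcal{E}$, bound the sign $\epsilon(\omega)\le 1$, and close with FKG positive association for the $q=2$ random-cluster measure. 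That FKG step is precisely the standard proof of the domination the paper cites (conditioning a positively associated measure on a decreasing event pushes it down stochastically), so the two arguments share the same mathematical core; what yours buys is self-containedness and, incidentally, more precision: the correct ferromagnetic identity is the even-intersection event you use, whereas the paper's ``all belong to the same cluster'' phrasing is accurate only for $k=2$ (for $k\ge 4$, two disjoint strongly coupled edges give moment close to $1$ but same-cluster probability $0$). Both arguments rely on the absence of an external field, which you flag at the step where it is actually needed (the symmetric spin sum yielding the factor $2^{c(\omega)}$).
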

\begin{proof}
	If $k$ is odd, then the quantities on the LHS and RHS are both 0 and hence the Lemma holds.
	To handle the case when $k$ is even, we consider a generalization of the FK model to possible non-ferromagnetic Ising model. 
	This generalization is discussed in detail, for instance, in Newman's paper~\cite{Newman90}. The generalization retains many nice properties of the FK model. In particular, when $k$ is even, $\E_{p}\left[\prod_{i=1}^k X_{u_i}\right] = \Pr\left[X_{u_1},X_{u_2},\ldots,X_{u_k} \text{ belong to same cluster }\right]$ still holds.
	Equation (20) in Section 5 of~\cite{Newman90} notes that the percolation measure associated with any Ising model is stochastically dominated by the measure associated with its corresponding ferromagnetic Ising model. 
	A consequence of this stochastic dominance, noted in homework problem 3 in the chapter on Phase Transitions by Griffiths in~\cite{DeWittS71}, is the desired inequality
	$$\E_{p}\left[ X_{u_1}X_{u_2}\ldots X_{u_k} \right] \le \E_{p^+}\left[ X_{u_1}X_{u_2}\ldots X_{u_k} \right].$$
\end{proof}

\begin{lemma}
	\label{lem:ferro-dominates}
	Consider any Ising model $p$ defined on $G=(V,E)$. Consider any subset of $k$ nodes $\{u_1,\ldots,u_k\} \subseteq V$. Then,
	$$\abss{\E_{p}\left[ X_{u_1}X_{u_2}\ldots X_{u_k} \right]} \le \E_{p^+}\left[ X_{u_1}X_{u_2}\ldots X_{u_k} \right].$$
\end{lemma}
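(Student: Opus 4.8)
The plan is to combine the upper bound already established in Lemma~\ref{lem:ferro-dominates-nonferro} with a matching lower bound obtained from the gauge (spin-flip) symmetry of the field-free Ising model. Lemma~\ref{lem:ferro-dominates-nonferro} gives $\E_p[X_{u_1}\cdots X_{u_k}] \le \E_{p^+}[X_{u_1}\cdots X_{u_k}]$, and since $p^+$ is ferromagnetic its $k$-point correlation $\E_{p^+}[X_{u_1}\cdots X_{u_k}]$ is non-negative (by Griffiths' inequality, or equivalently the FK representation). Thus to conclude $\abss{\E_p[X_{u_1}\cdots X_{u_k}]} \le \E_{p^+}[X_{u_1}\cdots X_{u_k}]$ it suffices to prove the matching lower bound $\E_p[X_{u_1}\cdots X_{u_k}] \ge -\E_{p^+}[X_{u_1}\cdots X_{u_k}]$.

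The key observation is that flipping the sign of the spin at a single vertex is a measure-preserving relabeling that leaves the ferromagnetic companion $p^+$ unchanged. Concretely, I would fix signs $\epsilon_v \in \{\pm 1\}$, set $\tilde\theta_{uv} = \epsilon_u\epsilon_v\theta_{uv}$, and let $\tilde p$ be the corresponding field-free Ising model. Because there is no external field, the change of variables $Y_v = \epsilon_v X_v$ maps $p$ exactly to $\tilde p$ (the only terms in the exponent are the products $\theta_{uv}x_ux_v = \tilde\theta_{uv}y_uy_v$), so that $X \sim p$ implies $(\epsilon_v X_v)_v \sim \tilde p$. This yields the identity $\E_p[\prod_i X_{u_i}] = (\prod_i \epsilon_{u_i})\,\E_{\tilde p}[\prod_i X_{u_i}]$. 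Crucially $\abss{\tilde\theta_{uv}} = \abss{\theta_{uv}}$, so $\tilde p$ and $p$ share the same ferromagnetic companion, i.e.\ $\tilde p^+ = p^+$.

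I would then choose the flip $\epsilon_{u_1} = -1$ and $\epsilon_v = +1$ for all $v \ne u_1$, which gives $\prod_i \epsilon_{u_i} = -1$ and hence $\E_p[\prod_i X_{u_i}] = -\E_{\tilde p}[\prod_i X_{u_i}]$. Applying Lemma~\ref{lem:ferro-dominates-nonferro} to the model $\tilde p$ gives $\E_{\tilde p}[\prod_i X_{u_i}] \le \E_{\tilde p^+}[\prod_i X_{u_i}] = \E_{p^+}[\prod_i X_{u_i}]$, and substituting produces the desired lower bound $\E_p[\prod_i X_{u_i}] = -\E_{\tilde p}[\prod_i X_{u_i}] \ge -\E_{p^+}[\prod_i X_{u_i}]$. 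Combined with the upper bound of Lemma~\ref{lem:ferro-dominates-nonferro} this establishes $\abss{\E_p[\prod_i X_{u_i}]} \le \E_{p^+}[\prod_i X_{u_i}]$. The case of odd $k$ is immediate, since both sides vanish by the global spin-flip symmetry of the field-free model.

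I do not expect a genuine obstacle, since the substantive work (the stochastic-domination/FK comparison between a model and its ferromagnetic companion) is already carried out in Lemma~\ref{lem:ferro-dominates-nonferro}. The only points to verify carefully are that the single-spin gauge transformation is measure-preserving and that it fixes $p^+$; both rely essentially on the absence of an external field, so that flipping a spin changes only the signs of the incident edge interactions (preserving their magnitudes) and introduces no change in any linear term.
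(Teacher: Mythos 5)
Your proof is correct, and it shares the paper's overall decomposition: both arguments combine the upper bound of Lemma~\ref{lem:ferro-dominates-nonferro} with a matching lower bound $\E_{p}\left[\prod_i X_{u_i}\right] \ge -\E_{p^+}\left[\prod_i X_{u_i}\right]$, obtained by applying Lemma~\ref{lem:ferro-dominates-nonferro} to an auxiliary Ising model whose correlation equals $-\E_{p}\left[\prod_i X_{u_i}\right]$ and whose ferromagnetic companion is still $p^+$. Where you genuinely differ is in how that auxiliary model is built. The paper augments the graph with $k$ dummy vertices $\tilde u_1,\ldots,\tilde u_k$, attached by edges of strength $+\infty$ except for $(u_1,\tilde u_1)$, which gets $-\infty$; the sign flip is realized through the a.s.\ identities $X_{\tilde u_1}=-X_{u_1}$ and $X_{\tilde u_i}=X_{u_i}$, and one must separately verify that the augmentation leaves the marginals of the original vertices unchanged. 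This requires interpreting Ising models with infinite edge parameters (formally outside the paper's definition, so implicitly a limiting or conditioning argument) and requires Lemma~\ref{lem:ferro-dominates-nonferro} to apply to such degenerate models. Your gauge transformation $\tilde\theta_{uv}=\epsilon_u\epsilon_v\theta_{uv}$ with $\epsilon_{u_1}=-1$ sidesteps all of this: it stays within the class of finite-parameter, field-free Ising models, the identity $\E_{p}\left[\prod_i X_{u_i}\right]=-\E_{\tilde p}\left[\prod_i X_{u_i}\right]$ is an exact change of variables on $\{\pm 1\}^V$, and $\tilde p^+=p^+$ holds on the nose, so no marginal-preservation check is needed. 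Two minor remarks: your appeal to Griffiths' inequality is unnecessary, since the two-sided bound $-\E_{p^+}\le \E_p\le \E_{p^+}$ already yields $\abss{\E_p}\le\E_{p^+}$ (non-negativity of $\E_{p^+}$ is a byproduct, not an input); and, as you correctly flag, the argument uses the absence of external fields essentially, which matches the setting of Section~\ref{sec:multilinear} where the lemma is invoked.
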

\begin{proof}
	If we show that $-\E_{p}\left[ X_{u_1}X_{u_2}\ldots X_{u_k} \right] \le \E_{p^+}\left[ X_{u_1}X_{u_2}\ldots X_{u_k} \right]$, then together with Lemma \ref{lem:ferro-dominates-nonferro} we get the desired result. 
	To show the above inequality we build an Ising model $\tilde{p}$ and apply Lemma \ref{lem:ferro-dominates-nonferro} to it. $\tilde{p}$ is defined as follows. The set of vertices $V$ on which $p$ is defined is augmented with $k$ dummy vertices, $\tilde{u}_1,\tilde{u}_2,\ldots,\tilde{u}_k$ and the set of edges $E$ is augmented by the addition of the set of edges $\tilde{E}$:
	$$\tilde{E} = \left\{ (u_i,\tilde{u}_i) \text{ for } i=1,2,\ldots,k  \right\}.$$
	The parameters for the new edges are all set to $+ \infty$ except for the edge $(u_1,\tilde{u}_1)$ whose parameter is set to $-\infty$. 
	Under this construction, we have that
	\begin{align}
	\E_{\tilde{p}}\left[ X_{\tilde{u}_1}X_{\tilde{u}_2}\ldots X_{\tilde{u}_k} \right]  = -\E_{p}\left[ X_{u_1}X_{u_2}\ldots X_{u_k} \right]. \label{eq:fk1}
	\end{align}
	From Lemma \ref{lem:ferro-dominates-nonferro}, we have
	\begin{align}
	\E_{\tilde{p}}\left[ X_{\tilde{u}_1}X_{\tilde{u}_2}\ldots X_{\tilde{u}_k} \right] \le \E_{\tilde{p}^+}\left[ X_{\tilde{u}_1}X_{\tilde{u}_2}\ldots X_{\tilde{u}_k} \right] \label{eq:fk2}
	\end{align}
	And since all edges of the form $(u_i,\tilde{u_i})$ for $i=1,2,\ldots,k$ have a parameter of $+ \infty$ under $\tilde{p}^+$, we have
	\begin{align}
	&\E_{\tilde{p}^+}\left[ X_{\tilde{u}_1}X_{\tilde{u}_2}\ldots X_{\tilde{u}_k} \right] = \E_{\tilde{p}^+}\left[ X_{u_1}X_{u_2}\ldots X_{u_k} \right] \label{eq:fk3}
	\end{align}
	Finally, we observe that this construction doesn't change the values of any of the original marginals. In particular,
	\begin{align}
	\E_{\tilde{p}^+}\left[ X_{u_1}X_{u_2}\ldots X_{u_k} \right]  = \E_{p^+}\left[ X_{u_1}X_{u_2}\ldots X_{u_k} \right]. \label{eq:fk4}
	\end{align}
	(\ref{eq:fk1}),(\ref{eq:fk2}),(\ref{eq:fk3}) and (\ref{eq:fk4}) combined give us the desired result.
\end{proof}

Lemma \ref{lem:ferro-marginals-bound} together with Lemma \ref{lem:ferro-dominates} gives Corollary \ref{cor:marginal-bound}.
\begin{corollary}
	\label{cor:marginal-bound}
	Consider any Ising model $p$ at high temperature. Let $d$ be a positive integer. We have
	$$\abss{\sum_{u_1,\ldots,u_d} \E_{p}[X_{u_1}X_{u_2}\ldots,X_{u_d}]} \le 2\left(\frac{4nd\log n}{\eta}\right)^{d/2} .$$
\end{corollary}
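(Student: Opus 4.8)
The plan is to combine the two preceding lemmas in the obvious way: Lemma~\ref{lem:ferro-dominates} lets us replace each individual signed marginal of the arbitrary model $p$ by the corresponding nonnegative marginal of its ferromagnetic counterpart $p^+$, and Lemma~\ref{lem:ferro-marginals-bound} then controls the entire sum of those ferromagnetic marginals.

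First I would apply the triangle inequality to pull the outer absolute value inside the sum:
$$\abss{\sum_{u_1,\ldots,u_d} \E_{p}[X_{u_1}\cdots X_{u_d}]} \le \sum_{u_1,\ldots,u_d} \abss{\E_{p}[X_{u_1}\cdots X_{u_d}]}.$$
Then I would bound each summand termwise via Lemma~\ref{lem:ferro-dominates}. When the tuple $(u_1,\ldots,u_d)$ has repeated entries the product collapses using $X_u^2 = 1$ to a product over the distinct nodes appearing an odd number of times, so the lemma (stated for distinct nodes) applies to each term and yields $\abss{\E_{p}[X_{u_1}\cdots X_{u_d}]} \le \E_{p^+}[X_{u_1}\cdots X_{u_d}]$. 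Summing gives
$$\sum_{u_1,\ldots,u_d} \abss{\E_{p}[X_{u_1}\cdots X_{u_d}]} \le \sum_{u_1,\ldots,u_d} \E_{p^+}[X_{u_1}\cdots X_{u_d}].$$

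Next I would invoke Lemma~\ref{lem:ferro-marginals-bound} on $p^+$. The one hypothesis worth checking is that $p^+$ is itself a high-temperature ferromagnetic model. It is ferromagnetic by construction, and since $\abss{\th_{uv}^{p}} = \th_{uv}^{p^+}$ for every edge, the Dobrushin quantity $\max_{v}\sum_{u\ne v}\tanh(\abss{\th_{uv}})$ is identical for $p$ and $p^+$; hence $p^+$ lies in the same $\eta$-high-temperature regime as $p$. Lemma~\ref{lem:ferro-marginals-bound} then gives
$$\sum_{u_1,\ldots,u_d} \E_{p^+}[X_{u_1}\cdots X_{u_d}] \le 2\left(\frac{4nd\log n}{\eta}\right)^{d/2},$$
and chaining the three displays proves the corollary.

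There is no genuine obstacle here, as this is a direct corollary; the only points I would be careful to flag are the two implicit facts above. The first is the high-temperature invariance under $\th \mapsto \abss{\th}$. The second is that for the ferromagnetic model $p^+$ every marginal $\E_{p^+}[\prod_i X_{u_i}]$ is nonnegative (Griffiths' inequality, equivalently the FK cluster interpretation already used in Lemma~\ref{lem:ferro-dominates-nonferro}), so that the left-hand side of Lemma~\ref{lem:ferro-marginals-bound} genuinely equals the sum of absolute values and no additional triangle-inequality slack is incurred when passing from $\sum \abss{\E_{p^+}[\cdots]}$ to $\sum \E_{p^+}[\cdots]$.
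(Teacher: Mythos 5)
Your proof is correct and follows exactly the paper's own argument: triangle inequality, termwise domination via Lemma~\ref{lem:ferro-dominates}, and then Lemma~\ref{lem:ferro-marginals-bound} applied to $p^+$. The two implicit facts you flag (that $p^+$ inherits the $\eta$-high-temperature condition since Dobrushin's quantity depends only on $\abss{\th_{uv}}$, and that repeated indices collapse via $X_u^2=1$) are indeed left unstated in the paper, so flagging them is a small improvement in rigor rather than a deviation.
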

\begin{proof}
	We have,
	\begin{align*}
	&\abss{\sum_{u_1,\ldots,u_d} \E_{p}[X_{u_1}X_{u_2}\ldots,X_{u_d}]} \le \sum_{u_1,\ldots,u_d} \abss{\E_{p}[X_{u_1}X_{u_2}\ldots,X_{u_d}]} \\
	&\le \sum_{u_1,\ldots,u_d} \E_{p^+}[X_{u_1}X_{u_2}\ldots,X_{u_d}] \le 2\left(\frac{4nd\log n}{\eta}\right)^{d/2}.
	\end{align*}
\end{proof}


\subsection{Main Theorem Statement for $d$-Linear Functions}
\label{sec:multilinear-induction}
We are now ready to show concentration of measure for $d$-linear functions.
First, we define a notion of a `good' set of configurations corresponding to a $d$-linear function $f_a(x)$ similar to how it was defined in Section~\ref{sec:bilinear}. Doing so will help us define a stopping time for the martingale sequence we consider later on in the argument.
For any multilinear function $f_a(x)$ of degree $d$, $K > 0$, and $t_1 \ge t$, define the set $G_K^{a,d}(t_1,t)$ to be the following set of configurations:
\small
\begin{align}
&G_K^{a,d}(t_1,t) = \left\{ x_t \in \Omega \middle| \: \forall 1\le l \le d-1,\: \forall f \in F_a^d(l) \: \max\{\abss{\E[f(X_{t_1})|X_t=x_t]}, \abss{\E[f(X_{t_1-1})|X_t=x_t]}\} \le K^{l/(d-1)}\right\} \notag\\
&~ \bigcap \left\{ x_t \middle| \: \forall 1\le l \le d-1,\: \forall f \in F_a^d(l) \: \Pr\left[ \abss{f(X_{t_1}) - \E\left[ f(X_{t_1}) | X_t \right]} > K^{l/(d-1)} \middle| X_t=x_t  \right] \le 2\exp\left(-\frac{K^{2/(d-1)}}{c_1(l)t_1} \right) \right\} \label{eq:multilinear-Gt}\\
&~ \bigcap \left\{ x_t \middle| \: \forall 1\le l \le d-1,\: \forall f \in F_a^d(l) \: \Pr\left[ \abss{f(X_{t_1-1}) - \E\left[ f(X_{t_1-1}) | X_t \right]} > K^{l/(d-1)} \middle| X_t=x_t  \right] \le 2\exp\left(-\frac{K^{2/(d-1)}}{c_1(l)t_1} \right) \right\} \notag
\end{align}
\normalsize
where $\E[f(X_{t_1}) | X_{t}]$, is defined as 0 for $t > t_1$ for any function $f$ and $c_1(l)>0$ is a function of $l$ which is sufficiently large. The definition may seem complicated at this moment but its usefulness will become more apparent once we delve into the proof of Theorem \ref{thm:multilinear-induction}.

$G_K^{a,d}(t_1,t)$ was deliberately constructed so as to satisfy the following layering property which will be very useful in the inductive argument of Theorem \ref{thm:multilinear-induction}.
The following corollary is immediate from the definition of $G_K^{a,d}(t_1,t)$.
\begin{corollary}
	\label{cor:goodset-layering}
	Suppose $f_a(x)$ is a $d$-linear function and $x \in G_K^{a,d}(t_1,t)$. For any $v \in V$, let $a^v$ represent the coefficient vector of the $(d-1)$-linear function $f_a^v(x)$. Then $x \in G_{\hat{K}}^{a^v,d-1}(t_1,t)$ where $\hat{K} = K^{(d-2)/(d-1)}$.
\end{corollary}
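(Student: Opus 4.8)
The plan is to unfold both membership conditions and verify that every constraint defining $G_{\hat K}^{a^v,d-1}(t_1,t)$ already appears among those defining $G_K^{a,d}(t_1,t)$, so that the inclusion reduces to reindexing the function families and matching exponents, with no new probabilistic input. First I would pin down how the family $F_{a^v}^{d-1}$ sits inside $F_a^d$. Writing $a^v$ for the coefficient vector of $f_a^v$, so that $(a^v)_{u_1\ldots u_{d-1}} = a_{u_1\ldots u_{d-1}v}$, the identity to establish is that for distinct $w_1,\ldots,w_k \in V\setminus\{v\}$,
\[
(f_{a^v})^{w_1,\ldots,w_k} = f_a^{w_1,\ldots,w_k,v}.
\]
This follows by expanding both sides with Definition~\ref{def:function-family}: each equals $\sum a_{u_1\ldots u_{d-k-1}w_1\ldots w_k v}X_{u_1}\cdots X_{u_{d-k-1}}$, the sum running over distinct $u_i$ avoiding the fixed coordinates. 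Taking $k=(d-1)-l$, the fixed set $\{w_1,\ldots,w_{(d-1)-l},v\}$ has size $d-l$, so this derived function has degree $l$ and lies in $F_a^d(l)$. Hence under this identification $F_{a^v}^{d-1}(l)\subseteq F_a^d(l)$ for every $1\le l\le d-2$.

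Second, I would match the thresholds. For each $g\in F_{a^v}^{d-1}(l)$, membership in $G_{\hat K}^{a^v,d-1}(t_1,t)$ bounds $\abss{\E[g(X_{t_1})|X_t]}$ and $\abss{\E[g(X_{t_1-1})|X_t]}$ by $\hat K^{l/(d-2)}$ and bounds the associated tail probabilities by $2\exp(-\hat K^{2/(d-2)}/(c_1(l)t_1))$. Substituting $\hat K=K^{(d-2)/(d-1)}$ gives the elementary identities $\hat K^{l/(d-2)}=K^{l/(d-1)}$ and $\hat K^{2/(d-2)}=K^{2/(d-1)}$, which are exactly the threshold and tail exponent that $G_K^{a,d}(t_1,t)$ (see~\eqref{eq:multilinear-Gt}) imposes on the same function regarded as an element of $F_a^d(l)$. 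This matching is precisely what dictates the choice of $\hat K$.

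Combining the two steps, every conditional-expectation bound and tail bound required for $x\in G_{\hat K}^{a^v,d-1}(t_1,t)$ is already guaranteed by $x\in G_K^{a,d}(t_1,t)$, whose quantifiers range over the (possibly larger) family $F_a^d(l)$ and the wider index range $1\le l\le d-1$; this yields the claimed inclusion. The only delicate point --- and the closest thing to an obstacle --- is the bookkeeping of the first step: one must check that fixing the coordinate $v$ and then forming the derived family commutes with first passing to $f_a^v$ and forming its derived family, and that the distinctness and support conditions on the fixed vertices are compatible. Once this reindexing is checked the corollary is immediate, consistent with its being stated as immediate from the definition.
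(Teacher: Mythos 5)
Your proposal is correct and matches the paper's reasoning exactly: the paper offers no written proof, declaring the corollary ``immediate from the definition,'' and the intended verification is precisely your two steps --- identifying $(f_{a^v})^{w_1,\ldots,w_k} = f_a^{w_1,\ldots,w_k,v}$ so that $F_{a^v}^{d-1}(l) \subseteq F_a^d(l)$, and checking that $\hat{K}^{l/(d-2)} = K^{l/(d-1)}$ and $\hat{K}^{2/(d-2)} = K^{2/(d-1)}$ make the thresholds and tail exponents coincide. Nothing further is needed.
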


We will obtain the desired concentration bound by showing that the following set of statements hold for any $d$-linear function $f_a(X_{t^*})$ (where $d$ is a constant) with bounded coefficients ($\inftynorm{a} \le 1 $). To show Statement (\ref{stmt:freedman-conc}) of Theorem~\ref{thm:multilinear-induction}, we will use Theorem~\ref{thm:hybrid-supplement-induction}. 
\begin{theorem}
	\label{thm:multilinear-induction}
	Consider an Ising model $p$ in the $\eta$-high temperature regime. Let $\tmix = n\log n / \eta$ denote the mixing time of the Glauber dynamics associated with $p$. Let $f_a : \Omega \rightarrow \mathbb{R}$ be any $d$-linear function for some $d \ge 1$, such that $f_a(x) = \sum_{u_1, u_2, \ldots, u_d} a_{u_1u_2 \ldots u_d} x_{u_1}x_{u_2}\ldots x_{u_d}$ where $a \in [-1,1]^{V \choose d}$. Let $2\tmix \le t^* \le (n+1)\tmix$.
	\begin{enumerate}
		\item Let $X_0 \sim p$. Consider a run of the Glauber dynamics associated with $p$ running for $t^*$ steps: $X_0,X_1,\ldots,X_{t^*}$. For any $0 \le t_0 \le t^*$, there exist $c(d),c_2(d)>0$ which are increasing functions of $d$ only, such that, for any $\rad > c(d)(n\log^2 n/\eta)^{d/2}$, we have,
		$$\Pr\left[ \abss{f_a(X_{t^*}) - \E[f_a(X_{t^*}) | X_{t_0}] } \ge \rad \right] \le 2\exp\left( -\frac{\rad^{2/d}}{c_2(d)t^*} \right).$$ \label{stmt:freedman-conc}

		\item If $X \sim p$ is a sample from the Ising model, there exist $c(d),c_3(d)>0$ which are increasing functions of $d$ only, such that, for any $\rad > c(d)(n\log^2 n/\eta)^{d/2}$, 
		$$\Pr\left[\abss{f_a(X) - \E[f_a(X)]} \ge \rad  \right] \le 2\exp\left( -\frac{r^{2/d}\eta}{c_3(d)n\log n} \right).$$ \label{stmt:main-conc}
		
		\item For any $0 \le t_0 \le t^*$, there exist $c(d),c_4(d)>0$ which are increasing functions of $d$ alone, such that, for any $\rad > c(d)(n\log^2 n/\eta)^{d/2}$, 
		$$\Pr\left[ \abss{\E\left[ f_a(X_{t^*}) | X_{t_0} \right]} \ge \rad \right] \le 2\exp\left( - \frac{\rad^{2/d}}{c_4(d)t^*}\right).$$ \label{stmt:conc-of-cond-expectation}
	\end{enumerate}
\end{theorem}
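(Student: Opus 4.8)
The plan is to prove all three statements simultaneously by induction on the degree $d$, taking the linear case ($d=1$, immediate from Lemma~\ref{lem:linear} and Lemma~\ref{lem:lipschitz-lemma}) as the base case and the bilinear argument of Section~\ref{sec:bilinear} as the template for the inductive step. Within the step for degree $d$, Statement~\ref{stmt:freedman-conc} is the heart of the matter: Statement~\ref{stmt:main-conc} will follow from it by removing the conditioning through mixing, and Statement~\ref{stmt:conc-of-cond-expectation} will follow from the first two by a short triangle-inequality argument. Concretely, for \ref{stmt:conc-of-cond-expectation} I would first note that $\E[f_a(X)]$ is $O_d((n\log n/\eta)^{d/2})$ by Corollary~\ref{cor:marginal-bound}, hence below the radius $r$, and then observe that if $|\E[f_a(X_{t^*})\mid X_{t_0}]|\ge 3r$ then either $|f_a(X_{t^*})-\E[f_a(X_{t^*})\mid X_{t_0}]|\ge r$ (controlled by \ref{stmt:freedman-conc}) or $|f_a(X_{t^*})-\E[f_a(X_{t^*})]|\ge r$ (controlled by \ref{stmt:main-conc}); a union bound finishes it, and since $1/t^*=\Theta(\eta/(n\log n))$ the two scalings agree.

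For Statement~\ref{stmt:freedman-conc} I would mirror the bilinear proof: form the Doob martingale $B_i=\E[f_a(X_{t^*})\mid X_i]$ as in Definition~\ref{def:bilinear-doob-martingale}, and introduce the stopping time driven by the good set $G_K^{a,d}(t^*,t)$ of~\eqref{eq:multilinear-Gt}, choosing $K$ so that $K^{2/(d-1)}=r^{2/d}$, i.e.\ $K=r^{(d-1)/d}$. With this choice the level-$l$ thresholds $K^{l/(d-1)}=r^{l/d}\approx n^{l/2}$ are exactly the radii at which the $l$-linear members of $F_a^d(l)$ concentrate, so the inductive hypotheses apply cleanly: the expectation bounds $|\E[f(X_{t^*})\mid X_t]|\le K^{l/(d-1)}$ are supplied by Statement~\ref{stmt:conc-of-cond-expectation} at degree $l<d$, and the deviation bounds are supplied by Statement~\ref{stmt:freedman-conc} at degree $l<d$ (whose exponent $s^{2/l}/t^*$ evaluated at $s=r^{l/d}$ is precisely $r^{2/d}/t^*$, matching the form in~\eqref{eq:multilinear-Gt}). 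A union bound over the $\mathrm{poly}(n)$ functions in $F_a^d$ and over $t\le t^*$ then shows the stopping time exceeds $t^*$ with high probability, exactly as in Lemmas~\ref{lem:goodset-largeprob} and~\ref{lem:bilinear-stopping-time-large}.

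The crux is bounding the martingale increments $|B_{i+1}-B_i|$ before the stopping time, generalizing Lemma~\ref{lem:bilinear-martingale-increment-bound}. As in the bilinear case I would couple a chain started at $X_{i+1}$ with one started at $X_i'$ under the greedy coupling and expand $f_a(X_{t^*})-f_a(X_{t^*-1}')$ using multilinearity, producing a sum of (single-coordinate differences) times $(d-1)$-linear ``derivative'' functions, whose control on the good set is guaranteed by the layering property (Corollary~\ref{cor:goodset-layering}). The new difficulty for $d\ge 3$, and the main obstacle, is that these $(d-1)$-linear factors are \emph{hybrid}: their variables are drawn partly from $X_{t^*}$ and partly from the coupled run $X_{t^*-1}'$, so they are not literally members of $F_a^d$ and the inductive concentration statements do not apply to them verbatim. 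Handling this is exactly the purpose of Theorem~\ref{thm:hybrid-supplement-induction}, itself proved by induction on $d$, which certifies that hybrid $(d-1)$-linear functions obey the same expectation and tail bounds as their non-hybrid counterparts, using that the two runs start within Hamming distance $O(1)$ and contract under Lemma~\ref{lem:greedy-properties}. Granting this, each hybrid factor is $O(K)$ with high probability, the Hamming contraction collapses the coordinate-difference sum to $O(1)$ in expectation, and one obtains $|B_{i+1}-B_i|=O(K)$ up to negligible additive terms.

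With the increment bound $K_1=O(K)=O(r^{(d-1)/d})$ and the resulting conditional-variance bound $V_{t^*}=O(t^*K^2)$ (proved as in Lemma~\ref{lem:martingale-increment-variance} via a ``nice sets'' claim), I would apply Freedman's inequality (Lemma~\ref{lem:freedman}) exactly as in Lemma~\ref{lem:apply-freedman}. Since $r^{1/d}\le n\le t^*$, the variance term dominates $rK_1$, so the exponent is $\Theta(r^2/(t^*K^2))=\Theta(r^{2/d}/t^*)$, yielding Statement~\ref{stmt:freedman-conc}; the threshold $r>c(d)(n\log^2 n/\eta)^{d/2}$ is precisely what makes both the stopping-time failure probability and the $n^{O(d)}e^{-r^{2/d}/t^*}$ correction terms negligible. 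Finally, Statement~\ref{stmt:main-conc} follows by replacing the conditioning on $X_0$ with an unconditional bound through Lemma~\ref{lem:close-to-stationary}, exactly as Lemma~\ref{lem:bilinear} does in the bilinear case, converting the $1/t^*$ in the exponent into $\eta/(n\log n)$.
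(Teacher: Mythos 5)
Your proposal is correct and follows essentially the same route as the paper's proof: induction on the degree with the $d=1$ base case from Lemma~\ref{lem:linear}/Lemma~\ref{lem:lipschitz-lemma}, the Doob martingale and stopping time driven by the good set \eqref{eq:multilinear-Gt} with $K=r^{(d-1)/d}$, increment bounds via the greedy coupling and the hybrid-function Theorem~\ref{thm:hybrid-supplement-induction}, Freedman's inequality for Statement~\ref{stmt:freedman-conc}, mixing (Lemma~\ref{lem:close-to-stationary}) for Statement~\ref{stmt:main-conc}, and the triangle-inequality argument with Corollary~\ref{cor:marginal-bound} for Statement~\ref{stmt:conc-of-cond-expectation}. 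The only differences are cosmetic (e.g., your $3r$ split versus the paper's $r/2$ split, and minor phrasing of which index the hybrid induction runs over), so there is nothing substantive to flag.
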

\begin{proof}
	The proof will proceed by induction on $d$.
	
	\noindent \textbf{Base Case $d=1$:} Statement~\ref{stmt:freedman-conc} follows from Statement 3 of Lemma \ref{lem:linear}.
	Statement~\ref{stmt:main-conc} of the Theorem follows immediately from Lemma \ref{lem:lipschitz-lemma} applied to linear functions.
	Statement~\ref{stmt:conc-of-cond-expectation} follows from Statement 2 of Lemma \ref{lem:linear}. 
	Hence, we have shown that Theorem~\ref{thm:multilinear-induction} holds when $d=1$.\\

	\noindent \textbf{Inductive Hypothesis:} Suppose the statements of the theorem hold for some $d > 1$. We will now show that they hold for $d+1$.
	
	\noindent \textbf{Statement~\ref{stmt:freedman-conc}:} We aim to show this statement for $d+1$-linear functions. We will use Freedman's inequality in a similar manner as was done in Section \ref{sec:bilinear}. We begin by defining a martingale sequence associated with $f_a(x)$.
	\begin{definition}[The $d$-Linear Martingale Sequence]
		\label{def:multilinear-doob-martingale}
		Let $X_0 \in \Omega = \{\pm 1\}^n$ be a starting state. Consider a walk of the Glauber dynamics starting at $X_0$ and running for $t^*$ steps: $X_0,X_1,\ldots,X_{t^*}$. $X_{t^*}$ can be viewed as a function of all the random choices made by the dynamics up to that point. That is, $X_{t^*} = h(X_0,R_1,\ldots,R_{t^*})$ where $R_i$ is a random variable representing the random choices made by the dynamics in step $i$. Hence $f_a(X_{t^*}) = \tilde{f}_{a}(X_0,R_1,\ldots,R_{t^*})$ where $\tilde{f}_a = f_a \circ h$.
		Consider the Doob martingale associated with $\tilde{f}_{a}$ defined on the probability space $(O,2^{O},P)$ where $O$ is the set of all possible values of the variables $X_0,X_1,X_2,\ldots,X_{t^*}$ under the Glauber dynamics and $P$ is the function which assigns probability to events in $2^O$ according to the underlying Glauber dynamics. Also consider the increasing sequence of sub-$\sigma$-fields $2^{O_0} \subset 2^{O_1} \subset 2^{O_2} \subset \ldots 2^{O_{t^*}} = 2^O$ where $O_i$ is the set of all possible values to the variables $X_0,X_1,X_2,\ldots,X_i$ under the Glauber dynamics. The terms in the martingale sequence are as follows:
		\begin{align}
		B_0 &= \E\left[\tilde{f}_{a}(X_0,R_1,\ldots,R_{t^*}) \middle| X_0\right]\notag \\
		&\cdots \notag \\
		B_i &= \E[\tilde{f}_{a}(X_0,R_1,\ldots,R_{t^*}) | X_0,R_1,\ldots,R_i] \label{eq:multilinear-doob}\\
		&\cdots \notag \\
		B_{t^*} &= \tilde{f}_{a}(X_0,R_1,\ldots,R_{t^*}) \notag
		\end{align}
		
		Since the dynamics are Markovian, we can also write $B_i$ as follows:
		\begin{align*}
		B_i = \E[f_a(X_{t^*}) | X_i] \quad \forall \: 0 \le i \le t^*.
		\end{align*}
	\end{definition}
	
	Next, we define a stopping time $T_K$ on the above martingale sequence. The definition generalizes the stopping time defined in Section \ref{sec:bilinear} by requiring that many conditional expectations are small together.
	\begin{definition}[Stopping Time for $d$-linear functions]
		\label{def:multilinear-stopping-time}
		Consider the martingale sequence defined in Definition~\ref{def:multilinear-doob-martingale}.
		Let $T_K : O \rightarrow \{0\} \bigcup \mathbb{N}$ be a stopping time defined as follows:
		\begin{align*}
		T_K = \min\{\min_{t \ge 0} \left\{ t \:\: \middle| t \notin G_K^{a,d+1}(t^*,t) \right\}, t^*+1\}.
		\end{align*}
		Note that the event $\{T_K = t\}$ lies in the $\sigma$-field $2^{O_t}$ and hence the above definition is a valid stopping time.
	\end{definition}

	Using the induction hypothesis, we will show that the stopping time defined above is large with a good probability for the parameter range which is of interest to us.
	\begin{lemma}
		\label{lem:multilinear-goodset-largeprob}
		For any $t \ge 0$, $t^* \le (n+1)\tmix$, there exists $c(d)>0$ such that, for any $K > c(d)(n\log^2 n/\eta)^{d/2}$,
		$$\Pr\left[ X_t \notin G_K^{a,d+1}(t^*,t) \right] \le 8dn^{d+1}\exp\left(-\frac{K^{2/d}}{2c_1(d)t^*}\right),$$
		where $c_1(d)$ is as defined in (\ref{eq:multilinear-Gt}).
	\end{lemma}
	\begin{proof}
		For any $1 \le k \le d+1$, and $v_1,v_2,\ldots, v_k \in V$, let $E_K(v_1,v_2,\ldots,v_k)$ be the following event:
		$$E_K(v_1,v_2,\ldots,v_k) = \max\left\{\abss{\E[f_a^{v_1,v_2,\ldots,v_k}(X_{t^*})|X_t=x_t]}, \abss{\E[f_a^{v_1,v_2,\ldots,v_k}(X_{t^*-1})|X_t=x_t]}\right\} > K^{(d+1-k)/d}.$$
		Since $X_0$ is a sample from the stationary distribution $p$ of the dynamics, it follows from the property of stationary distributions that $X_t$ is also a sample from $p$. Hence we have, from the induction hypothesis, Statement~\ref{stmt:conc-of-cond-expectation} for multilinear functions of degree $\le d$, and a union bound, that for any $1 \le k \le d+1$, $v_1,v_2,\ldots, v_k \in V$, and for $K > c(d)(n\log^2 n/\eta)^{d/2}$,
		\begin{align}
		\Pr\left[ \: E_K(v_1,v_2,\ldots,v_k) \right] \le 4\exp\left( -\frac{K^{2/d}}{c_4(d+1-k)t^*}  \right) \label{eq:multilinear-gt1}
		\end{align}
		
		From (\ref{eq:multilinear-gt1}) and a union bound, we get
		\begin{align}
		&\Pr\left[ \: \exists 1 \le k \le d+1 \text{ and } v_1,v_2,\ldots,v_k \in V \: \text{s.t. } E_K(v_1,v_2,\ldots,v_k) \right] \notag\\
		&~~~\le \sum_{k=1}^{d+1} 4n^k\exp\left(-\frac{K^{2/d}}{c_4(d+1-k)t^*}\right) \le 4dn^{d+1}\exp\left(-\frac{K^{2/d}}{c_4(d+1)t^*}\right) \label{eq:multilinear-gt2}.
		\end{align}
		For any $1 \le k \le d+1$, $v_1,v_2,\ldots, v_k \in V$, let $D_K(v_1,v_2,\ldots,v_k)$ denote the following event:
		\begin{align*}
		&D_K(v_1,v_2,\ldots,v_k) = \max\left\{\abss{f_a^{v_1,v_2,\ldots,v_k}(X_{t^*}) - \E\left[ f_a^{v_1,v_2,\ldots,v_k}(X_{t^*}) | X_t \right]}, \right. \\ 
		&~~~~~\left. \abss{f_a^{v_1,v_2,\ldots,v_k}(X_{t^*-1}) - \E\left[ f_a^{v_1,v_2,\ldots,v_k}(X_{t^*-1}) | X_t \right]} \right\} > K^{(d+1-k)/d}.
		\end{align*}
		Let $D_K^{a}(t,k)$ be the event defined as 
		$$D_K^a(t,k) =\exists \: v_1,v_2,\ldots,v_k \in V\ \textrm{such that}\  D_K(v_1,v_2,\ldots,v_k).$$
		From the inductive hypothesis, Statement~\ref{stmt:freedman-conc} for multilinear functions of degree $(d+1-k) (\le d)$, and a union bound, we have,
		\begin{align}
		&\Pr\left[ D_K^a(t,k) \right] = \E\left[ \Pr\left[ D_K^a(t,k) | X_t \right] \right] \le 4n^k\exp\left(-\frac{K^{2/d}}{c_2(d+1-k)t^*}\right) \notag \\
		\implies  &\Pr\left[ \Pr\left[ D_K^a(t,k) | X_t \right] > \exp\left(-\frac{K^{2/d}}{c_1(d+1-k)t^*}\right) \right] \le 4n^k\exp\left(-\frac{K^{2/d}}{2c_2(d+1-k)t^*}\right) \label{eq:multilinear-gt3}\\
		\implies &\Pr \left[ \bigcup_{k=1}^{d+1} \Pr\left[ D_K^a(t,k) | X_t \right] > \exp\left(-\frac{K^{2/d}}{c_1(d+1-k)t^*}\right) \right] \le \sum_{k=1}^{d+1}4n^l\exp\left(-\frac{K^{2/d}}{2c_2(d+1-k)t^*}\right) \notag \\
		&~~~~\le 4dn^{d+1}\exp\left(-\frac{K^{2/d}}{2c_2(d)t^*}\right) \label{eq:multilinear-gt4}
		\end{align}
		where (\ref{eq:multilinear-gt3}) follows from Markov's inequality (and holds for sufficiently large $c_2(d)$) and (\ref{eq:multilinear-gt4}) follows from a union bound.
		Hence,
		\begin{align}
		&\Pr\left[ \bigcup_{k=1}^{d+1} \exists \: v_1,v_2,\ldots,v_k \in V \: | \: \Pr[D_K(v_1,v_2,\ldots,v_k)] > \exp\left(-\frac{K^{2/d}}{c_1(d+1-k)t^*}\right) \right] \notag\\
		& \le \Pr\left[ \bigcup_{k=1}^{d+1} \Pr\left[ D_K^a(t,k) | X_t \right] > \exp\left(-\frac{K^{2/d}}{c_1(d+1-k)t^*}\right) \right] \le 4dn^{d+1}\exp\left(-\frac{K^{2/d}}{2c_2(d)t^*}\right) \label{eq:multilinear-gt5}.
		\end{align}
		From (\ref{eq:multilinear-gt2}) and (\ref{eq:multilinear-gt5}), we have,
		\begin{align*}
		& \Pr\left[ \neg G_K^{a,d+1}(t^*,t) \right] \le 4dn^{d+1}\exp\left(-\frac{K^{2/d}}{c_4(d)t^*}\right) + 4dn^{d+1}\exp\left(-\frac{K^{2/d}}{2c_2(d)t^*}\right)  \le 8dn^{d+1}\exp\left(-\frac{K^{2/d}}{2c_2(d)t^*}\right) \\
		\end{align*}
	\end{proof}

	\begin{lemma}
		\label{lem:multilinear-stopping-time-large}
		For $t^* \le (n+1)\tmix$, there exists $c(d)>0$ such that, for any $K > c(d)(n\log^2 n/\eta)^{d/2}$,
		$$\Pr\left[ t^* \ge T_K \right] \le 8dt^* n^{d+1}\exp\left(-\frac{K^{2/d}}{2c_2(d)t^*}\right),$$
		where $c_2(d)$ is as defined in Theorem~\ref{thm:multilinear-induction}.
	\end{lemma}
	\begin{proof}
		From Lemma~\ref{lem:multilinear-goodset-largeprob}, we have,
		\begin{align*}
		& \Pr\left[ X_t \notin G_K^{a,d+1}(t^*,t) \right] \le 8dn^{d+1}\exp\left(-\frac{K^{2/d}}{2c_2(d)t^*}\right) \\
		\implies &\Pr[t^* \ge T_K] = \Pr\left[ \bigcup_{t=0}^{t^*} X_t \notin G_K^{a,d+1}(t^*,t) \right] \\
		& \le \sum_{t=0}^{t^*}\Pr\left[X_t \notin G_K^{a,d+1}(t^*,t) \right] \le 8dt^* n^{d+1}\exp\left(-\frac{K^{2/d}}{2c_2(d)t^*}\right).
		\end{align*}
	\end{proof}
	
	Now we will argue that the increments of the martingale are bounded up until stopping time. 
	\begin{lemma}
		\label{lem:multilinear-martingale-increment-bound}
		Consider the Doob martingale defined in Definition~\ref{def:bilinear-doob-martingale}. Suppose $X_i \in G_K^{a,d+1}(t^*,i)$ and $X_{i+1} \in G_K^{a,d+1}(t^*,i+1)$. For $K > c(d)(n\log^2 n/\eta)^{d/2}$, and a large enough constant $c_5$,
		\begin{align*}
		\abss{B_{i+1} - B_i} \le 2d^2 c_5 K.
		\end{align*}
	\end{lemma}
	\begin{proof}
	For ease of exposition, we will refer to $G_K^{a,d+1}(t^*,i)$ as simply $G_i$ in the following proof.
	\small
	\begin{align}
	&\abss{B_{i+1}-B_i} = \abss{\E\left[f_a(X_{t^*}) \middle| X_{i+1}\right] - \E\left[f_a(X_{t^*}) \middle| X_{i}\right]} \\ 
	&~~\le \max_{\substack{x,y: \dh(x,y)=1,\\ x\in G_{i+1}, y \in G_{i}}} \abss{\E\left[f_a(X_{t^*}) \middle| X_{i+1}=x\right] - \E\left[f_a(X_{t^*}') \middle| X_{i}'=y\right]} \label{eq:mmd1}\\
	&~~\le \max_{\substack{x,y: \dh(x,y)=1,\\ x\in G_{i+1}, y \in G_{i}}} \abss{\E\left[f_a(X_{t^*}) \middle| X_{i+1}=x\right] - \E\left[f_a(X_{t^*-1}') \middle| X_{i}'=y\right]} + \abss{\E\left[f_a(X_{t^*-1}') \middle| X_{i}'=y\right] - \E\left[f_a(X_{t^*}') \middle| X_{i}'=y\right]} \label{eq:mmd2}\\
	&~~\le \max_{\substack{x,y: \dh(x,y)=1,\\ x\in G_{i+1}, y \in G_{i}}} \abss{\E\left[f_a(X_{t^*})-f_a(X_{t^*-1}') \middle| X_{i+1}=x,X_{i}'=y\right]} \label{eq:mmd3}\\
	&~~~+ \max_{\substack{x,y: \dh(x,y)=1,\\ x\in G_{i+1}, y \in G_{i}}} \abss{\E\left[f_a(X_{t^*-1}') - f_a(X_{t^*}') \middle| X_{i}'=y\right]} \label{eq:mmd4}
	\end{align}
	\normalsize
	where in (\ref{eq:mmd1}) we relabeled the variables in the second expectation to avoid notational confusion in the later steps of our bounding, maintaining the understanding that the sequence $\{X_i',Y_i'\}_i$ has the same distribution as $\{X_i,Y_i\}_i$, in (\ref{eq:mmd2}) we added and subtracted the term $\E[f_a(X_{t^*-1}') | X_i'=y]$, (\ref{eq:mmd3}) holds for any valid coupling of the two chains, one starting at $X_{i+1}$ and the other starting at $X_i'$, and both running for $t^*-1-i$ steps. In particular, we use the greedy coupling between these two runs (Definition~\ref{def:greedy-coupling}). Consider (\ref{eq:mmd3}).
	\small
	\begin{align}
	&~~(\ref{eq:mmd3}) = \max_{\substack{x,y: \dh(x,y)=1,\\ x\in G_{i+1}, y \in G_{i}}} \left\lvert \E\left[\sum_{u_1,\ldots,u_{d}} \prod_{e=1}^{d}X_{t^*,u_e}\left(\sum_{u_{d+1}} a_{u_1 u_2 \ldots u_{d+1}} (X_{t^*,u_{d+1}}-X_{t^*-1,u_{d+1}}')\right) +\right. \right. \notag\\
	&~~~+ \left. \left. \sum_{u_1,\ldots,u_{d-1},u_{d+1}} \prod_{e=1}^{d-1} X_{t^*,u_e}X_{t^*-1,u_{d+1}}'\left(\sum_{u_{d}} a_{u_1 u_2 \ldots u_{d+1}}(X_{t^*,u_{d}}-X_{t^*-1,u_{d}}')\right) + \ldots \right. \right. \notag\\
	&~~~+ \left. \left. \sum_{u_2,\ldots,u_{d+1}} \prod_{e=2}^{d+1}X_{t^*-1,u_e}'\left(\sum_{u_1} a_{u_1u_2\ldots u_{d+1}}\left(X_{t^*,u_1} - X_{t^*-1,u_1}'\right) \right) \middle| X_{i+1} = x, X_{i}'=y\right] \right\rvert \notag\\
	&~~\le \max_{\substack{x,y: \dh(x,y)=1,\\ x\in G_{i+1}, y \in G_{i}}} \E\left[\sum_{u_{d+1}} \abss{X_{t^*,u_{d+1}}-X_{t^*-1,u_{d+1}}'}\abss{\sum_{u_1,\ldots,u_{d}} a_{u_1u_2\ldots u_{d+1}} \prod_{e=1}^{d}X_{t^*,u_e}} \right. \notag \\
	&~~~\left. + \sum_{u_{d}} \abss{X_{t^*,u_{d}}-X_{t^*-1,u_{d}}'}\abss{\sum_{u_1,\ldots,u_{d-1},u_{d+1}} a_{u_1 u_2 \ldots u_{d+1}} \prod_{e=1}^{d-1} X_{t^*,u_e}X_{t^*-1,u_{d+1}}'} + \ldots \right. \notag \\
	&~~~\left. + \sum_{u_1} \abss{X_{t^*,u_1} - X_{t^*-1,u_1}'}\abss{\sum_{u_2,u_3,\ldots,u_{d+1}} a_{u_1u_2\ldots u_{d+1}} \prod_{e=2}^{d+1}X_{t^*-1,u_e}' } \:\: \middle| X_{i+1} = x, X_{i}'=y\right] \label{eq:mmd6}
	\end{align}
	We see the \emph{hybrid} terms arise in (\ref{eq:mmd6}). A generic term in (\ref{eq:mmd6}) looks as follows:
	\small
	\begin{align}
	\E\left[\sum_{u_l} \abss{X_{t^*,u_l} - X_{t^*-1,u_l}'}\abss{\sum_{u_1,u_2,\ldots,u_{l-1},u_{l+1},\ldots, u_{d+1}} a_{u_1u_2\ldots u_{d+1}} \prod_{e=1}^{l-1} X_{t^*,u_e}\prod_{e=l+1}^{d+1} X_{t^*-1,u_{e}}'} \middle| X_{i+1}=x,X_{i}'=y  \right] \label{eq:mmd7}	
	\end{align}
	\normalsize
	
	Since $x \in G_{i+1}$ and $y \in G_i$, from Statements~\ref{stmt:hybrid-cond-expectation} and \ref{stmt:hybrid-cond-concentration} of Theorem \ref{thm:hybrid-supplement-induction}
	\small
	\begin{align}
	&\abss{\E\left[\sum_{u_1,u_2,\ldots,u_{l-1},u_{l+1},\ldots, u_{d+1}} a_{u_1u_2\ldots u_{d+1}} \prod_{e=1}^{l-1} X_{t^*,u_e}\prod_{e=l+1}^{d+1} X_{t^*-1,u_{e}}' \middle| X_{i+1}=x,X_{i}'=y  \right]} \le dK \text{ and}\label{eq:mmd8}\\
	&\Pr\left[ \left\lvert \sum_{u_1,u_2,\ldots,u_{l-1},u_{l+1},\ldots, u_{d+1}} a_{u_1u_2\ldots u_{d+1}} \prod_{e=1}^{l-1} X_{t^*,u_e}\prod_{e=l+1}^{d+1} X_{t^*-1,u_{e}}' - \right. \right. \notag \\
	&~\left. \left. \E\left[\sum_{u_1,u_2,\ldots,u_{l-1},u_{l+1},\ldots, u_{d+1}} a_{u_1u_2\ldots u_{d+1}} \prod_{e=1}^{l-1} X_{t^*,u_e}\prod_{e=l+1}^{d+1} X_{t^*-1,u_{e}}' \middle| X_{i+1},X_{i}'  \right]\right\rvert > K \middle| X_{i+1}=x,X_i' = y\right] \le 2\exp\left(-\frac{K^{2/d}}{c_{6}(d)t^*}\right), \label{eq:mmd9}
	\end{align}
	\normalsize
	where $c_6(d)$ is the constant function in Statement~\ref{stmt:hybrid-cond-concentration} of Theorem \ref{thm:hybrid-supplement-induction}.
	(\ref{eq:mmd8}) and (\ref{eq:mmd9}) together with the Hamming contraction property of the greedy coupling (Lemma~\ref{lem:greedy-properties}) imply, there exists a constant $c_5 > 0$, such that, for $K > c(d)(n\log^2 n/\eta)^{d/2}$,
	\begin{align}
	&(\ref{eq:mmd7}) \le c_5dK \implies (\ref{eq:mmd6}) \le c_5d^2K.
	\end{align}
	
	Now, we consider (\ref{eq:mmd4}) and bound it using the same approach as was used to bound (\ref{eq:mmd3}).
	\small
	\begin{align}
	&(\ref{eq:mmd4}) =  \max_{y \in G_{i}} \left\lvert \E\left[\sum_{u_1,\ldots,u_{d}} \prod_{e=1}^{d}X_{t^*,u_e}'\left(\sum_{u_{d+1}} a_{u_1 u_2 \ldots u_{d+1}} (X_{t^*,u_{d+1}}'-X_{t^*-1,u_{d+1}}')\right) +\right. \right. \label{eq:mmd10}\\
	&~~~+ \left. \left. \sum_{u_1,\ldots,u_{d-1},u_{d+1}} \prod_{e=1}^{d-1} X_{t^*,u_e}'X_{t^*-1,u_{d+1}}'\left(\sum_{u_{d}} a_{u_1 u_2 \ldots u_{d+1}}(X_{t^*,u_{d}}'-X_{t^*-1,u_{d}}')\right) + \ldots \right. \right. \notag\\
	&~~~+ \left. \left. \sum_{u_2,\ldots,u_{d+1}} \prod_{e=2}^{d+1}X_{t^*-1,u_e}'\left(\sum_{u_1} a_{u_1u_2\ldots u_{d+1}}\left(X_{t^*,u_1}' - X_{t^*-1,u_1}'\right) \right) \middle| X_{i}'=y\right] \right\rvert \notag\\
	&~~\le \max_{y \in G_{i}} \E\left[\sum_{u_{d+1}} \abss{X_{t^*,u_{d+1}}'-X_{t^*-1,u_{d+1}}'}\abss{\sum_{u_1,\ldots,u_{d}} a_{u_1u_2\ldots u_{d+1}} \prod_{e=1}^{d}X_{t^*,u_e}'} \right. \notag \\
	&~~~\left. + \sum_{u_{d}} \abss{X_{t^*,u_{d}}'-X_{t^*-1,u_{d}}'}\abss{\sum_{u_1,\ldots,u_{d-1},u_{d+1}} a_{u_1 u_2 \ldots u_{d+1}} \prod_{e=1}^{d-1} X_{t^*,u_e}'X_{t^*-1,u_{d+1}}'} \ldots \right. \notag \\
	&~~~\left. + \sum_{u_1} \abss{X_{t^*,u_1}' - X_{t^*-1,u_1}'}\abss{\sum_{u_2,u_3,\ldots,u_{d+1}} a_{u_1u_2\ldots u_{d+1}} \prod_{e=2}^{d+1}X_{t^*-1,u_e}' } \:\: \middle| X_{i}'=y\right] \label{eq:mmd11}
	\end{align}
	\normalsize
	where in (\ref{eq:mmd10}) we have used Statement 3 of Lemma~\ref{lem:greedy-properties}.
	A generic term in (\ref{eq:mmd11}) looks as follows:
	\small
	\begin{align}
	\E\left[\sum_{u_l} \abss{X_{t^*,u_l}' - X_{t^*-1,u_l}'}\abss{\sum_{u_1,u_2,\ldots,u_{l-1},u_{l+1},\ldots, u_{d+1}} a_{u_1u_2\ldots u_{d+1}} \prod_{e=1}^{l-1} X_{t^*,u_e}'\prod_{e=l+1}^{d+1} X_{t^*-1,u_{e}}'} \middle| X_{i}'=y  \right] \label{eq:mmd12}	
	\end{align}
	\normalsize
	Since $y \in G_i$, from the definition of $G_i$ and the fact that $\dh(X_{t^*}',X_{t^*-1}') \le 1$ we have that,
	\small
	\begin{align}
	&\abss{\E\left[\sum_{u_1,u_2,\ldots,u_{l-1},u_{l+1},\ldots, u_{d+1}} a_{u_1u_2\ldots u_{d+1}} \prod_{e=1}^{l-1} X_{t^*,u_e}'\prod_{e=l+1}^{d+1} X_{t^*-1,u_{e}}' \middle| X_{i}'=y  \right]} \le 2K \text{ and}\label{eq:mmd13}\\
	&\Pr\left[ \left\lvert \sum_{u_1,u_2,\ldots,u_{l-1},u_{l+1},\ldots, u_{d+1}} a_{u_1u_2\ldots u_{d+1}} \prod_{e=1}^{l-1} X_{t^*,u_e}'\prod_{e=l+1}^{d+1} X_{t^*-1,u_{e}}' - \right. \right. \notag \\
	&~\left. \left. \E\left[\sum_{u_1,u_2,\ldots,u_{l-1},u_{l+1},\ldots, u_{d+1}} a_{u_1u_2\ldots u_{d+1}} \prod_{e=1}^{l-1} X_{t^*,u_e}'\prod_{e=l+1}^{d+1} X_{t^*-1,u_{e}}' \middle| X_{i}'  \right]\right\rvert > 2K \middle| X_{i}' = y\right] \le 2\exp\left(-\frac{(2K)^{2/d}}{c_{1}(d)t^*}\right). \label{eq:mmd14}
	\end{align}
	\normalsize
	(\ref{eq:mmd13}) and (\ref{eq:mmd14}) together with the property that $\dh(X_{t^*}',X_{t^*-1}') \le 1$ imply that for $c_5$ sufficiently large and $K > c(d)(n\log^2 n/\eta)^{d/2}$,
	\begin{align}
	&(\ref{eq:mmd12}) \le c_5dK \implies (\ref{eq:mmd11}) \le c_5d^2K.
	\end{align}
	Hence we get that, when $X_i \in G_i$ and $X_{i+1} \in G_{i+1}$,
	\begin{align}
	\abss{B_{i+1}-B_i} \le 2c_5d^2K.
	\end{align}
	
	\end{proof}

	As a consequence of Lemma \ref{lem:multilinear-martingale-increment-bound}, we get the following two useful corollaries.
	\begin{corollary}
		\label{cor:multilinear-martingale-increment-prob-bound}
		Consider the martingale sequence defined in Definition~\ref{def:multilinear-doob-martingale}. 
		$$\Pr\left[\forall \: 0 < i+1 < T_K, \: \abss{B_{i+1}-B_i} \le 2c_5d^2K \right] = 1.$$
	\end{corollary}
	\begin{proof}
		Let $\kappa = 2c_5d^2K$.
		\begin{align}
		&~\Pr\left[\forall \: 0 < i+1 < T_K, \: \abss{B_{i+1}-B_i} \le \kappa \right] \notag\\
		&=1 - \Pr\left[\exists \: 0 < i+1 < T_K, \: \abss{B_{i+1}-B_i} > \kappa \right] \notag\\
		&=1 - \Pr\left[\exists \: 0 < i+1 < T_K, \: \left(X_i \in G_K^{a,d+1}(t^*,i), X_{i+1}\in G_K^{a,d+1}(t^*,i+1) \text{ and } \abss{B_{i+1}-B_i} > \kappa\right) \right. \notag\\
		&~~~\left. \text{ or }  \left(\left(X_i \notin G_K^{a,d+1}(t^*,i) \text{ or } X_{i+1}\notin G_K^{a,d+1}(t^*,i+1)\right) \text{ and } \abss{B_{i+1}-B_i} > \kappa\right)\right] \notag\\
		&= 1- \Pr\left[\exists \: 0 < i+1 < T_K, \: \left(X_i \in G_K^a(i), X_{i+1}\in G_K^a(i+1) \text{ and } \abss{B_{i+1}-B_i} > \kappa\right) \right] \label{eq:mpr-one3}\\
		& = 1-0 \label{eq:mpr-one4}
		\end{align}
		where (\ref{eq:mpr-one3}) follows because by the definition of $T_K$, $\Pr\left[\exists \: 0 < i+1 < T_K, \: \left(X_i \notin G_K^a(i) \text{ or } X_{i+1}\notin G_K^a(i+1)\right)\right] = 0$, and (\ref{eq:mpr-one4}) follows because $X_i \in G_K^a(i), X_{i+1}\in G_K^a(i+1) \implies \abss{B_{i+1}-B_i} \le \kappa$ (Lemma \ref{lem:multilinear-martingale-increment-bound}).
	\end{proof}
	Corollary~\ref{cor:multilinear-martingale-increment-prob-bound} will give us one of the required conditions to apply Freedman's inequality.

	As a corollary of Lemma \ref{lem:multilinear-martingale-increment-bound}, we get a bound on the variance of the martingale differences which holds with high probability and to show it we first show Claim~\ref{clm:multilinear-nice-sets} which states that, informally, for any time step $i$, with a large probability we hit an $X_i$ such that the probability of transitioning from $X_i$ to an $X_{i+1} \in G_K^a(i+1)$ is large.
	\begin{claim}
		\label{clm:multilinear-nice-sets}
		Denote by $N_K^{a,d+1}(t^*,i)$ the following set of configurations:
		\begin{align}
		N_K^{a,d+1}(t^*,i) = \left\{ x_i \in \Omega \middle| \Pr\left[X_{i+1} \notin G_K^{a,d+1}(t^*,i+1) \middle| X_i = x_i \right] \le \exp\left( -\frac{K^{2/d}}{4c_2(d)t^*} \right) \right\}.
		\end{align}
		Then,
		$$\Pr\left[X_i \notin N_K^{a,d+1}(t^*,i)\right] \le 8dn^{d+1}\exp\left(-\frac{K^{2/d}}{4c_2(d)t^*}\right).$$
	\end{claim}
	\begin{proof}
		We have from Lemma \ref{lem:multilinear-goodset-largeprob}, that 
		\begin{align}
		&\Pr\left[X_{i} \notin G_K^{a,d+1}(t^*,i) \right] \le 8dn^{d+1}\exp\left( -\frac{K^{2/d}}{2c_2(d)t^*} \right) \text{ and} \label{eq:mnice1}\\
		&\Pr\left[X_{i+1} \notin G_K^{a,d+1}(t^*,i+1) \right] \le 8dn^{d+1}\exp\left( -\frac{K^{2/d}}{2c_2(d)t^*} \right). \label{eq:mnice2}
		\end{align}
		From the definition of the set $N_K^{a,d+1}(t^*,i)$ we have,
		\begin{align}
		\Pr\left[X_{i+1} \in G_K^{a,d+1}(t^*,i) | X_i \notin N_K^{a,d+1}(t^*,i)\right] \le 1-\exp\left( -\frac{K^{2/d}}{4c_2(d)t^*} \right).
		\end{align}
		Then we have,
		\begin{align}
		&1-8dn^{d+1}\exp\left( -\frac{K^{2/d}}{2c_2(d)t^*} \right) \le \Pr\left[X_{i+1} \in G_K^{a,d+1}(t^*,i)\right]\\
		&= \Pr\left[X_{i+1} \in G_K^{a,d+1}(t^*,i) | X_i \in N_K^{a,d+1}(t^*,i)\right] \Pr\left[X_i \in N_K^{a,d+1}(t^*,i)\right] \notag \\
		&~~+ \Pr\left[X_{i+1} \in G_K^{a,d+1}(t^*,i) | X_i \notin N_K^{a,d+1}(t^*,i)\right]\Pr\left[X_i \notin N_K^{a,d+1}(t^*,i)\right] \\
		&\le \Pr\left[X_i \in N_K^{a,d+1}(t^*,i)\right] + \left(1-\exp\left( -\frac{K^{2/d}}{4c_2(d)t^*} \right)\right)\Pr\left[X_i \notin N_K^{a,d+1}(t^*,i)\right] \\
		&=   \left(1-\exp\left( -\frac{K^{2/d}}{4c_2(d)t^*} \right)\right) + \exp\left( -\frac{K^{2/d}}{4c_2(d)t^*} \right)\Pr\left[X_i \in N_K^{a,d+1}(t^*,i)\right]. \label{eq:mnice3}
		\end{align}
		(\ref{eq:mnice3}) implies,
		\begin{align}
		&\Pr\left[X_i \in N_K^{a,d+1}(t^*,i)\right] \ge  \frac{\exp\left( -\frac{K^{2/d}}{4c_2(d)t^*} \right) - 8dn^{d+1}\exp\left( -\frac{K^{2/d}}{2c_2(d)t^*} \right)}{\exp\left( -\frac{K^{2/d}}{4c_2(d)t^*} \right)}\\
		&=1-8dn^{d+1}\exp\left(-\frac{K^{2/d}}{4c_2(d)t^*}\right).
		\end{align}
	\end{proof}

	As a corollary of Lemma \ref{lem:multilinear-martingale-increment-bound}, we get a bound on the variance of the martingale differences which holds with high probability.
	\begin{lemma}
		\label{lem:multilinear-martingale-increment-variance}
		Consider the martingale sequence defined in Definition~\ref{def:multilinear-doob-martingale}. Denote by $N_K^{a,d+1}(t^*,i)$ the following set of configurations:
		\begin{align}
		N_K^{a,d+1}(t^*,i) = \left\{ x_i \in \Omega \middle| \Pr\left[X_{i+1} \notin G_K^{a,d+1}(t^*,i+1) \middle| X_i = x_i \right] \le \exp\left( -\frac{K^{2/d}}{4c_2(d)t^*} \right) \right\}.
		\end{align}
		Let $b = (c_5d^2K/2)^2 + n^{2d +2}\exp\left( -\frac{K^{2/d}}{4c_2(d)t^*} \right)$ where $c_5$ is the constant from Lemma~\ref{lem:multilinear-martingale-increment-bound}. Then,
		\begin{align*}
		\Pr\left[\Var[B_{i+1} - B_i | \mathcal{F}_i] > b \middle| X_i \in G_K^{a,d+1}(t^*,i) \cap N_K^{a,d+1}(t^*,i) \right] = 0.
		\end{align*}
		where $\mathcal{F}_i = 2^{O_i}$.
	\end{lemma}
	\begin{proof}
		Since, the random variables $X_0,\ldots,X_i$ together characterize every event in $\mathcal{F}_i$, we have,
		\begin{align}
		\Var[B_{i+1}-B_i | \mathcal{F}_i] = \Var[B_{i+1}-B_i | X_0,X_1,\ldots,X_i] = \Var[B_{i+1}-B_i | X_i] \label{eq:mult-desigmafy} 
		\end{align}
		where the last equality follows from the Markov property of the Glauber dynamics.
		By the definition of $N_K^{a,d+1}(t^*,i)$, we have that
		\begin{align}
		\Pr\left[X_{i+1} \notin G_K^a(i+1) \middle| X_i \in N_K^a(i)\right] \le \exp\left( -\frac{K^{2/d}}{4c_2(d)t^*} \right).
		\end{align}
		This implies that,
		\begin{align}
		&\Pr\left[X_{i+1} \in G_K^a(i+1) \text{ and } X_i \in G_K^a(i) \middle| X_i \in G_K^a(i) \text{ and } X_i \in N_K^a(i)\right] \ge 1 - \exp\left( -\frac{K^{2/d}}{4c_2(d)t^*} \right)\\
		\implies &\Pr\left[\abss{B_{i+1}-B_i} < c_5d^2K \middle| X_i \in G_K^{a,d+1}(t^*,i) \cap N_K^{a,d+1}(t^*,i)\right] \ge 1 - \exp\left( -\frac{K^{2/d}}{4c_2(d)t^*} \right) \label{eq:mvar6}\\
		\implies &\Var\left[B_{i+1}-B_i \middle| X_i \in G_K^{a,d+1}(t^*,i) \cap N_K^{a,d+1}(t^*,i)\right] \le (c_5d^2K/2)^2 + n^{2d+2}\exp\left( -\frac{K^{2/d}}{4c_2(d)t^*} \right) \label{eq:mvar7}
		\end{align}
		where (\ref{eq:mvar6}) follows from Lemma~\ref{lem:multilinear-martingale-increment-bound}, and (\ref{eq:mvar7}) follows from the law of total variance and from the fact that $\Var(X) \le (b-a)^2/4$ when $X \in [a,b]$ with probability 1. The last inequality implies the statement of the lemma.
		
	\end{proof}

	With Lemma~\ref{lem:multilinear-martingale-increment-bound} and Lemma~\ref{lem:multilinear-martingale-increment-variance} to bound the martingale increments, and Lemma~\ref{lem:multilinear-stopping-time-large} to show that the stopping time is large, we are ready to apply Freedman's inequality on the martingale defined in Definition~\ref{def:multilinear-doob-martingale} to yield Lemma~\ref{lem:multilinear-apply-freedman}.
	\begin{lemma}
		\label{lem:multilinear-apply-freedman}
		For any $0 \le t_0 \le t^*$, there exists $c(d)>0$ which is a function of $d$ alone, such that for any $\rad > c(d+1)(n\log^2 n/\eta)^{(d+1)/2}$, 
		$$\Pr\left[ \abss{f_a(X_{t^*}) - \E\left[f_a(X_{t^*})|X_{t_0}\right]} \ge \rad  \right] \le 4\exp\left(-\frac{\rad^{2/(d+1)}}{c_2(d+1)t^*} \right).$$
	\end{lemma}
	\begin{proof}
		From Freedman's inequality (Lemma \ref{lem:freedman}) applied on the martingale sequence (Definition~\ref{def:multilinear-doob-martingale}) starting from $t_0$, we get
		\begin{align}
		\Pr\left[\exists t < T_K \text{ s.t. } \abss{B_t-B_{t_0}}\ge \rad \text{ and } V_{t} \le B \right] \le 2\exp\left(-\frac{\rad^2}{2(\rad K_1 + B)} \right) \label{eq:mfr1}
		\end{align}
		where $K_1 \le c_5d^2K$ (Lemma~\ref{lem:multilinear-martingale-increment-bound}) and $V_t$ is defined as follows:
		\begin{align}
		V_t = \sum_{i=0}^{t-1} \Var\left[ B_{i+1} - B_{i} | \mathcal{F}_i \right].\label{eq:multilinear-vt-def}
		\end{align}
		Set $B = t^*(c_5d^2K/2)^2 + t^*n^{2d+2}\exp\left( -\frac{K^{2/d}}{4c_2(d)t^*} \right)$ and $K = \rad^{d/(d+1)}$. Then we have, $\rad K_1 = c_5 d^2 K^{(2d+1)/d} \le c_5 d^2 K^2 n$ where the last inequality holds because $\rad \le n^{d+1}$ which in turn implies $K \le n^d$. 
		Similarly, since $\rad \ge c(d+1)(n\log^2 n/\eta)^{(d+1)/2}$, we have $K \ge c(d+1)^{d/(d+1)}(n\log^2 n/\eta)^{d/2}$.
		Combined with the fact that $t^* \le (d+1)n\log n/\eta$, this implies that $t^*n^{2d+2}\exp\left( -\frac{K^{2/d}}{4c_2(d)t^*} \right) \leq t^*$ for a sufficiently large value of $c(d+1)$.
		This in turn implies that $B \le t^* c_5^2d^4K^2 /2$.
		
		Hence (\ref{eq:mfr1}) becomes,
		\begin{align}
		&\Pr\left[\exists t < T_K \text{ s.t. } \abss{B_t - B_0}\ge \rad \text{ and } V_{t} \le B \right] \le 2\exp\left(-\frac{\rad^2}{2( c_5 d^2 K^2 n + t^* c_5^2d^4K^2 /2)} \right) \notag\\
		&\le 2\exp\left(-\frac{\rad^2}{3c_5^2d^4K^2t^*} \right). \label{eq:mfr2}
		\end{align}

		Next we will bound, $\Pr\left[V_{t^*} > B\right]$ which will be useful for obtaining the desired concentration bound from (\ref{eq:fr2}). 
		\begin{align}
		&\Pr\left[V_{t^*} > B\right] \le \Pr\left[V_{t^*} > B \middle| \forall \: 0 \le t \le t^* \: X_t \in G_K^{a,d+1}(t^*,t) \cap N_K^{a,d+1}(t^*,t) \right] \notag \\
		&~~+ \Pr\left[\exists \: 0 \le t \le t^* \: X_t \notin G_K^{a,d+1}(t^*,t) \cup N_K^{a,d+1}(t^*,t) \right] \notag\\
		&\le \Pr\left[\exists \: 0 \le t < t^* \: \text{ s.t. } \Var\left[ B_{t+1}-B_t | X_t \right] > B/t^* \middle| \forall \: 0 \le t \le t^* \: X_t \in G_K^{a,d+1}(t^*,t) \cap N_K^{a,d+1}(t^*,t) \right] \label{eq:mvar-bound2}\\
		&~~+ \sum_{t=0}^{t^*}\left(\Pr\left[X_t \notin G_K^{a,d+1}(t^*,t) \right] + \Pr\left[X_t \notin N_K^{a,d+1}(t^*,t) \right]\right) \label{eq:mvar-bound3}\\
		&\le 0 + (t^*+1)\left(8dn^{d+1}\exp\left(-\frac{K^{2/d}}{2c_1(d)t^*}\right) + 8dn^{d+1}\exp\left(-\frac{K^{2/d}}{4c_2(d)t^*}\right)\right) \le \frac{16(d+2)^2n^{d+2}\log n}{\eta}\exp\left(-\frac{K^{2/d}}{4c_2(d)t^*}\right). \label{eq:mvar-bound4}
		\end{align}
		where (\ref{eq:mvar-bound2}) holds because $V_{t^*} > B$ implies that there exists a $0 \le t \le t^*$ such that $\Var\left[ B_{t+1}-B_t | X_t \right] > B/t^*$, (\ref{eq:mvar-bound3}) follows by an application of the union bound, and (\ref{eq:mvar-bound4}) follows from Lemma~\ref{lem:multilinear-martingale-increment-variance}, Lemma~\ref{lem:multilinear-goodset-largeprob} and Claim~\ref{clm:multilinear-nice-sets}.

		Now,
		\begin{align}
		&\Pr\left[\abss{f(X_{t^*}) - \E\left[f(X_{t^*})|X_{t_0}\right]}\ge \rad  \right] = \Pr\left[\abss{B_{t^*}-B_0} \ge \rad \right] \notag\\
		&~~\le \Pr\left[\abss{B_{t^*}-B_0}\ge \rad \text{ and } V_{t^*} \le B \right] + \Pr\left[V_{t^*} > B\right] \label{eq:mfr7}\\
		&~~\le \Pr\left[\abss{B_{t^*}-B_0}\ge \rad \text{ and } V_{t^*} \le B \text{ and } t^* < T_K \right] + \Pr[t^* \ge T_K] + \Pr\left[V_{t^*} > B\right] \label{eq:mfr3}\\
		&~~\le \Pr\left[\left(\exists t \le t^* \text{ s.t. } \abss{B_t - B_0}\ge \rad \text{ and } V_{t} \le B\right) \text{ and } t^* < T_K \right] + \Pr[t^* \ge T_K] + \Pr\left[V_{t^*} > B\right] \label{eq:mfr4}\\
		&~~\le \Pr\left[\exists t < T_K \text{ s.t. } \abss{B_t - B_0}\ge \rad \text{ and } V_{t} \le B \right] + \Pr[t^* \ge T_K] + \Pr\left[V_{t^*} > B\right] \notag\\
		&~~\le 2\exp\left(-\frac{\rad^2}{3c_5^2d^4K^2t^*} \right) + 8dt^* n^{d+1}\exp\left(-\frac{\rad^{2/(d+1)}}{2c_2(d)t^*}\right) + \Pr\left[V_{t^*} > B\right]  \label{eq:mfr5}\\
		&~~\le 3\exp\left(-\frac{\rad^2}{3c_5^2d^4K^2t^*} \right) + \Pr\left[V_{t^*} > B\right] \label{eq:mfr6}\\
		&~~\le 3\exp\left(-\frac{\rad^2}{3c_5^2d^4K^2t^*} \right) + \frac{16(d+2)^2n^{d+2}\log n}{\eta}\exp\left(-\frac{K^{2/d}}{4c_2(d)t^*}\right) \label{eq:mfr8}\\
		&~~\le 2\exp\left(-\frac{\rad^{2/(d+1)}}{c_2(d+1)t^*} \right) \label{eq:mfr9}
		\end{align}
		where (\ref{eq:mfr7}) and (\ref{eq:mfr3}) follow from the fact that $\Pr[A] \le \Pr[A \cap B] + \Pr[\neg B]$, (\ref{eq:mfr4}) follows from the fact that $\Pr[A] \le \Pr[A \cup B]$, (\ref{eq:mfr5}) follows from (\ref{eq:mfr2}) and from Lemma \ref{lem:multilinear-stopping-time-large}, (\ref{eq:mfr6}) holds for a sufficiently large $c(d+1)$ because $\rad > c(d+1)(n\log^2 n/\eta)^{(d+1)/2}$, (\ref{eq:mfr8}) follows from (\ref{eq:mvar-bound4}) and (\ref{eq:mfr9}) again holds for a sufficiently large $c(d+1)$, $c_2(d+1)$, because $\rad > c(d+1)(n\log^2 n/\eta)^{(d+1)/2}$ and $K = \rad^{d/(d+1)}$.
		Note that we have implicitly assumed that $\eta > 1/n$, since otherwise the concentration bounds obtained are trivial.
	\end{proof}

	\noindent \textbf{Statement~\ref{stmt:main-conc}:} This statement will follow from Statement~\ref{stmt:freedman-conc} applied to the case $t_0=0$ together with an application of the mixing time properties of the Glauber dynamics. Set $t^* = (d+2)\tmix$. From Lemma \ref{lem:close-to-stationary}, we have that 
	\begin{align}
	&\dtv(X_{t^*}|X_0, p) \le \exp\left(-(d+1)n\log n \right) \notag\\
	\implies &\abss{\E\left[f_a(X_{t^*})|X_0\right] - \E\left[f_a(X_{t^*})\right]} \le 2n^{d+1}\exp(- (d+1)n \log n) \le 2\exp(-n). \label{eq:mtb1}
	\end{align}
	Hence,
	\begin{align}
	&\Pr\left[ \abss{f_a(X_{t^*}) - \E\left[f_a(X_{t^*})\right]} \ge \rad \right] \le \Pr\left[ \abss{f_a(X_{t^*}) - 	\E\left[f_a(X_{t^*}) | X_0 \right]} > \rad-2\exp(-n) \right] \label{eq:mtb3}\\
	&\le 4\exp\left(-\frac{(\rad-2)^{2/(d+1)}}{c_2(d+1)t^*} \right) \label{eq:mtb4}\\
	&\le 2\exp\left(-\frac{\eta\rad^{2/(d+1)}}{c_3(d+1)n\log n}\right) \label{eq:mtb5}\notag
	\end{align}
	(\ref{eq:mtb3}) follows from (\ref{eq:mtb1}), (\ref{eq:mtb4}) follows from Lemma~\ref{lem:multilinear-apply-freedman} and (\ref{eq:mtb5}) holds for a sufficiently large constant $c_3(d)$. \\

	\noindent \textbf{Statement~\ref{stmt:conc-of-cond-expectation}:} This follows from Corollary~\ref{cor:marginal-bound}, and Statements~\ref{stmt:main-conc}, \ref{stmt:freedman-conc} of the theorem. Indeed, since $X_{t^*} \sim p$, we have for any $\rad > c(d+1)(n\log^2 n/\eta)^{(d+1)/2}$,
	\begin{align}
	&\Pr\left[\abss{\E[f_a(X_{t^*}) | X_{t_0}] - \E[f_a(X_{t^*})]} \ge \rad  \right] \\
	&\le \Pr\left[\abss{f_a(X_{t^*}) - \E[f_a(X_{t^*})]} \ge \rad/2  \right] + \left[ \abss{f_a(X_{t^*}) - \E[f_a(X_{t^*}) | X_{t_0}] } \ge \rad/2 \right] \label{eq:mcce1}\\
	&\le 2\exp\left(-\frac{\eta\rad^{2/(d+1)}}{4^{1/(d+1)}c_3(d)n\log n}\right) + 2\exp\left(-\frac{\rad^{2/(d+1)}}{4^{1/(d+1)}c_2(d+1)t^*} \right) \label{eq:mcce2}\\
	&\le 2\exp\left( - \frac{\rad^{2/(d+1)}}{c_7(d+1)t^*}\right). \label{eq:mcce3}
	\end{align}
	where (\ref{eq:mcce1}) follows because $\abss{\E[f_a(X_{t^*}) | X_{t_0}] - \E[f_a(X_{t^*})]} \ge \rad \implies \abss{f_a(X_{t^*}) - \E[f_a(X_{t^*}) | X_{t_0}] } \ge \rad/2$ or $\abss{f_a(X_{t^*}) - \E[f_a(X_{t^*})]} \ge \rad/2$, (\ref{eq:mcce2}) follows from Statements~\ref{stmt:main-conc} and \ref{stmt:freedman-conc} respectively and (\ref{eq:mcce3}) holds for a sufficiently large constant $c_7(d+1)$.
	Since $X_{t^*} \sim p$, from Corollary~\ref{cor:marginal-bound}, and from the fact that $\rad > c(d+1)(n\log^2 n/\eta)^{(d+1)/2}$ we get that $\abss{\E[f_a(X_{t^*})]} \le 2(4n(d+1)\log n / \eta)^{(d+1)/2} \le \rad/2$ for $c(d+1)$ sufficiently large. This implies in turn that,
	\begin{align}
	&\Pr\left[\abss{\E[f_a(X_{t^*}) | X_{t_0}]} \ge \rad  \right] \le \Pr\left[\abss{\E[f_a(X_{t^*}) | X_{t_0}]- \E[f_a(X_{t^*})]} \ge \rad/2  \right]\\
	&~~~\le 2\exp\left( - \frac{\rad^{2/(d+1)}}{4^{1/(d+1)}c_7(d+1)t^*}\right) \le 2\exp\left( - \frac{\rad^{2/(d+1)}}{c_4(d+1)t^*}\right),
	\end{align}
	for a sufficiently large constant $c_4(d+1)$.
	This shows the theorem holds by induction.
\end{proof}
Note that a straightforward corollary of Theorem~\ref{thm:multilinear-induction} is the desired statement for concentration of $d$-linear functions.

\subsection{Supplementary Theorem Statement for \emph{Hybrid} Functions}
\label{sec:hybrid}
\begin{theorem}
	\label{thm:hybrid-supplement-induction}
	Let $p$ be an Ising model in the $\eta$-high temperature regime.
	Let $\tmix = n\log n / \eta$ denote the mixing time of the Glauber dynamics associated with $p$.
	Let $f_a(x) = \sum_{u_1,u_2,\ldots ,u_d} a_{u_1 u_2 \ldots u_d} x_{u_1}x_{u_2}\ldots x_{u_d}$ be a $d$-linear function. Let $G_K^{a,d}(t_1,t)$ be the `good' set associated with $f_a(.)$ as defined in (\ref{eq:multilinear-Gt}). Additionally, define $G_K^{a,0}(t_1,t) = \{\pm 1\}^n$. Also let $2\tmix \le t^* \le (n+1)\tmix$, $0 \le t_0 \le t^*$ and let $x_{t_0}^{(1)}$ be a starting state such that $x_{t_0}^{(1)} \in G_K^{a,d}(t^*,t_0)$. Let $x_{t_0}^{(2)}$ be a state obtained by taking a step of the Glauber dynamics starting from $x_{t_0}^{(1)}$. Suppose we also have that $x_{t_0}^{(2)} \in G_{K}^{a,d}(t^*,t_0)$.
	Consider the 2 runs of the Glauber dynamics associated with $p$ with the $j^{th}$ run starting at $X_{t_0}^{(j)} = x_{t_0}^{(j)}$ respectively, and coupled together using the greedy coupling (Definition \ref{def:greedy-coupling}). Denote the state of run $j$ at time $t \ge t_0$ by $X_t^{(j)}$. Consider any $l$-linear function from $F_a^d(l)$: $f_a^{v_1,v_2,\ldots,v_{d-l}}$. Denote its coefficient vector by $\a$. That is,
	$$f_a^{v_1,v_2,\ldots,v_{d-l}}(x) = \sum_{u_1,u_2,\ldots ,u_l} \a_{u_1 u_2 \ldots u_l} x_{u_1}x_{u_2}\ldots x_{u_l}.$$
	Note that $\a_{u_1 u_2 \ldots u_l} = a_{u_1,u_2,\ldots,u_l,v_1,v_2,\ldots,v_{d-l}}$. 
	For each $f_a^{v_1,v_2,\ldots,v_{d-l}}$ we define an associated class of hybrid functions defined over the concatenated states from the two runs of Glauber dynamics described above as follows:
	\begin{align}
	&f_a^{v_1,v_2,\ldots,v_{d-l}}\left( x^{(1:2)} \right) = \sum_{u_1,u_2,\ldots,u_l} \a_{u_1 u_2 \ldots u_l} x_{u_1}^{(1)} x_{u_2}^{(1)} \ldots x_{u_{l_1}}^{(1)}x_{u_{l_1+1}}^{(2)} x_{u_l}^{(2)} \\
	&=\sum_{u_1,u_2,\ldots,u_l} \a_{u_1 u_2 \ldots u_l} \prod_{b=1}^{2} \prod_{e=1}^{l_b}x_{u_{(b-1)l_1+e}}^{(b)}
	\end{align}
	where $l_2 = l-l_1$.
	Then, the following two statements hold for all $0 \le l \le d-1$, and for any $f \in F_a^d(l)$, there exist constants $c(d),c_6(l)$ such that:
	\begin{enumerate}
	\item \small For any $K > c(d)(n\log^2 n/\eta)^{(d-1)/2}$
	$$\abss{\E\left[ f\left(X_{t^*}^{(1:2)}\right) \middle| X_{t_0}^{(j)}=x_{t_0}^{(j)} \: \text{for } j=1,2 \right]} \le (l+1)K^{l/(d-1)}.$$ \normalsize \label{stmt:hybrid-cond-expectation}
		
	\item For any $K > c(d)(n\log^2 n/\eta)^{(d-1)/2}$,
	\small
	$$\Pr\left[\abss{f\left(X_{t^*}^{(1:2)}\right) - \E\left[ f\left(X_{t^*}^{(1:2)}\right) \middle| X_{t_0}^{(j)} \: \text{for } j=1,2 \right]} > K^{l/(d-1)} \middle| X_{t_0}^{(j)}=x_{t_0}^{(j)} \: \text{for } j=1,2 \right] \le 2\exp\left(-\frac{K^{2/(d-1)}}{c_6(l)t^*}\right).$$
	\normalsize
	 \label{stmt:hybrid-cond-concentration}
	
	\end{enumerate}
\end{theorem}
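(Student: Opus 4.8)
The plan is to prove both statements simultaneously by induction on the degree $l$ of the hybrid function, from $l=0$ (where $f$ is a constant of absolute value $\le 1$ and, since $G_K^{a,0}(t_1,t)=\{\pm1\}^n$, both statements are trivial) up to $l=d-1$, using at each stage the already-established statements for all smaller degrees together with the concentration bounds for honest (non-hybrid) multilinear functions supplied by Theorem~\ref{thm:multilinear-induction}. The conceptual starting point is that a hybrid function is \emph{close} to an honest one: the two coupled runs start at $\dh(x_{t_0}^{(1)},x_{t_0}^{(2)})\le 1$, and by the contraction property of the greedy coupling (Lemma~\ref{lem:greedy-properties}) we have $\E[\dh(X_{t^*}^{(1)},X_{t^*}^{(2)})\mid X_{t_0}]\le (1-\eta/n)^{t^*-t_0}\le 1$, so the two runs agree on all but $O(1)$ coordinates in expectation. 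In the two pure cases $l_1=l$ and $l_2=l$, the hybrid function is literally $f(X_{t^*}^{(1)})$ or $f(X_{t^*}^{(2)})$, an honest member of $F_a^d(l)$ evaluated on a single run started from a good state, and both statements follow directly from the definition of $G_K^{a,d}(t^*,t_0)$ in~(\ref{eq:multilinear-Gt}).

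For Statement~\ref{stmt:hybrid-cond-expectation} I would reduce the number of run-$2$ factors $l_2$ to zero one factor at a time. Writing $X_{t^*,w}^{(2)} = X_{t^*,w}^{(1)} + (X_{t^*,w}^{(2)}-X_{t^*,w}^{(1)})$ for one isolated run-$2$ factor splits $f(X_{t^*}^{(1:2)})=g+h$, where $g$ is the same hybrid function with that factor moved to run $1$ (hence $l_2-1$ run-$2$ factors, handled by the inner induction on $l_2$) and $h=\sum_w (X_{t^*,w}^{(2)}-X_{t^*,w}^{(1)})\,f^w(X_{t^*}^{(1:2)})$, with $f^w$ the degree-$(l-1)$ hybrid function obtained by contracting the coefficient tensor on index $w$ (an element of $F_a^d(l-1)$). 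Since $\abss{X_{t^*,w}^{(2)}-X_{t^*,w}^{(1)}}$ is nonzero only on the $O(1)$-sized disagreement set, $\abss{\E[h\mid X_{t_0}]}\le 2\,\E[\sum_{w}\ind_{\{X_{t^*,w}^{(1)}\ne X_{t^*,w}^{(2)}\}}\abss{f^w}\mid X_{t_0}]$; bounding $\abss{f^w}$ by its conditional mean (Statement~\ref{stmt:hybrid-cond-expectation} at degree $l-1$) plus a deviation exceeding $K^{(l-1)/(d-1)}$ only with probability $2\exp(-\Omega(K^{2/(d-1)}/t^*))$ (Statement~\ref{stmt:hybrid-cond-concentration} at degree $l-1$), and using $\E[\dh(X_{t^*}^{(1)},X_{t^*}^{(2)})\mid X_{t_0}]\le 1$, yields $\abss{\E[h\mid X_{t_0}]}\le c\,l\,K^{(l-1)/(d-1)}$ plus a negligible tail contribution. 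Because $K\ge c(d)(n\log^2 n/\eta)^{(d-1)/2}$ is large, $K^{(l-1)/(d-1)}\le K^{l/(d-1)}$ with room to spare, so each of the at most $l$ swaps contributes at most $K^{l/(d-1)}$; adding the pure base contribution of $K^{l/(d-1)}$ telescopes to the claimed bound $(l+1)K^{l/(d-1)}$.

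For Statement~\ref{stmt:hybrid-cond-concentration} I would run the martingale machinery of Section~\ref{sec:bilinear} and Section~\ref{sec:multilinear}, but on the \emph{coupled} pair of runs. Define the Doob martingale $M_i=\E[f(X_{t^*}^{(1:2)})\mid X_i^{(1)},X_i^{(2)}]$ for $t_0\le i\le t^*$ with respect to the greedily coupled dynamics. Expanding a single increment exactly as in Lemma~\ref{lem:multilinear-martingale-increment-bound} (introducing an auxiliary $4$-greedy-coupled run via Lemma~\ref{lem:generalized-greedy-properties}, adding and subtracting a one-step-lagged term, and telescoping the product) reduces $\abss{M_{i+1}-M_i}$ to conditional expectations of degree-$(l-1)$ hybrid functions weighted by Hamming differences, which are $O(K^{(l-1)/(d-1)})$ whenever the intermediate states lie in the appropriate good sets. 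One then introduces a stopping time analogous to Definition~\ref{def:multilinear-stopping-time}, shows it exceeds $t^*$ with high probability (via Statements~\ref{stmt:hybrid-cond-expectation},~\ref{stmt:hybrid-cond-concentration} at degree $l-1$ and the marginal bounds of Corollary~\ref{cor:marginal-bound}), and applies Freedman's inequality (Lemma~\ref{lem:freedman}). With increment scale $K^{(l-1)/(d-1)}$ and deviation scale $K^{l/(d-1)}$, the exponent emerges as $K^{2l/(d-1)}/(t^*K^{2(l-1)/(d-1)})=K^{2/(d-1)}/t^*$, matching the statement. The main obstacle is precisely this step: the martingale increments are bounded only on a high-probability good event rather than pointwise, so a pure Azuma bound (Lemma~\ref{lem:azuma}) is unavailable and one must carry the full stopping-time-plus-Freedman argument (with the accompanying variance bound and the proof that the stopping time is large) through in the coupled setting, the same heavy bookkeeping as the main proof but now with two simultaneous runs and the extra hybrid terms. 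A secondary subtlety is keeping the constants $c_6(l)$ and the good-set thresholds $c_1(l)$ consistent across the nested inductions so that the exponents line up exactly.
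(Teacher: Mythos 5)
Your treatment of Statement~\ref{stmt:hybrid-cond-expectation} matches the paper's proof essentially step for step: an outer induction on $l$ with an inner induction on $l_2$, the swap decomposition $X^{(2)}_{t^*,w}=X^{(1)}_{t^*,w}+(X^{(2)}_{t^*,w}-X^{(1)}_{t^*,w})$ producing a less-hybrid function plus a correction weighted by coordinate disagreements, and the correction bounded via the lower-degree inductive hypothesis (expectation bound plus tail bound plus the trivial $O(n^{l+1})$ worst case) together with $\E[\dh(X_{t^*}^{(1)},X_{t^*}^{(2)})\mid X_{t_0}]\le 1$. That part is correct and is the paper's argument.

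For Statement~\ref{stmt:hybrid-cond-concentration}, however, you propose a genuinely different and substantially harder route than the paper, and it has two concrete gaps. The paper never sets up a martingale for the hybrid function at all: it reuses the same swap decomposition, observes (via Statement~\ref{stmt:hybrid-cond-expectation}) that the correction term's conditional mean is negligible, and then bounds the probability that the correction term is large by two events -- the Hamming distance $\dh(X_{t^*}^{(1)},X_{t^*}^{(2)})$ exceeding $K^{1/(d-1)}/4$, controlled by Lemma~\ref{lem:hamming-concentration} (a self-contained Azuma argument), and some contracted sub-function $f^{w}$ exceeding $K^{l/(d-1)}/2$, controlled by a union bound over the $n$ choices of $w$ and the inductive hypothesis. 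Your Freedman-with-stopping-time plan on the coupled pair breaks down at exactly the two places you would need it to work. First, the ``stopping time exceeds $t^*$ w.h.p.'' step: the paper's proofs of this type of claim (Lemmas~\ref{lem:goodset-largeprob} and~\ref{lem:multilinear-goodset-largeprob}) rely crucially on $X_0\sim p$ so that every intermediate $X_t$ is distributed according to $p$ and one can invoke stationary-start concentration; in Theorem~\ref{thm:hybrid-supplement-induction} everything is conditioned on \emph{fixed} starting states $x_{t_0}^{(1)},x_{t_0}^{(2)}$, so the intermediate marginals are not stationary and this argument is unavailable. Controlling good-set membership along a path from a fixed start is itself a maximal-inequality statement about Doob martingales of the sub-functions, which recursively poses the same difficulty you are trying to solve. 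Second, bounding the increment $\abss{M_{i+1}-M_i}$ of your coupled-pair martingale requires coupling the future of the pair from time $i+1$ against the future of the pair from time $i$, i.e.\ four simultaneous chains; the telescoping then produces hybrid terms whose factors are spread across all four runs, and your induction hypothesis (and the theorem itself) only covers hybrids over two runs. Both obstacles are repairable in principle (nested stopping times, an extension to $k$-run hybrids), but they are precisely the machinery the paper's lighter decomposition-plus-union-bound argument is designed to avoid, and your proposal does not supply them.
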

\begin{proof}
	The proof will proceed by induction on $l$.\\
	
	\noindent\textbf{Base Cases: $l=0,1$:} When $l=0$, the functions under consideration are all just constant functions and hence both the statements hold immediately. Consider the next case $l=1$ as well. In this case the functions are linear and hence no hybrid terms can arise. The statements of the theorem follow immediately from the definition of $G_K^{a,d}(t^*,t_0)$.

	We will assume the statements of the theorem hold for some $1 < l < d-1$. And proceed to show them for $l+1$.
	
	\noindent \textbf{Induction Step for Statement~\ref{stmt:hybrid-cond-expectation}:}
	We will begin with Statement~\ref{stmt:hybrid-cond-expectation}. We wish to show it for $l+1$-linear hybrid functions. At a high level, we will try to express any hybrid function of degree $l+1$ as a non-hybrid function of degree $l+1$ plus functions which resemble hybrid functions of degree $l$ multiplied with the Hamming distance between the two runs at time $t^*$. The definition of the `good' set will allow us to bound the conditional expectation of the non-hybrid function of degree $l+1$. The inductive hypothesis together with Hamming contraction properties will help us bound the other functions. The total number of such functions we will encounter is $\poly(d)$ and hence we incur a constant factor ($\poly(d)$) loss in the final bound.
	Consider any function $f_a^{v_1,v_2,\ldots,v_{d-l-1}}\left( x^{(1:2)}\right)$ from the family $F_a^d(l+1)$ with coefficient vector $\a$. We will show the statement by inducting on $l_2 = l+1-l_1$.
	For a given degree $l+1$ and a certain value of $l_2$ the inductive claim is as follows. For any $K > c(d)(n\log^2 n/\eta)^{(d-1)/2}$,
	\begin{align}
	\abss{\E\left[ \sum_{u_1,u_2,\ldots,u_{l+1}} \a_{u_1 u_2 \ldots u_{l+1}} \prod_{b=1}^{2} \prod_{e=1}^{l_b} X_{t^*,u_{s_b+e}}^{(b)} \middle| X_{t_0}^{(j)} = x_{t_0}^{(j)} \: \text{for } j=1,2 \right]} \le (l_2+1) K^{(l+1)/(d-1)}. \label{eq:he1}
	\end{align}
	As a base case consider the scenario when $l_2 = 0$. Then the function under consideration is a vanilla non-hybrid $l+1$-linear function from $F_a^d(l+1)$ and the statement holds by the definition of $G_K^{a,d}(t_0)$. Suppose the statement holds for some $l_2$. We will show that it holds for any $l+1$-linear hybrid function in $F_a^d(l+1)$ with $l_2+1$ terms coming from the 2nd run. Then the LHS of Statement~\ref{stmt:hybrid-cond-expectation} is of the form,
	\begin{align}
	&\abss{\E\left[ \sum_{u_1,u_2,\ldots,u_{l+1}} \a_{u_1 u_2 \ldots u_{l+1}} \prod_{e=1}^{l_1-1} X_{t^*,u_{e}}^{(1)} \prod_{e=l_1}^{l+1} X_{t^*,u_{e}}^{(2)} \middle| X_{t_0}^{(j)}=x_{t_0}^{(j)} \: \text{for } j=1,2 \right]} \notag\\
	&~ \le  \abss{\E\left[ \sum_{u_1,u_2,\ldots,u_{l+1}} \a_{u_1 u_2 \ldots u_{l+1}} \prod_{e=1}^{l_1-1} X_{t^*,u_{e}}^{(1)} \prod_{e=l_1}^{l} X_{t^*,u_{e}}^{(2)} X_{t^*,u_{l+1}}^{(1)}\middle| X_{t_0}^{(j)}=x_{t_0}^{(j)} \: \text{for } \: 1 \le j \le 2 \right]} \label{eq:he3}\\
	&~~+ \abss{\E\left[ \sum_{u_{l+1}} \left(X_{t^*,u_{l+1}}^{(2)} - X_{t^*,u_{l+1}}^{(1)} \right)\sum_{u_1,u_2,\ldots,u_{l}} \a_{u_1 u_2 \ldots u_l u_{l+1}} \prod_{e=1}^{l_1-1} X_{t^*,u_{e}}^{(1)} \prod_{e=l_1}^{l} X_{t^*,u_{e}}^{(2)} \middle| X_{t_0}^{(j)}=x_{t_0}^{(j)} \: \text{for } j=1,2 \right]}. \label{eq:he4}
	\end{align}
	
	We have, by the inductive hypothesis, Statement~\ref{stmt:hybrid-cond-expectation} for functions in $F_a^d(l+1)$ with $l_2$ terms from the second run that,
	\begin{align}
	(\ref{eq:he3}) \le l_2 K^{(l+1)/(d-1)} . \label{eq:he5}
	\end{align}
	Similarly, from the inductive hypothesis for functions in $F_a^d(l)$, Statements~\ref{stmt:hybrid-cond-expectation}, \ref{stmt:hybrid-cond-concentration} and Corollary~\ref{cor:goodset-layering}, we have
	\begin{align}
	&\abss{\E\left[\sum_{u_1,u_2,\ldots,u_{l}} \a_{u_1 u_2 \ldots u_l u_{l+1}} \prod_{e=1}^{l_1-1} X_{t^*,u_{e}}^{(1)} \prod_{e=l_1}^{l} X_{t^*,u_{e}}^{(2)} \middle| X_{t_0}^{(j)} = x_{t_0}^{(j)} \: \text{for } j=1,2 \right]} \le (l+1)K^{l/(d-1)}, \label{eq:he6}\\
	&\Pr\left[ \left\lvert \sum_{u_1,u_2,\ldots,u_{l}} \a_{u_1 u_2 \ldots u_l u_{l+1}} \prod_{e=1}^{l_1-1} X_{t^*,u_{e}}^{(1)} \prod_{e=l_1}^{l} X_{t^*,u_{e}}^{(2)} - \right. \right. \notag \\
	&~~~\left. \left. \E\left[\sum_{u_1,u_2,\ldots,u_{l}} \a_{u_1 u_2 \ldots u_l u_{l+1}} \prod_{e=1}^{l_1-1} X_{t^*,u_{e}}^{(1)} \prod_{e=l_1}^{l} X_{t^*,u_{e}}^{(2)} \middle| X_i^{(j)} \: \text{for } j=1,2 \right]\right\rvert > K^{l/(d-1)} \middle| X_{t_0}^{(j)} = x_{t_0}^{(j)} \: \text{for } j=1,2\right] \notag \\
	&~~~~~ \le 2\exp\left(-\frac{K^{2/(d-1)}}{c_6(d)t^*}\right). \label{eq:he7}
	\end{align}
	Putting together (\ref{eq:he6}) and (\ref{eq:he7}) with the Hamming contraction properties for any pair of runs in the coupled dynamics and the fact that $(\ref{eq:he4}) \le 2n^{l+1}$ always (similar to how it was shown in Section \ref{sec:bilinear}), we get
	\begin{align}
	(\ref{eq:he4}) \le (l+1)K^{l/(d-1)} + 4n^{l+1}\exp\left(-\frac{K^{2/(d-1)}}{c_6(d)t^*}\right) \le K^{(l+1)/(d-1)} \label{eq:he8}
	\end{align}
	where the last inequality holds because $K > c(d)(n\log^2 n/\eta)^{(d-1)/2}$.
	(\ref{eq:he5}) and (\ref{eq:he8}) together imply the desired bound for the case $l_2+1$. 
	Hence this proves Statement~\ref{stmt:hybrid-cond-expectation} for $l+1$-linear functions.\\

	\noindent \textbf{Induction Step for Statement~\ref{stmt:hybrid-cond-concentration}:}
	Next we look at Statement~\ref{stmt:hybrid-cond-concentration} for $l+1$-linear hybrid functions in $F_a^d(l+1)$. The high level approach is similar to that used above in the proof for Statement~\ref{stmt:hybrid-cond-expectation}. We will try to express any hybrid function of degree $l+1$ as a non-hybrid function of degree $l+1$ plus functions which resemble hybrid functions of degree $l$ multiplied with the Hamming distance between the two runs at time $t^*$. To bound the probability of deviation of the non-hybrid function we use the definition of the `good' set and to bound the probability of deviation of the other functions we appeal to the induction hypothesis and Hamming contraction properties. We incur an additional factor which is $\exp(d)$ in the final bound (recall that we are treating $d$ as fixed).
	Consider any function $f_a^{v_1,v_2,\ldots,v_{d-l-1}}\left( x\right)$ from the family $F_a^d(l+1)$. Denote its coefficient vector by $\a$. That is, $\a_{u_1 u_2 \ldots u_l u_{l+1}} = a_{u_1 u_2 \ldots u_{l+1}v_1 v_2 \ldots v_{d-l-1}}$.
	We again induct on $l_2$, the number of terms in the function corresponding to the second run of the Glauber dynamics.
	Given an $l+1$-linear $f \in F_a^d(l+1)$ with coefficient vector $\a$, the inductive claim for the class of hybrid functions associated with $f$ with a certain value of $l_2$ is as follows:
	\begin{align}
	&\Pr\left[\abss{f\left(X_{t^*}^{(1:2)}\right) - \E\left[ f\left(X_{t^*}^{(1:2)}\right) \middle| X_{t_0}^{(j)} \: \forall \: 1 \le j \le 2 \right]} > K^{(l+1)/(d-1)}\middle| X_{t_0}^{(j)}=x_{t_0}^{(j)} \: \forall \: 1 \le j \le 2 \right] \notag \\
	&~~~~~~\le 2\exp\left(-\frac{K^{2/(d-1)}}{c_8(l+1+l_2)t^*}\right), \label{eq:hcc1}
	\end{align}
	where $c_8(l)$ is an increasing function of $l$.
	As a base case consider the scenario when $l_2 = 0$. Then the function under consideration is a vanilla non-hybrid $l+1$-linear function from $F_a^d(l+1)$ and the statement holds by the definition of $G_K^{a,d}(t_0)$. Suppose the statement holds for some $l_2 >0$. We will show that it holds for any $l+1$-linear function from $F_a^d(l+1)$ with number of terms corresponding to the second run equal to $l_2+1$. Consider the LHS of the statement for any such function:
	\small
	\begin{align}
	&\Pr\left[\left\lvert \sum_{u_1,u_2,\ldots,u_{l+1}} \a_{u_1 u_2 \ldots u_{l+1}} \prod_{e=1}^{l_1-1} X_{t^*,u_{e}}^{(1)} \prod_{e=l_1}^{l+1} X_{t^*,u_{e}}^{(2)} \right. \right. \notag\\
	&~~~~~ \left. \left. - \E\left[ \sum_{u_1,u_2,\ldots,u_{l+1}} \a_{u_1 u_2 \ldots u_{l+1}} \prod_{e=1}^{l_1-1} X_{t^*,u_{e}}^{(1)} \prod_{e=l_1}^{l+1} X_{t^*,u_{e}}^{(2)} \middle| X_{t_0}^{(j)} \: \forall \: 1 \le j \le 2 \right] \right\rvert > K^{(l+1)/(d-1)} \middle| X_{t_0}^{(j)}=x_{t_0}^{(j)} \: \forall \: 1 \le j \le 2 \right] \label{eq:hcc2}\\
	&\le \Pr\left[\left\lvert \sum_{u_1,u_2,\ldots,u_{l+1}} \a_{u_1 u_2 \ldots u_{l+1}} \prod_{e=1}^{l_1-1} X_{t^*,u_{e}}^{(1)} \prod_{e=l_1}^{l} X_{t^*,u_{e}}^{(2)}X_{t^*,u_{l+1}}^{(1)} \right. \right. \notag\\
	&~~~~\left. \left. - \E\left[\sum_{u_1,u_2,\ldots,u_{l+1}} \a_{u_1 u_2 \ldots u_{l+1}} \prod_{e=1}^{l_1-1} X_{t^*,u_{e}}^{(1)} \prod_{e=l_1}^{l} X_{t^*,u_{e}}^{(2)}X_{t^*,u_{l+1}}^{(1)} \middle| X_{t_0}^{(j)} \: \forall \: 1 \le j \le 2\right]\right\rvert \ge K^{(l+1)/(d-1)}/2 \middle| X_{t_0}^{(j)}=x_{t_0}^{(j)} \: \forall \: 1 \le j \le 2\right] \label{eq:hcc3}\\
	&+ \Pr\left[\left\lvert \sum_{u_{l+1}} \left(X_{t^*,u_{l+1}}^{(1)} - X_{t^*,u_{l+1}}^{(2)} \right)\sum_{u_1,u_2,\ldots,u_{l}} \a_{u_1 u_2 \ldots u_l u_{l+1}} \prod_{e=1}^{l_1-1} X_{t^*,u_{e}}^{(1)} \prod_{e=l_1}^{l} X_{t^*,u_{e}}^{(2)} \right. \right. \notag \\
	&\left. \left. - \E\left[\sum_{u_{l+1}} \left(X_{t^*,u_{l+1}}^{(1)} - X_{t^*,u_{l+1}}^{(2)} \right)\sum_{u_1,u_2,\ldots,u_{l}} \a_{u_1 u_2 \ldots u_l u_{l+1}} \prod_{e=1}^{l_1-1} X_{t^*,u_{e}}^{(1)} \prod_{e=l_1}^{l} X_{t^*,u_{e}}^{(2)}\right]\right\rvert > K^{(l+1)/(d-1)}/2 \middle| X_{t_0}^{(j)}=x_{t_0}^{(j)} \: \forall \: 1 \le j \le 2\right] \label{eq:hcc4}
	\end{align}
	\normalsize
	We have, by the inductive hypothesis for $l+1$-linear functions in $F_a^d(l+1)$ with $l_2$ terms from the 1st run that,
	\begin{align}
	(\ref{eq:hcc3}) \le 2\exp\left(-\frac{K^{2/(d-1)}}{c_8(l+1+l_2)t^*}\right)  . \label{eq:hcc5}
	\end{align}
	From the property of the `good' set $G_K^{a,d}(t_0)$ (Corollary \ref{cor:goodset-layering}), and the inductive hypothesis, Statements~\ref{stmt:hybrid-cond-expectation} and \ref{stmt:hybrid-cond-concentration} for functions in $F_a^d(l)$ we get,
	\small
	\begin{align}
	&\abss{\E\left[\sum_{u_1,u_2,\ldots,u_{l}} \a_{u_1 u_2 \ldots u_l u_{l+1}} \prod_{e=1}^{l_1-1} X_{t^*,u_{e}}^{(1)} \prod_{e=l_1}^{l} X_{t^*,u_{e}}^{(2)} \middle| X_{t_0}^{(j)}=x_{t_0}^{(j)} \: \forall \: 1 \le j \le 2\right]} \le (l+1)K^{l/(d-1)} \label{eq:hcc6}\\
	& \Pr\left[ \left\lvert \sum_{u_1,u_2,\ldots,u_{l}} \a_{u_1 u_2 \ldots u_l u_{l+1}} \prod_{e=1}^{l_1-1} X_{t^*,u_{e}}^{(1)} \prod_{e=l_1}^{l} X_{t^*,u_{e}}^{(2)} \right. \right. \notag\\
	&\left. \left. - \E\left[\sum_{u_1,u_2,\ldots,u_{l}} \a_{u_1 u_2 \ldots u_l u_{l+1}} \prod_{e=1}^{l_1-1} X_{t^*,u_{e}}^{(1)} \prod_{e=l_1}^{l} X_{t^*,u_{e}}^{(2)} \middle| X_{t_0}^{(j)} \: \forall \: 1 \le j \le 2\right]\right\rvert > K^{l/(d-1)}/2 \middle| X_{t_0}^{(j)}=x_{t_0}^{(j)} \: \forall \: 1 \le j \le 2  \right] \notag \\
	&~~~~~~~\le 2\exp\left(-\frac{K^{2/(d-1)}}{ 4^{1/l}c_8(l+l_2)t^*}\right) . \label{eq:hcc7}
	\end{align}
	\normalsize
	When $K > c(d)(n\log^2 n/\eta)^{(d-1)/2}$, (\ref{eq:hcc6}) and (\ref{eq:hcc7}) together with the Hamming contraction property of the coupled dynamics (Lemma \ref{lem:greedy-properties}) imply that
	\begin{align}
	&\E\left[\sum_{u_{l+1}} \left(X_{t^*,u_{l+1}}^{(1)} - X_{t^*,u_{l+1}}^{(2)} \right)\sum_{u_1,u_2,\ldots,u_{l}} \a_{u_1 u_2 \ldots u_{l+1}} \prod_{e=1}^{l_1-1} X_{t^*,u_{e}}^{(1)} \prod_{e=l_1}^{l} X_{t^*,u_{e}}^{(2)} \middle| X_{t_0}^{(j)}=x_{t_0}^{(j)} \: \forall \: 1 \le j \le 2\right] \le (l+1)K^{l/d} \label{eq:hcc8}\\
	&\implies (\ref{eq:hcc4}) \le \notag\\
	&\Pr\left[\left\lvert \sum_{u_{l+1}} \left(X_{t^*,u_{l+1}}^{(1)} - X_{t^*,u_{l+1}}^{(2)} \right)\sum_{u_1,u_2,\ldots,u_{l}} \a_{u_1 u_2 \ldots u_{l+1}} \prod_{e=1}^{l_1-1} X_{t^*,u_{e}}^{(1)} \prod_{e=l_1}^{l} X_{t^*,u_{e}}^{(2)} \right\rvert > K^{(l+1)/(d-1)}/4 \middle| X_{t_0}^{(j)}=x_{t_0}^{(j)} \: \forall \: 1 \le j \le 2\right]. \label{eq:hcc9}
	\end{align}
	Note this follows since $(l+1)K^{l/d} \leq K^{(l+1)/(d-1)}/2$ when $K > c(d)(n\log^2 n/\eta)^{(d-1)/2}$ and $c(d)$ is sufficiently large.
	
	From Lemma \ref{lem:hamming-concentration} we have, for any $K_1 > 2$,
	\begin{align}
	&\Pr\left[ \abss{\sum_{u_{l+1}} \left(X_{t^*,u_{l+1}}^{(2)} - X_{t^*,u_{l+1}}^{(1)}  \right) } > K_1 \middle| X_{t_0}^{(j)}=x_{t_0}^{(j)} \: \forall \: 1 \le j \le 2\right] \\
	&\le \Pr\left[\dh(X_{t^*}^{(2)}, X_{t^*}^{(1)}) > K_1/2\middle| X_{t_0}^{(j)}=x_{t_0}^{(j)} \: \forall \: 1 \le j \le 2\right] \\
	&\le \Pr\left[\abss{\dh(X_{t^*}^{(2)}, X_{t^*}^{(1)}) - \E\left[\dh(X_{t^*}^{(2)}, X_{t^*}^{(1)}) \middle| X_{t_0}^{(j)}=x_{t_0}^{(j)} \: \forall \: 1 \le j \le 2 \right]} > K_1/2-1\middle| X_{t_0}^{(j)}=x_{t_0}^{(j)} \: \forall \: 1 \le j \le 2\right] \notag\\
	&~~\le 2\exp\left(-\frac{(K_1-2)^2}{64(t^*-t_0)} \right) \le 2\exp\left(-\frac{K_1^2}{70t^*} \right)\label{eq:hcc10}
	\end{align}
	where (\ref{eq:hcc10}) follows because $t^*-t_0 \le t^*$, and $\E[\dh(X_{t^*}^{(1)}, X_{t^*}^{(2)}) | X_{t_0}^{(j)}=x_{t_0}^{(j)} \: \forall \: 1 \le j \le 2] \le 1$. Now, set $K_1 = K^{1/(d-1)}/2$.
	Applying (\ref{eq:hcc10}) for these parameter values, we get,
	\begin{align}
	&(\ref{eq:hcc9}) \le \Pr\left[\abss{\sum_{u_{l+1}} \left(X_{t^*,u_{l+1}}^{(1)} - X_{t^*,u_{l+1}}^{(2)} \right)} > K^{1/(d-1)}/2 \middle| X_{t_0}^{(j)}=x_{t_0}^{(j)} \: \forall \: 1 \le j \le 2\right] \\
	&~~~~+  \Pr\left[\exists u_{l+1} \: \text{s.t. }\abss{\sum_{u_1,u_2,\ldots,u_{l}} \a_{u_1 u_2 \ldots u_{l+1}} \prod_{e=1}^{l_1-1} X_{t^*,u_{e}}^{(1)} \prod_{e=l_1}^{l} X_{t^*,u_{e}}^{(2)}} > K^{l/(d-1)}/2 \middle| X_{t_0}^{(j)}=x_{t_0}^{(j)} \: \forall \: 1 \le j \le 2\right]\\
	&\le 2\exp\left(-\frac{K^{2/(d-1)}}{280t^*} \right) + n\exp\left(-\frac{K^{2/(d-1)}}{c_8(l+l_2)t^*}\right) \le (n+1)\exp\left(-\frac{K^{2/(d-1)}}{c_8(l+l_2)t^*}\right). \label{eq:hcc12}
	\end{align}
	(\ref{eq:hcc12}) with (\ref{eq:hcc5}) implies
	\begin{align}
	(\ref{eq:hcc2}) \le (n+1)\exp\left(-\frac{K^{2/(d-1)}}{c_8(l+l_2)t^*}\right) + 2\exp\left(-\frac{K^{2/(d-1)}}{c_8(l+1+l_2)t^*}\right) \le 2\exp\left(-\frac{K^{2/(d-1)}}{c_8(l+2+l_2)t^*}\right),
	\end{align}
	for sufficiently large $c_8(.)$.
	This gives the desired bound for $l+1$-linear functions in $F_a^d(l+1)$ with $l_2+1$ terms from the second run. By induction, this proves Statement~\ref{stmt:hybrid-cond-concentration} for all $1 \le l \le d-1$ and all functions from $F_a^d(l)$.
	
\end{proof}

	\section{Experiments}
\label{sec:experiments}
In this section, we apply our family of bilinear statistics on the Ising model to a problem of statistical hypothesis testing.
Given a single sample from a multivariate distribution, we attempt to determine whether or not this sample was generated from an Ising model in the high-temperature regime.
More specifically, the null hypothesis is that the sample is drawn from an Ising model with a known graph structure with a common edge parameter and a uniform node parameter (which may potentially be known to be 0). 
In Section~\ref{sec:synthetic}, we apply our statistics to synthetic data.
In Section~\ref{sec:music}, we turn our attention to the Last.fm dataset from HetRec 2011 \cite{CantadorBK11}.

The running theme of our experimental investigation is testing the classical and common assumption which models choices in social networks as an Ising model~\cite{Ellison93, MontanariS10}.
To be more concrete, choices in a network could include whether to buy an iPhone or an Android phone, or whether to vote for a Republican or Democratic candidate.
Such choices are naturally influenced by one's neighbors in the network -- one may be more likely to buy an iPhone if he sees all his friends have one, corresponding to an Ising model with positive-weight edges\footnote{Note that one may also decide \emph{against} buying an iPhone in this scenario, if one places high value on individuality and uniqueness -- this corresponds to negative-weight edges.}
In our synthetic data study, we will leave these choices as abstract, referring to them only as ``values,'' but in our Last.fm data study, these choices will be whether or not one listens to a particular artist.

Our general algorithmic approach is as follows.
Given a single multivariate sample, we first run the maximum pseudo-likelihood estimator (MPLE) to obtain an estimate of the model's parameters.
The MPLE is a canonical estimator for the parameters of the Ising model, and it enjoys strong consistency guarantees in many settings of interest~\cite{Chatterjee07, BhattacharyaM16}.
If the MPLE gives a large estimate of the model's edge parameter, this is sufficient evidence to reject the null hypothesis.
Otherwise, we use Markov Chain Monte Carlo (MCMC) on a model with the MPLE parameters to determine a range of values for our statistic.
We note that, to be precise, we would need to quantify the error incurred by the MPLE -- in favor of simplicity in our exploratory investigation, we eschew this detail, and at this point attempt to reject the null hypothesis of the model learned by the MPLE.
Our statistic is bilinear in the Ising model, and thus enjoys the strong concentration properties explained earlier in this paper.
Note that since the Ising model will be in the high-temperature regime, the Glauber dynamics mix rapidly, and we can efficiently sample from the model using MCMC.
Finally, given the range of values for the statistic determined by MCMC, we reject the null hypothesis if $p \leq 0.05$.

\subsection{Synthetic Data}
\label{sec:synthetic}
We proceed with our investigation on synthetic data.
Our null hypothesis is that the sample is generated from an Ising model in the high temperature regime on the grid, with no external field (i.e. $\theta_u = 0$ for all $u$) and a common (unknown) edge parameter $\theta$ (i.e., $\theta_{uv} = \theta$ iff nodes $u$ and $v$ are adjacent in the grid, and $0$ otherwise).
For the Ising model on the grid, the critical edge parameter for high-temperature is $\theta_c = \frac{\ln(1 + \sqrt{2})}{2}$.
In other words, we are in high-temperature if and only if $\theta \leq \theta_c$, and we can reject the null hypothesis if the MPLE estimate $\hat \theta > \theta_c$.

To generate departures from the null hypothesis, we give a construction parameterized by $\tau \in [0, 1]$.
We provide a rough description of the departures, for a precise description, see the supplemental material.
Each node $x$ selects a random node $y$ at Manhattan distance at most $2$, and sets $y$'s value to $x$ with probability $\tau$.
The intuition behind this construction is that each individual selects a friend or a friend-of-a-friend, and tries to convince them to take his value -- he is successful with probability $\tau$.
Selecting either a friend or a friend-of-a-friend is in line with the concept of strong triadic closure~\cite{EasleyK10} from the social sciences, which suggests that two individuals with a mutual friend are likely to either already be friends (which the social network may not have knowledge of) or become friends in the future.

An example of a sample generated from this distribution with $\tau = 0.04$ is provided in Figure \ref{fig:models}, alongside a sample from the Ising model generated with the corresponding MPLE parameters.
We consider this distribution to pass the ``eye test'' -- one can not easily distinguish these two distributions by simply glancing at them.
However, as we will see, our multilinear statistic is able to correctly reject the null a large fraction of the time.

\begin{figure}
    \centering
    \begin{subfigure}[b]{0.3\textwidth}
        \includegraphics[width=\textwidth]{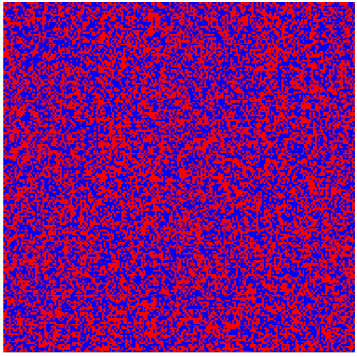}
        \caption{Our deviation from the null with $\tau = 0.04$}
    \end{subfigure}
~~~~~~~~~~~~~~~~~~~~~~~~~~~~~~
    \begin{subfigure}[b]{0.3\textwidth}
        \includegraphics[width=\textwidth]{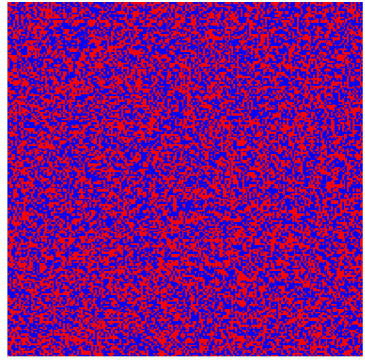}
        \caption{A sample from the Ising model with $\theta = 0.035$}
    \end{subfigure}
    \caption{A visual comparison between the null and a deviation from the null}\label{fig:models}
\end{figure}

Our experimental process was as follows.
We started with a $40 \times 40$ grid, corresponding to a distribution with $n = 1600$ dimensions.
We generated values for this grid according to the depatures from the null described above, with some parameter $\tau$.
We then ran the MPLE estimator to obtain an estimate for the edge parameter $\hat \theta$, immediately rejecting the null if $\hat \theta > \theta_c$.
Otherwise, we ran the Glauber dynamics for $O(n \log n)$ steps to generate a sample from the grid Ising model with parameter $\hat \theta$.
We repeated this process to generate $100$ samples, and for each sample, computed the value of the statistic
$Z_{local} = \sum_{u = (i,j)} \sum_{v = (k,l) : d(u,v) \leq 2} X_u X_v,$
where $d(\cdot, \cdot)$ is the Manhattan distance on the grid.
This statistic can be justified since we wish to account for the possibility of connections between friends-of-friends of which the social network may be lacking knowledge.
We then compare with the value of the statistic $Z_{local}$ on the provided sample, and reject the null hypothesis if this statistic corresponds to a $p$-value of $\leq 0.05$.
We repeat this for a wide range of values of $\tau \in [0, 1]$, and repeat $500$ times for each $\tau$.

Our results are displayed in Figure~\ref{fig:synthetic}
The x-axis marks the value of parameter $\tau$, and the y-axis indicates the fraction of repetitions in which we successfully rejected the null hypothesis.
The performance of the MPLE alone is indicated by the orange line, while the performance of our statistic is indicated by the blue line.
We find that our statistic is able to correctly reject the null at a much earlier point than the MPLE alone.
In particular, our statistic manages to reject the null for $\tau \geq 0.04$, while the MPLE requires a parameter which is an order of magnitude larger, at $0.4$.
As mentioned before, in the former regime (when $\tau \approx 0.04$), it appears impossible to distinguish the distribution from a sample from the Ising model with the naked eye. 

\begin{figure}
  \centering
    \includegraphics[width=0.5\textwidth]{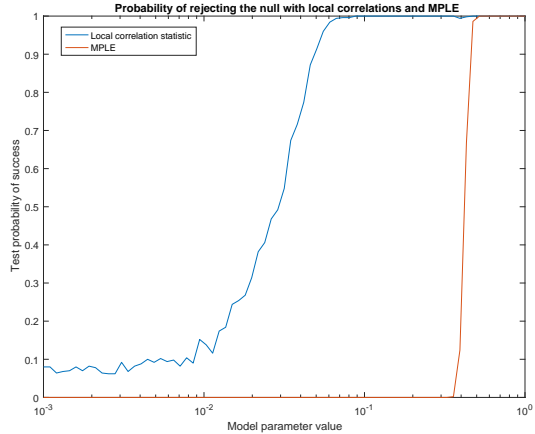}
  \caption{Power of our statistic on synthetic data.}
\label{fig:synthetic}
\end{figure}

\subsection{Last.fm Dataset}
\label{sec:music}
We now turn our focus to the Last.fm dataset from HetRec'11 \cite{CantadorBK11}.
This dataset consists of data from $n = 1892$ users on the Last.fm online music system.
On Last.fm, users can indicate (bi-directional) friend relationships, thus constructing a social network -- our dataset has $m = 12717$ such edges.
The dataset also contains users' listening habits -- for each user we have a list of their fifty favorite artists, whose tracks they have listened to the most times.
We wish to test whether users' preference for a particular artist is distributed according to a high-temperature Ising model.

Fixing some artist $a$ of interest, we consider the vector $X^{(a)}$, where $X_u^{(a)}$ is $+1$ if user $u$ has artist $a$ in his favorite artists, and $-1$ otherwise.
We wish to test the null hypothesis, whether $X^{(a)}$ is distributed according to an Ising model in the high temperature regime on the known social network graph, with common (unknown) external field $h$ (i.e. $\theta_u = h$ for all $u$) and edge parameter $\theta$ (i.e., $\theta_{uv} = \theta$ iff $u$ and $v$ are neighbors in the graph, and $0$ otherwise).

Our overall experimental process was very similar to the synthetic data case.
We gathered a list of the ten most-common favorite artists, and repeated the following process for each artist $a$.
We consider the vector $X^{(a)}$ (defined above) and run the MPLE estimator on it, obtaining estimates $\hat h$ and $\hat \theta$.
We then run MCMC to generate $100$ samples from the Ising model with these parameters, and for each sample, computed the value of the statistics
$Z_{k} = \sum_u \sum_{v : d(u,v) \leq k} (X_u - \tanh(\hat h)) (X_v - \tanh(\hat h)),$
where $d(\cdot, \cdot)$ is the distance on the graph, and $k = 1$ (the neighbor correlation statistic) or $2$ (the local correlation statistic).
Motivated by our theoretical results (Theorem~\ref{thm:bilinear-concentration-external}), we consider a statistic where the variables are recentered by their marginal expectations, as this statistic experiences sharper concentration.
We again consider $k = 2$ to account for the possibility of edges which are unknown to the social network.

Strikingly, we found that the plausibility of the Ising modelling assumption varies significantly depending on the artist.
We highlight some of our more interesting findings here, see the supplemental material for more details.
The most popular artist in the dataset was Lady Gaga, who was a favorite artist of $611$ users in the dataset.
We found that $X^{(\rm Lady\ Gaga)}$ had statistics $Z_1 = 9017.3$ and $Z_2 = 106540$.
The range of these statistics computed by MCMC can be seen in Figure~\ref{fig:gaga} -- clearly, the computed statistics fall far outside these ranges, and we can reject the null hypothesis with $p \ll 0.01$.
Similar results held for other popular pop musicians, including Britney Spears, Christina Aguilera, Rihanna, and Katy Perry.

\begin{figure}
    \centering
    \begin{subfigure}[b]{0.45\textwidth}
        \includegraphics[width=\textwidth]{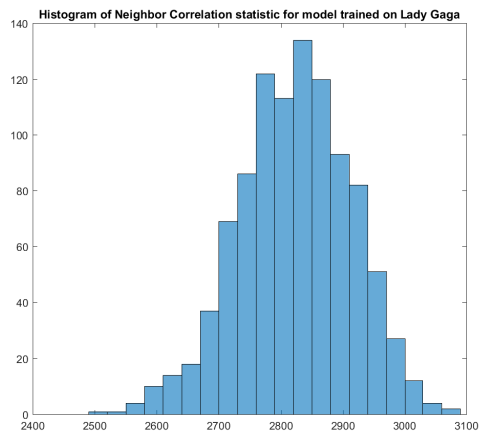}
        \caption{MCMC $Z_1$ for Lady Gaga}
    \end{subfigure}
~~~~~~~~~~
    \begin{subfigure}[b]{0.45\textwidth}
        \includegraphics[width=\textwidth]{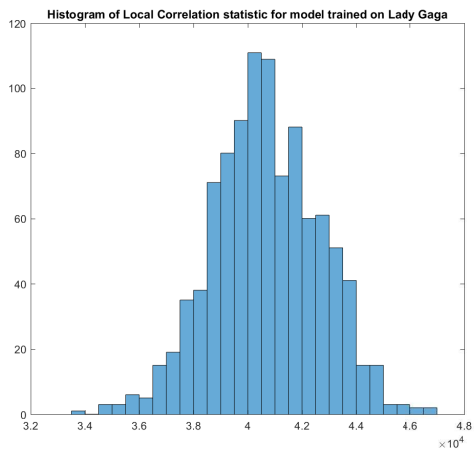}
        \caption{MCMC $Z_2$ for Lady Gaga}
    \end{subfigure}
    \caption{MCMC Statistics for Lady Gaga}\label{fig:gaga}
\end{figure}

However, we observed qualitatively different results for The Beatles, the fourth most popular artist, being a favorite of $480$ users.
We found that $X^{(\rm The\ Beatles)}$ had statistics $Z_1 = 2157.8$ and $Z_2 = 22196$.
The range of these statistics computed by MCMC can be seen in Figure~\ref{fig:beatles} of the supplementary material.
This time, the computed statistics fall near the center of this range, and we can not reject the null.
Similar results held for the rock band Muse.

\begin{figure}
    \centering
    \begin{subfigure}[b]{0.45\textwidth}
        \includegraphics[width=\textwidth]{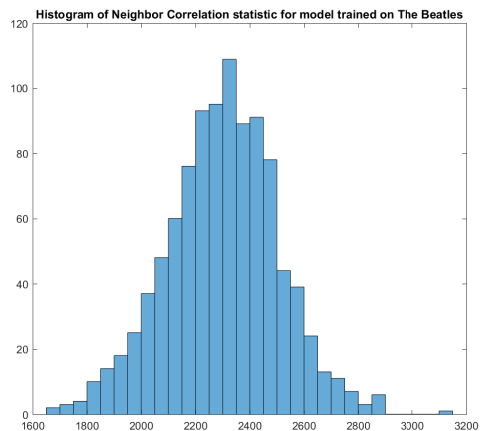}
        \caption{MCMC $Z_1$ for The Beatles}
    \end{subfigure}
~~~~~~~~~~
    \begin{subfigure}[b]{0.45\textwidth}
        \includegraphics[width=\textwidth]{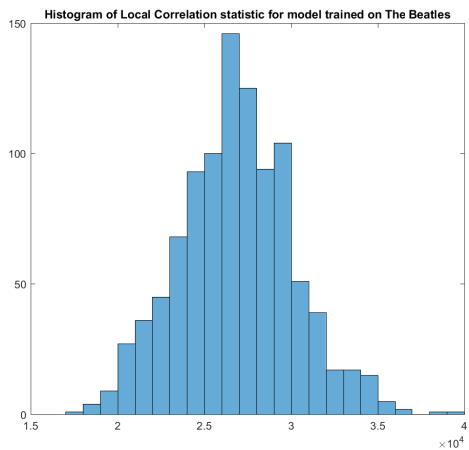}
        \caption{MCMC $Z_2$ for The Beatles}
    \end{subfigure}
    \caption{MCMC Statistics for The Beatles}\label{fig:beatles}
\end{figure}

Based on our investigation, our statistic seems to indicate that for the pop artists, the null fails to effectively model the distribution, while it performs much better for the rock artists.
We conjecture that this may be due to the highly divisive popularity of pop artists like Lady Gaga and Britney Spears -- while some users may love these artists (and may form dense cliques within the graph), others have little to no interest in their music.
The null would have to be expanded to accomodate heterogeneity to model such effects.
On the other hand, rock bands like The Beatles and Muse seem to be much more uniform in their appeal: users seem to be much more homogeneous when it comes to preference for these groups.

	\bibliographystyle{alpha}
	\bibliography{biblio}
	\appendix
	\section{Additional details about Experiments}
\subsection{Details about synthetic experiments}
Our departures from the null hypothesis are generated in the following manner, parameterized by some parameter $\tau \in [0, 1]$.
The grid is initialized by setting each node independently to be $-1$ or $1$ with equal probability.
We then iterate over the nodes in column major order.
For the node $x$ at position $(i,j)$, we select a node $y$ at one of the following positions uniformly at random: $(i, j+1), (i, j+2),  (i+1, j+1), (i+1, j), (i+2, j), (i+1, j-1)$.
Then, with probability $\tau$, we set $y$ to have the same value as $x$.
We imagine this construction as a type of social network model, where each individual tries to convert one of his nearby connections in the network to match his signal, and is successful with probability $\tau$.

\subsection{Details about experiments on Last.fm dataset}
We report additional statistics extracted from the Last.fm dataset \cite{CantadorBK11}.

The network has $n = 1892$ and $17632$ artists.
There are $m = 12717$ edges, with an average degree of $13.443$.
There are $92834$ user-listened artist relations, where the artists listed for a user is truncated at 50.
On average, $5.265$ users listened to each artist, but we focus on artists who had significatly more listens ($\sim 400$ or more).

\begin{tabular}{|c |c |c| c| c| c| c| c| c|}
\hline
Artist & \# of favorites & MPLE $h$ & MPLE $\theta$ & $Z_1$ & $Z_2$ & Reject $Z_1$? & Reject $Z_2$? \\ \hline
Lady Gaga & 611 & $-0.481$ & $0.0700$ & $9017.3$ & $106540$ & Yes & Yes \\ \hline
Britney Spears & 522 & $-0.6140$ & $0.0960$ & $10585$ & $119560$ & Yes & Yes \\ \hline
Rihanna & 484 & $-0.715$ & $0.1090$ & $11831$ & $126750$ & Yes & Yes \\ \hline
The Beatles & 480 & $-0.3550$ & $0.0310$ & $2157.8$ & $22196$ & No & No \\ \hline
Katy Perry & 473 & $-0.6150$ & $0.0890$ & $8474$ & $90762$ & Yes & Yes \\ \hline
Madonna & 429 & $-0.5400$ & $0.0860$ & $4580.9$ & $40395$ & Yes & No \\ \hline
Avril Lavigne & 417 & $-0.5580$ & $0.1020$ & $5145.9$ & $48639$ & Yes & Yes \\ \hline
Christina Aguilera & 407 & $-0.7810$ & $0.1060$ & $9979.8$ & $101210$ & Yes & Yes \\ \hline
Muse & 400 & $-0.5430$ & $0.0160$ & $923.55$ & $6911$ & No & No \\ \hline
Paramore & 399 & $-0.4530$ & $0.0480$ & $2047.1$ & $18119$ & Yes & Yes \\ \hline
\end{tabular}

\end{document}